\newtheorem{thm}{Theorem}[section]
\newtheorem{lem}[thm]{Lemma}
\newtheorem{lem-defin}[thm]{Lemma-Definition}
\newtheorem{prop}[thm]{Proposition}
\newtheorem{theoremalph}{Theorem}
\newtheorem*{propC}{Proposition C}
\theoremstyle{definition}
\newtheorem{rem}[thm]{Remark}
\newtheorem{question}[thm]{Question}
\newtheorem{defin}[thm]{Definition}
\newcommand{\Z}{\mathbb{Z}}
\newcommand{\Fix}{\mathrm{Fix}}
\newcommand{\bbN}{{\mathbb N}}
\newcommand{\cG}{{\mathcal G}}
\newcommand{\cK}{{\mathcal K}}
\newcommand{\cT}{{\mathcal T}}
\newcommand{\link}{\mathrm{Lk}}
\newcommand{\Axis}{\mathrm{Axis}}
\def\paragraph{\@startsection{paragraph}{4}%
  \z@\z@{-\fontdimen2\font}%
  {\normalfont\bfseries}}
\begin{document}

\title[Tits alternative for two-dimensional Artin groups]{ The Tits alternative for  two-dimensional Artin groups and Wise's power alternative}

\author{Alexandre Martin}

\maketitle

\begin{abstract}
We show that two-dimensional Artin groups satisfy a strengthening of the Tits alternative: their subgroups  either contain a non-abelian free group or are virtually  free abelian of rank at most $2$.

When in addition the associated Coxeter group is hyperbolic, we  answer in the affirmative a question of Wise on the subgroups generated by large powers of two elements: given any two  elements $a, b$ of a two-dimensional Artin group of hyperbolic type, there exists an integer $n\geq 1$ such that  $a^n$ and $b^n$ either commute or generate a non-abelian free subgroup. 
\end{abstract}

\tableofcontents

\section{Introduction}

\paragraph{Motivation and statement of results.} The Tits Alternative was first introduced by Tits in \cite{TitsAlternative}, where he proved that a finitely generated subgroup of a linear group over a field either contains a free group or is virtually solvable. This striking dichotomy has since been established for a large class of groups. A meta-conjecture asserts that groups that are non-positively curved in a suitable sense satisfy the Tits Alternative. While this has been proved for certain classes of groups (CAT(0) cubical groups \cite{SW, CS}, hierarchically hyperbolic groups \cite{TitsHHG, TitsHHGcorr}, etc.) it remains open in general, and even for CAT(0) groups despite some recent progress~\cite{OP}. 

In this article, we will focus on Artin (or Artin-Tits) groups, a large class of groups introduced by Tits in \cite{TitsArtin}. These groups generalise braid groups  and have strong connections with Coxeter groups. While the geometry and structure of Coxeter groups are now well-understood, the geometry of Artin groups remains elusive in general. It is conjectured that these groups are CAT(0), although this is only known in very few cases (see for instance \cite[Conjecture~A]{HaettelXXL} and the discussion thereafter for a list of partial results). Nonetheless, certain classes of Artin groups have been shown to be non-positively curved in a more general sense, see for instance \cite{HO1, HO2}. 

 In light of the above, it is natural to ask whether all Artin groups satisfy the Tits Alternative. This is known already for certain classes of Artin groups, including Artin groups of spherical type \cite{CW, Digne}, Artin groups of FC type \cite{MP1}, and two-dimensional Artin groups of hyperbolic type \cite{MP2}. The first goal of this paper is to generalise the latter result to all two-dimensional Artin groups:

\begin{theoremalph}\label{thm:Tits_dim2}
Let $A_\Gamma$ be a two-dimensional Artin group. Then every subgroup of $A_\Gamma$ either contains a non-abelian free group or  is virtually free abelian of rank at most $2$. 
\end{theoremalph}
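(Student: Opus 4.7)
The plan is to extend the strategy of \cite{MP2} from the hyperbolic case by working with a CAT(0) 2-complex on which $A_\Gamma$ acts with well-controlled stabilizers---most naturally the modified Deligne complex of Charney--Davis, whose vertex stabilizers are conjugates of standard parabolic subgroups $A_T$ with $|T| \leq 2$ (so either trivial, infinite cyclic, or dihedral Artin). Since the strong Tits alternative is already known for dihedral Artin groups---their subgroups are trivial, virtually cyclic, virtually $\mathbb{Z}^2$, or contain a non-abelian free group---the main work will be to handle subgroups that do not fix a point of this complex.

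I would split subgroups $H \leq A_\Gamma$ into three cases based on their action on the Deligne complex. First, if $H$ has a global fixed point, it is conjugate into a dihedral Artin parabolic and the result follows from the known case. Second, if $H$ has no global fixed point but stabilizes a flat Euclidean plane or a geodesic line, then its action factors through a discrete crystallographic or infinite dihedral group, yielding a virtually $\mathbb{Z}^2$ or virtually cyclic quotient whose kernel lies in the pointwise stabilizer of a positive-dimensional subcomplex; two-dimensionality of the complex then bounds this kernel severely. Third, if $H$ contains two hyperbolic isometries whose axes have disjoint endpoints at infinity, a standard ping-pong argument applied to sufficiently high powers produces a non-abelian free subgroup.

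The heart of the proof will be the flat-stabilizer case, which is the genuinely new ingredient compared to \cite{MP2}: in the hyperbolic Coxeter regime there are no flats, so every non-elliptic subgroup can be handled directly by ping-pong. Here the Euclidean triangle subgroups---of types $(3,3,3)$, $(2,4,4)$, and $(2,3,6)$---produce authentic flat planes in the Deligne complex, and I would need to identify precisely which subgroups of $A_\Gamma$ can stabilize such a flat and verify that each such stabilizer is virtually $\mathbb{Z}^2$. Two-dimensionality rules out $\mathbb{Z}^n$ for $n \geq 3$ via the flat torus theorem, so the main technical difficulty is upgrading ``stabilizes a flat'' to ``virtually free abelian of rank at most $2$'' by exploiting the local combinatorial geometry of the Deligne complex at its Euclidean vertices and excluding the emergence of exotic solvable or amenable intermediate subgroups. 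A related subtlety is to ensure that in the trichotomy above, the second case genuinely captures all non-elliptic virtually abelian possibilities, so that the ``no free subgroup'' hypothesis forces one into cases (i) or (ii) and never into (iii).
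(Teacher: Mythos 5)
Your trichotomy is not exhaustive, and the case it omits is precisely the heart of the paper. A subgroup $H$ with no global fixed point need not stabilise a flat or a line, and need not contain two loxodromic isometries with disjoint limit sets: the problematic situation is a subgroup containing elliptic elements whose fixed-point sets are \emph{disjoint} (for instance a purely elliptic subgroup with no common fixed vertex, or a mixed subgroup). For a general CAT(0) space it is an open question whether two elliptic isometries with disjoint stable fixed-point sets even generate a group containing a loxodromic element (the paper records this as a Question in the introduction), so you cannot pass from ``no global fixed point'' to ``contains hyperbolics to play ping-pong with'' for free. The paper's main technical result, Proposition~C, resolves exactly this: given elliptic $a,b$ with disjoint fixed-point sets, a geodesic $\gamma$ between $\mathrm{Fix}(a)$ and $\mathrm{Fix}(b)$ is shown, after raising to a suitable power $n$, to make angle at least $\pi$ with its $\langle a^n\rangle$- and $\langle b^n\rangle$-translates at the branch points, so the translates of $\gamma$ assemble into a convex tree on which $\langle a^n,b^n\rangle$ acts freely. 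The angle estimates require a genuinely delicate analysis of links of type~$2$ vertices via Garside normal forms in dihedral Artin groups; nothing in your outline substitutes for this. You also need the classification of elliptic elements (vertex-elliptic versus tree-elliptic, with fixed set a single type~$2$ vertex or a standard tree) to reduce an arbitrary non-elliptic, non-tree-stabilising subgroup to either the ``two disjoint elliptics'' case or the purely loxodromic case.

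Conversely, the flat-stabiliser analysis you identify as the main new difficulty is not how the paper proceeds: purely loxodromic subgroups are dispatched by citing the Tits alternative for groups acting on two-dimensional CAT(0) complexes due to Osajda--Przytycki, so no classification of flats in the Deligne complex is needed. Your case (ii) as written also has internal problems: the action of $A_\Gamma$ on $D_\Gamma$ is far from proper (type~$1$ and type~$2$ vertex stabilisers are infinite), so the action of a flat-stabilising subgroup on the flat need not factor through a discrete crystallographic group, and the claimed severe bound on the kernel does not follow merely from two-dimensionality. Finally, note that the remaining cases in the paper's decomposition (subgroups contained in a parabolic, and subgroups stabilising a standard tree) are handled not just via dihedral parabolics but via the structure of centralisers of generators as $\mathbb{Z}\times F_n$; your reduction to dihedral parabolics covers only the global-fixed-point case.
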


When proving the Tits Alternative, one is led to construct free subgroups of the group under study, which is challenging even for CAT(0) groups.   However, the situation sometimes becomes much more manageable when looking at the subgroups generated by \textit{suitably large powers}. In the case of a hyperbolic group for instance,  it is a standard result that given two infinite order elements $a, b$ with disjoint limit sets in the Gromov boundary of the group, there exists a positive integer $n$ such that $a^n$ and $b^n$ generate a free group. 
 
For groups that are non-positively curved in a more general sense (including CAT(0) groups, biautomatic groups, etc.), Wise asked in \cite[Question 2.7]{BestvinaQuestions} whether a similar ``taming'' phenomenon occurs, namely: given any two elements $a, b$, does there exists a positive integer $n$ such that $a^n$ and $b^n$ either generate a free group or a free abelian group? We will say that a group satisfy \textbf{Wise's Power Alternative} when such a result holds. Leary--Minasyan's recent example of a CAT(0) that is not biautomatic \cite{LearyMinasyan} also provides the first example of a non-positively curved group that does not satisfy this alternative. It is thus natural to ask the following:

\begin{question}
	Which (non-positively curved) groups satisfy Wise's Power Alternative?
	\end{question}

There are already a few known groups that satisfy this alternative. For instance, Baudisch showed \cite{Baudisch} that right-angled Artin groups satisfy an even stronger statement: two elements in a right-angled Artin group either commute or generate a free group. In particular, Wise's Power Alternative is satisfied by all groups that virtually embed in a right-angled Artin group, such as Coxeter groups \cite{HaglundWise}. In another direction, Koberda proved that  two arbitrary elements in a non-exceptional  mapping class group admit powers that are contained in a right-angled Artin subgroup \cite{Koberda}. In particular, Wise's Power Alternative also holds for non-exceptional mapping class groups. 

Since we believe Artin groups to be non-positively curved in general, we ask the following: 

\begin{question}
	Which Artin groups satisfy Wise's Power Alternative?
	\end{question}

The second main result of this article is the following:

\begin{theoremalph}\label{thm:power_subgroup}
	Two-dimensional Artin groups of hyperbolic type satisfy Wise's Power Alternative.
\end{theoremalph}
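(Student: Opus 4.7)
The plan is to start from the Tits alternative for two-dimensional Artin groups (Theorem~A) and to upgrade the free-group conclusion into a statement about specific powers. Given $a, b \in A_\Gamma$, Theorem~A says that $\langle a, b \rangle$ either contains a non-abelian free subgroup or is virtually free abelian of rank at most $2$. In the second case, a common power $a^n, b^n$ commutes and we are done. The work is thus to show that in the first case some explicit pair of powers generates a free subgroup.

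For this I would use the action of $A_\Gamma$ on the $\delta$-hyperbolic $2$-complex $X$ (for example the modified Deligne complex, or its coned-off variant) underpinning the proof of the Tits alternative of Martin--Przytycki in the hyperbolic-type setting. Since the Coxeter group is hyperbolic, this action is cocompact and acylindrical, with vertex stabilisers that are parabolic subgroups of rank at most $2$: trivial, infinite cyclic, or dihedral Artin groups. Each element of $A_\Gamma$ acts on $X$ either elliptically (fixing a bounded set, hence conjugate into a rank-$\leq 2$ parabolic) or loxodromically (with an axis and two fixed points in $\partial X$). I would then split the argument into three cases. When both $a, b$ are loxodromic, standard hyperbolic dynamics apply: either their pairs of fixed points in $\partial X$ coincide and both elements lie in the virtually cyclic stabiliser of that pair (by acylindricity), so a common power commutes; or the pairs are disjoint, and high powers $a^n, b^n$ exhibit North--South dynamics and generate a free subgroup by ping-pong on $\partial X$. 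When exactly one of them, say $a$, is elliptic with bounded fixed-point set $F_a$ and $b$ is loxodromic with axis $\ell_b$, I would pass to a power of $b$ whose translation length dominates $\mathrm{diam}(F_a)$: either $a$ preserves $\ell_b$ set-wise and therefore lies in its virtually cyclic stabiliser (so suitable powers commute), or ping-pong between $a$-translates and $b^n$-translates of a neighbourhood of $F_a$ produces a free subgroup.

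The main obstacle is the last case, where both $a$ and $b$ are elliptic. If their fixed-point sets meet at a common vertex, both lie in a common rank-$\leq 2$ parabolic subgroup; the conclusion is immediate for cyclic stabilisers, and for dihedral Artin stabilisers the Power Alternative follows from their structure as central $\Z$-extensions of virtually free hyperbolic groups (the centre intersects the commutator subgroup trivially, so suitable powers of non-commuting pairs actually generate a free group rather than a free-times-$\Z$ subgroup). If the fixed-point sets of $a$ and $b$ are disjoint, the plan is to show that some product such as $a^m b^n$ is loxodromic on $X$ for suitably large $m, n$, thereby reducing to the earlier cases. Making this uniform is the crux of the matter: one has to rule out configurations where iterated products of two elliptic elements remain elliptic, which ultimately relies on the combinatorial $2$-dimensional geometry of the Deligne complex together with the $\delta$-hyperbolicity coming from the hyperbolic-type assumption on the underlying Coxeter group.
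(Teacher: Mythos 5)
Your case division (both elliptic, both loxodromic, mixed, on the Deligne complex and its coned-off variant) matches the paper's, and the reduction via Theorem~A of the "virtually abelian" branch is fine. But the proof has a genuine gap exactly where you flag "the crux of the matter": the case of two elliptic elements with disjoint fixed-point sets. This is not a loose end that the $2$-dimensionality and hyperbolicity "ultimately" take care of -- it is Proposition~C, the main technical result of the paper, and it occupies all of Section~3. Moreover, the reduction you propose for this case (show that some product $a^m b^n$ is loxodromic and "reduce to the earlier cases") would not suffice even if carried out: producing one loxodromic element of $\langle a, b\rangle$ does not yield freeness of $\langle a^n, b^n\rangle$ for the \emph{specific} powers $a^n, b^n$, which is what Wise's alternative demands. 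The paper instead proves directly that $\langle a^n, b^n\rangle$ is free, by taking the geodesic $\gamma$ joining $\mathrm{Fix}(a)$ to $\mathrm{Fix}(b)$ and showing that for $n$ large its $\langle a^n\rangle$- and $\langle b^n\rangle$-translates meet at Alexandrov angle at least $\pi$, so that the union of all $\langle a^n, b^n\rangle$-translates of $\gamma$ is a convex tree on which the group plays ping-pong. Establishing the angle bound requires the analysis of links of type~$2$ vertices via the Garside structure of dihedral Artin groups (the quasi-tree $\cT_{st}$ and the quotient $\mathrm{Lk}(v)/\langle\Delta_{st}\rangle$), together with Vaskou's result that the orbit of a vertex-elliptic element is quasi-isometrically embedded in the link. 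None of this is present or replaceable by a soft hyperbolicity argument: the Deligne complex itself is only CAT$(0)$/CAT$(-1)$ and the action on it is \emph{not} acylindrical.

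Two smaller but real issues in the other cases. First, you conflate the Deligne complex with its coned-off variant: acylindricity holds only for the latter, and an element loxodromic on $D_\Gamma$ whose axis lies in a standard tree is \emph{elliptic} on $\widehat{D}_\Gamma$, so the "virtually cyclic stabiliser of a pair of boundary points" argument does not apply to it; the paper handles this subcase separately by placing both elements in a centraliser $C(c)\cong \Z\times F_k$ and invoking Baudisch. Second, in the mixed case, "pass to a power of $b$ whose translation length dominates $\mathrm{diam}(F_a)$" is not the right mechanism: the ping-pong requires a uniform bound on the overlap of $a^k\Lambda$ with a neighbourhood of $\Lambda$ for \emph{all} $k\neq 0$, which the paper derives from a WPD argument on $\widehat{D}_\Gamma$ (when $b$ stabilises no standard tree) or from the rigidity of edge stabilisers of standard trees (when it does), not from comparing translation lengths.
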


Note that XXL-type Artin groups are known to be CAT(0) \cite{HaettelXXL}, and that extra-large type Artin groups are known to be systolic \cite{HO2}, so the above result provides a large new class of non-positively curved groups that satisfy Wise's Power Alternative.
In a forthcoming article with Mark Hagen, we will prove Wise's Power Alternative for some other classes of Artin groups (alongside other  groups) via their actions on trees~\cite{HMtrees}.

\bigskip

\paragraph{Strategy of the proofs.} Our proof of the Tits Alternative is geometric and relies on the action of a two-dimensional Artin group on its  Deligne complex (see Definition~\ref{def:Deligne}), a particularly well-behaved CAT(0) simplicial complex. 

In \cite{MP2}, the authors could exploit the dynamics of the acylindrical action of two-dimensional  Artin groups \textit{of hyperbolic type} on a variation of their Deligne complex to construct many free subgroups. This approach is no longer possible for general two-dimensional Artin groups, and the strategy of this paper is therefore quite different.  Instead, given a subgroup $H$ of $A_\Gamma$, we will treat separate cases depending on whether the non-trivial elements of $H$ all act loxodromically on the Deligne complex, all act elliptically, or whether $H$ contains both loxodromic and elliptic elements. In particular, we provide a classification result for subgroups of $A_\Gamma$ (Proposition~\ref{prop:subgroup_dichotomy}). A key case to consider is the case where $H$ contains two elements that act as elliptic isometries of the Deligne complex  with disjoint stable fixed-point sets, where the \textit{stable} fixed-point set of an isometry is defined as the union of the fixed-point sets of all its non-trivial powers. We recall some general questions here:

\begin{question}
	Let $a$, $b$ be two elliptic isometries of a CAT(0) space $X$ that have disjoint stable fixed-point sets.
	\begin{itemize}
		\item[(i)] Does there exist an element of $\langle a, b \rangle$ that acts loxodromically on $X$?
		\item[(ii)] Does there exists a positive integer $n$ such that  $\langle a^n, b^n \rangle$ is a free group?
	\end{itemize}
\end{question}

Such questions have been studied in the literature, in particular in relation with the question of the existence of infinite torsion subgroups of CAT(0) groups (question $(i)$, see for instance \cite[Conjecture 1.5]{NOP}), and with the Tits Alternative (question $(ii)$). 
A key intermediate result in our proofs of Theorems~\ref{thm:Tits_dim2} and~\ref{thm:power_subgroup} is the following: 

\begin{propC}\label{thm:disjoint_elliptic}
	Let $A_\Gamma$ be a two-dimensional Artin group, and let $a$, $b$ be two elements of  $A_\Gamma$ that act elliptically on the Deligne complex $D_\Gamma$ with disjoint fixed-point sets. Then there exists a positive integer $n$ such that  $\langle a^n, b^n \rangle$ is a non-abelian free group. 
	
	Moreover, an element of $\langle a^n, b^n \rangle$ acts loxodromically on $D_\Gamma$ if and only if it is not conjugated to a power of $a^n$ or $b^n$. 
	\end{propC}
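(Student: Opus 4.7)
The plan is to set up a geometric ping-pong argument inside the CAT(0) Deligne complex $D_\Gamma$, exploiting the disjointness of the fixed-point sets $F_a := \Fix_{D_\Gamma}(a)$ and $F_b := \Fix_{D_\Gamma}(b)$.

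First I would fix the geometric data. Since $a$ and $b$ act elliptically on the CAT(0) complex $D_\Gamma$, the sets $F_a$ and $F_b$ are non-empty, closed, convex subcomplexes; by hypothesis they are disjoint, and hence by standard CAT(0) geometry there exists a unique pair $(p_a,p_b) \in F_a \times F_b$ realising the distance $D := d(F_a,F_b)>0$, joined by a geodesic $\gamma$ meeting $F_a$ and $F_b$ orthogonally at its endpoints. Let $\xi_a \in \link(p_a)$ be the initial direction of $\gamma$ at $p_a$, and let $\xi_b \in \link(p_b)$ be the initial direction of $\gamma^{-1}$ at $p_b$.

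The heart of the argument is an angle-control step: I would establish that there exists $n\geq 1$ such that for every $k\neq 0$,
\[
\angle_{p_a}(\xi_a,\,a^{nk}\xi_a) \geq \pi \quad\text{and}\quad \angle_{p_b}(\xi_b,\,b^{nk}\xi_b) \geq \pi,
\]
measured by the Tits metric in the respective CAT(1) links. This will be the main obstacle. It requires a careful analysis exploiting the two-dimensional combinatorial structure of $D_\Gamma$: vertex stabilisers are (conjugates of) standard parabolic subgroups of spherical type and rank at most~$2$, i.e., trivial, infinite cyclic, or dihedral Artin groups, and their induced actions on the corresponding finite CAT(1) vertex links can be analysed explicitly. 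When $p_a$ or $p_b$ lies in the relative interior of a positive-dimensional simplex, the problem reduces to the link of a neighbouring vertex. The key point is that, because $\gamma$ leaves $F_a$ orthogonally at $p_a$, the direction $\xi_a$ is not tangent to $F_a$, so the orbit $\{a^{nk}\xi_a\}$ moves in the link; raising $a$ to a large enough power controls the rotational behaviour so as to force a Tits angle of at least $\pi$, and the same for $b$ at $p_b$.

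With the angle control in hand, the ping-pong argument proceeds as follows. Given any non-trivial cyclically reduced alternating word $w$ in $a^n$ and $b^n$, I would concatenate the appropriate translates of $\gamma$ and $\gamma^{-1}$ along the partial products of $w$ to build a $w$-invariant piecewise geodesic $\widetilde{\gamma}_w \subset D_\Gamma$. The angle condition guarantees that $\widetilde{\gamma}_w$ is a local geodesic at each breakpoint, and hence a global CAT(0) geodesic; thus $w$ acts as a loxodromic isometry translating along $\widetilde{\gamma}_w$. In particular $w$ has infinite order, and the standard ping-pong lemma gives $\langle a^n,b^n\rangle\cong F_2$. The ``moreover'' assertion is then immediate: any element conjugate to a power of $a^n$ (respectively $b^n$) fixes a conjugate of $F_a$ (respectively $F_b$) and is therefore elliptic, whereas every other non-trivial element of $\langle a^n,b^n\rangle$ is, up to conjugation, a cyclically reduced alternating word and hence loxodromic by the preceding construction.
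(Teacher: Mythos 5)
Your overall architecture coincides exactly with the paper's: take a geodesic $\gamma$ realising the distance between the two convex fixed-point sets, show that sufficiently high powers of $a$ and $b$ move $\gamma$ by Alexandrov angle at least $\pi$ at its endpoints, and conclude via the CAT(0) local-to-global principle that every reduced word yields a genuine concatenated geodesic, hence a non-trivial (and, when cyclically reduced and not a power of a single letter, loxodromic) element. The closing ``moreover'' argument is also the intended one. (One small inaccuracy: in a merely CAT(0), non-locally-compact complex the distance-realising pair $(p_a,p_b)$ need not be unique; only existence is needed, and that already requires an argument using the finitely many isometry types of simplices.)

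However, there is a genuine gap at precisely the step you flag as ``the main obstacle'': the angle-control claim is asserted, not proved, and it is the entire technical content of the proposition. Your heuristic --- that $\xi_a$ is not tangent to $\Fix(a)$, so the orbit ``moves in the link'' and large powers ``control the rotational behaviour'' --- does not establish a Tits angle of at least $\pi$: an orbit can move in a link of diameter $\pi$ without ever separating by $\pi$, and nothing a priori forces the displacement to grow with the power. What is actually required is a case analysis on the type of elliptic element and on where $\gamma$ meets the fixed set. If $a$ is vertex-elliptic at a type $2$ vertex $v_{st}$ with $m_{st}=2$, one checks directly that the link is a complete bipartite graph of diameter $\pi$ on which $a$ moves every relevant vertex to distance exactly $\pi$; if $m_{st}\geq 3$, one needs the non-trivial input that the $\langle a\rangle$-orbit of a point is quasi-isometrically embedded in the (unbounded) link, which is where the hypothesis that $a$ is not conjugate into $\langle s\rangle$ or $\langle t\rangle$ enters. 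If $a$ is tree-elliptic with fixed tree $T$, the easy cases are when $\gamma$ meets $T$ in the interior of an edge or at a type $1$ vertex (orthogonality on both sides of the edge forces angle exactly $\pi$), but when $\gamma$ meets $T$ at a type $2$ vertex one must analyse the action of a power of a generator on the quotient of the link by the right action of the Garside element $\Delta_{st}$, using the tree-of-simplices structure of the associated quotient of the Cayley graph of the dihedral group to show that the angular distance between $\bar\gamma$ and $a^{nk}\bar\gamma$ grows linearly in the number of separating axes crossed. None of this is routine, and without it the proof is incomplete.
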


Our strategy to prove Proposition~C is to construct a tree embedded in the Deligne complex on which $\langle a^n, b^n \rangle$ acts and such that there is a bijection between edges of that tree and the reduced words on $\{a^{\pm n}, b^{\pm n}\}$ . To that end, we consider a geodesic $\gamma$ between the fixed-point sets of $a$ and $b$, and we show that for some large enough $n$, the geodesic segment $\gamma$ makes an angle at least $\pi$ with any of its $\langle a^n \rangle$-translates or $\langle b^n \rangle$-translates. The CAT(0) geometry of the Deligne complex then guarantees that the resulting subspace obtained as the union of all $\langle a^n, b^n \rangle$-translates of $\gamma$ is a convex subtree of~$D_\Gamma$. (see Figure~1)

\begin{figure}\label{fig:tree}
	\begin{center}
			\begin{overpic}[width=0.7\textwidth]{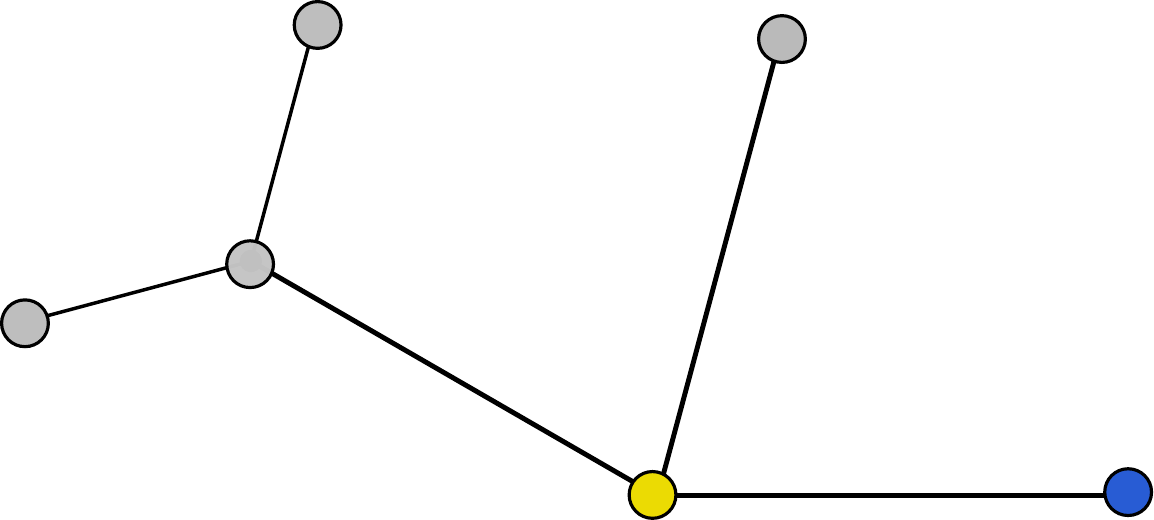}
				\put(77,5){$\gamma$}
				\put(65,20){$a^n\gamma$}
				\put(30,9){$a^{2n}\gamma$}
				\put(27,31){$a^{2n}b^n\gamma$}
				\put(1,22){$a^{2n}b^{2n}\gamma$}
			\end{overpic}
	\end{center}
\caption{A small portion representing the convex subtree of $D_\Gamma$ obtained by gluing together all the $\langle a^n, b^n\rangle$-translates of the geodesic segment $\gamma$ between  the fixed-point sets $\mathrm{Fix}(a)$ (in yellow) and~$\mathrm{Fix}(b)$ (in blue).}
\end{figure}

Controlling these angles requires a fine understanding of the local structure of the Deligne complex, and this problem can be translated into a problem about dihedral Artin groups, a class that is very well understood. 

\bigskip

In order to prove Theorem~\ref{thm:power_subgroup}, the additional assumption that $A_\Gamma$ is of hyperbolic type allows us to exploit the action of the Artin group on its \textit{coned-off Deligne complex} (see Definition~\ref{def:cone_off}), a variation of the Deligne complex introduced in \cite{MP2} on which $A_\Gamma$ acts acylindrically.  The additional dynamical properties of this action are a key tool in proving that certain subgroups generated by large powers are free.

\bigskip

\paragraph{Structure of the paper.} In Section~\ref{sec:Artin}, we recall standard definitions and results about Artin groups and their Deligne complexes. In Section~\ref{sec:elliptic}, we prove the key Proposition~C on  free subgroups generated by powers of elliptic elements with disjoint fixed-point sets. In Section~\ref{sec:Tits}, we prove Theorem~\ref{thm:Tits_dim2} by first providing a classification of the subgroups of a two-dimensional Artin group (Proposition~\ref{prop:subgroup_dichotomy}). Finally, in Section~\ref{sec:Wise}, we use the action of a two-dimensional Artin group of hyperbolic on its coned-off Deligne complex to prove Theorem~\ref{thm:power_subgroup}.

\bigskip

\paragraph{Aknowledgements} I would like to thank Piotr Przytycki, as this article grew out of previous work and discussions with him. I would also like to thank Thomas Delzant for providing a very short proof (and a much simpler one than the one originally there) of Lemma~\ref{lem:free_subgroup_mixed}. I also thank the anonymous referee for their comments and careful reading of this paper.

This work was partially supported by the EPSRC New Investigator Award EP/S010963/1.

\section{Preliminaries on Artin groups and their Deligne~complex} \label{sec:Artin}

This preliminary section contains definitions of the main groups and complexes used in this article, and recalls various results from the literature.

\subsection{Artin groups}

\begin{defin}
	A \textbf{presentation graph} is a finite simplicial graph $\Gamma$ such that every edge between vertices $a, b \in V(\Gamma)$ comes with a label $m_{ab} \geq 2$. The \textbf{Artin group} (or \textbf{Artin-Tits group}) associated to the presentation graph $\Gamma$ is the group $A_{\Gamma}$ given by 
the following presentation:
$$A_\Gamma \coloneqq \langle a \in V(\Gamma ) ~|~ \underbrace{aba\cdots}_{m_{ab}} = \underbrace{bab\cdots}_{m_{ab}}  ~~ 
\mbox{ whenever } a,b \mbox{ are connected by an edge of }\Gamma \rangle.$$

An Artin group on two generators $a,b$ with $m_{ab}<\infty$ is called a \textbf{dihedral} Artin group. 
Given an Artin group $A_\Gamma$, the \textbf{associated Coxeter group}  $W_\Gamma$ is obtained by adding the relations 
 $a^2 =1$ for every $a \in V(\Gamma)$ . An Artin group is said to be of \textbf{hyperbolic type} if the associated Coxeter group is hyperbolic, and of \textbf{spherical type} if the associated 
Coxeter group is finite. 
\end{defin}

\begin{defin}
	Given an induced subgraph $\Gamma'\subset \Gamma$, the subgroup of $A_{\Gamma}$ generated by the vertices of $\Gamma'$ 
	is called a \textbf{standard parabolic subgroup}. Such a subgroup is isomorphic to the Artin group $A_{\Gamma'}$ by \cite{vdL}, and 
conjugates of standard parabolic subgroups are called \textbf{parabolic subgroups}. 
\end{defin}

\begin{defin}
	An Artin group is said to be \textbf{two-dimensional} if for every induced triangle $\Gamma'$ of $\Gamma$, the corresponding standard parabolic subgroup $A_{\Gamma'}$ is not of spherical type. 
\end{defin}

\subsection{The (modified) Deligne complex}

We recall here some important complex on which an Artin group acts.

\begin{defin}[{\cite{CD}}]\label{def:Deligne} 
	The \textbf{(modified) Deligne complex} of a an Artin group $A_\Gamma$ is the  simplicial complex defined as follows: 
	\begin{itemize}
		\item Vertices correspond to left cosets of standard parabolic subgroups of spherical type.
		\item For every $g \in A_\Gamma$ and for every chain of induced subgraphs $\Gamma_0 \subsetneq \cdots \subsetneq \Gamma_k$ such that $A_{\Gamma_0}, \ldots, A_{\Gamma_k}$ are of spherical type, we put a $k$-simplex between the vertices $gA_{\Gamma_0}, \ldots, gA_{\Gamma_k}$.
	\end{itemize}
In other words, the (modified) Deligne complex $D_\Gamma$ is the geometric realisation of the poset of left cosets of standard parabolic subgroups of spherical type.

The group $A_\Gamma$ acts on  $D_\Gamma$ by left multiplication on left cosets.
\end{defin}

Let us recall a few elementary properties about this action. Note that since vertices of a simplex of $D_\Gamma$ correspond to left cosets of pairwise distinct subgroups, $A_\Gamma$ acts on $D_\Gamma$ without inversion: if an element of $A_\Gamma$ globally stabilises a simplex of $D_\Gamma$, it fixes it pointwise. As a consequence, the stabiliser of a simplex of $D_\Gamma$ corresponding to the chain $gA_{\Gamma_0} \subsetneq \cdots \subsetneq gA_{\Gamma_k}$ is the parabolic subgroup $gA_{\Gamma_0}g^{-1}$. 

 In the rest of this article, we will simply speak of the \textbf{Deligne complex} instead of the modified Deligne complex for simplicity.

\bigskip 

For two-dimensional Artin groups, there is a simple description of the vertices of the Deligne complex $D_\Gamma$:

\begin{lem}\label{lem:type_vertices}
	Let $A_\Gamma$ be a two-dimensional Artin group. Then vertices of $D_\Gamma$ are of the following type:
	\begin{itemize}
		\item left cosets of the form $gA_{\Gamma'} = g\{1\}$ where $\Gamma'$ is the empty subgraph. Such vertices are called \textbf{of type 0} and their stabiliser is trivial.
		\item left cosets of the form $gA_{\Gamma'} = g \langle a \rangle $ where $\Gamma' = \{a\}$ is a single vertex of $\Gamma$. Such vertices are called \textbf{of type 1} and their stabiliser is infinite cyclic.
		\item left cosets of the form $gA_{\Gamma'} = g\langle a, b \rangle$ where $\Gamma'$ is an edge between two vertices $a, b$ of $\Gamma$. Such vertices are called \textbf{of type 2} and their stabiliser is isomorphic to a dihedral Artin group.
	\end{itemize} 
In particular, the Deligne complex $D_\Gamma$ is a simplicial complex of dimension $2$. Moreover, it follows from the discussion above that the stabilisers of higher dimensional simplices of $D_\Gamma$ are as follows: 
\begin{itemize}
	\item The stabiliser of an edge of $D_\Gamma$ is either infinite cyclic or trivial.
	\item The stabiliser of a triangle of $D_\Gamma$ is trivial.
\end{itemize}
\end{lem}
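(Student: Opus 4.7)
The plan is to classify the induced subgraphs $\Gamma' \subseteq \Gamma$ for which the standard parabolic subgroup $A_{\Gamma'}$ is of spherical type, and then to read off the entire statement directly from the definition of $D_\Gamma$ and the preceding discussion of stabilisers.

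For the classification, I would first observe that the subgraphs $\emptyset$, a single vertex $\{a\}$, and an edge $\{a, b\}$ with $m_{ab} < \infty$ all yield spherical-type Artin groups, since the associated Coxeter groups are respectively trivial, $\Z/2$, and a finite dihedral group. Conversely, suppose $\Gamma'$ has at least three vertices. If some three vertices $a, b, c \in V(\Gamma')$ span a triangle in $\Gamma$, then by the two-dimensional hypothesis $A_{\{a,b,c\}}$ is not of spherical type, so $W_{\{a,b,c\}}$ is infinite and hence so is $W_{\Gamma'}$. Otherwise, some pair of vertices of $\Gamma'$ is not joined by an edge of $\Gamma$, and the parabolic Coxeter subgroup they generate is the infinite group $\Z/2 * \Z/2$, again forcing $W_{\Gamma'}$ to be infinite. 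Hence the only spherical-type standard parabolic subgroups come from the three types of subgraphs above.

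With this classification in hand, the three types of vertices of $D_\Gamma$ are exactly the cosets of the listed spherical-type standard parabolic subgroups, and the stabilisers $gA_{\Gamma'}g^{-1}$ are respectively the trivial group, an infinite cyclic group, and (by \cite{vdL}) a dihedral Artin group. The longest strictly increasing chain of such subgraphs is $\emptyset \subsetneq \{a\} \subsetneq \{a,b\}$, of length two, so $\dim D_\Gamma \leq 2$. For higher-dimensional simplices, the formula from the preceding discussion gives the result at once: for a triangle the minimal term in the chain is of type $0$, so the stabiliser is trivial; for an edge the minimal term is of type $0$ or $1$, giving a trivial or infinite cyclic stabiliser. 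There is no real obstacle in this proof; the only substantive step is the initial classification, which reduces cleanly to the two-dimensional hypothesis together with the elementary fact that two disconnected generators in a Coxeter presentation produce an infinite dihedral subgroup.
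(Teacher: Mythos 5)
Your proof is correct and fills in exactly the argument the paper leaves implicit (the lemma is stated without proof as a direct consequence of the definition of two-dimensionality and the preceding discussion of simplex stabilisers): the classification of spherical-type standard parabolics via induced triangles versus non-adjacent pairs, and the identification of stabilisers with the parabolic attached to the minimal subgraph in the chain, are precisely what is needed.
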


While the geometry of Deligne complexes is still mysterious in general (and strongly related to the $K(\pi,1)$-conjecture for Artin groups, see \cite{CD}), the geometry of Deligne complexes for two-dimensional Artin groups is well-understood:

\begin{thm}[{\cite{CD}}]\label{thm:Charney_Davis}
	Let $A_\Gamma$ be a two-dimensional Artin group. Its Deligne complex can be endowed with a CAT(0) metric as follows:
	For a chain $$g\{1\} \subsetneq g\langle a \rangle \subsetneq g \langle a,b \rangle ~~~~~~~~~~~~~~ \mbox{ (for $g \in A_\Gamma$ and $a, b$ adjacent vertices of  $\Gamma$) }$$ we identify the corresponding triangle of $D_\Gamma$ with a  triangle of the Euclidean plane $\mathbb{E}^2$ with angles $\frac{\pi}{2m_{ab}}, \frac{\pi}{2}$, and $\frac{\pi}{2} - \frac{\pi}{2m_{ab}}$ at the vertices $g\langle a, b\rangle, g\langle a \rangle,  g \{1\}$ respectively, and such that the edge between $g\langle a \rangle$ and $ g \{1\}$ has length $1$.  
	
	This defines an $A_\Gamma$-equivariant piecewise-Euclidean structure on $D_\Gamma$ (in the sense of \cite[Definition~I.7A.9]{BH}), which turns $D_\Gamma$ into a CAT$(0)$ space. 
	
	If moreover $A_\Gamma$ is of hyperbolic type, then one can adapt this construction to endow $D_\Gamma$ with an $A_\Gamma$-equivariant piecewise-hyperbolic metric that is CAT$(-1)$. (See for instance \cite[Section~3.1]{MP2} for details in that case.)
\end{thm}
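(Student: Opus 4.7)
The plan is to apply the standard link criterion for piecewise-Euclidean complexes with finitely many shapes (see \cite[Theorem~II.5.20]{BH}): such a complex is CAT(0) if and only if it is simply connected and the link of every vertex is CAT(1). The whole argument therefore splits into a well-definedness step, a global topological step, and a local geometric step.

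First, I would verify that the prescribed triangle shapes glue into a well-defined $A_\Gamma$-equivariant piecewise-Euclidean metric on $D_\Gamma$. The shape attached to a $2$-simplex coming from the chain $g\{1\} \subsetneq g\langle a\rangle \subsetneq g\langle a,b\rangle$ depends only on the label $m_{ab}$, so left multiplication by $A_\Gamma$ permutes simplices by isometries, and adjacent triangles share compatible edge lengths and angles at their common vertex by construction. By Lemma~\ref{lem:type_vertices} the complex $D_\Gamma$ is genuinely two-dimensional, so this data fully specifies the metric. Simple connectedness of $D_\Gamma$ would be extracted from the contractibility of the Deligne complex for two-dimensional Artin groups established in \cite{CD}, which is the crucial point at which the two-dimensionality hypothesis enters on the topological side.

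The heart of the proof is then the link condition. Every link in $D_\Gamma$ is a metric graph, and CAT(1)-ness for a metric graph is equivalent to the requirement that every embedded combinatorial cycle has total length at least $2\pi$. At a type-$0$ vertex the link is bipartite with edges of length $\pi/2 - \pi/(2m_{ab})$; at a type-$1$ vertex $g\langle a\rangle$ it is bipartite with edges of length $\pi/2$; and at a type-$2$ vertex $g\langle a,b\rangle$ it is bipartite with edges of length $\pi/(2m_{ab})$. The type-$0$ and type-$1$ cases are handled by elementary combinatorics on the poset of spherical-type parabolic cosets: cosets of distinct parabolic subgroups incident to a given coset are pairwise distinct, forcing enough combinatorial length for bipartite cycles. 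The decisive case is the type-$2$ link, which can be identified with the incidence graph of the cosets of $\langle a\rangle$ and of $\langle b\rangle$ inside the dihedral Artin group $A_{\{a,b\}}$, and for which the required threshold is combinatorial girth at least $4m_{ab}$.

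This girth estimate in the dihedral group is the main obstacle. I would prove it by reducing any hypothetical short embedded cycle to an alternating word on $a$ and $b$ of syllable-length strictly less than $2m_{ab}$ that must represent an element simultaneously in $\langle a\rangle$ and in $\langle b\rangle$, and then ruling this out using Garside normal form for $A_{\{a,b\}}$ (where the only non-trivial braid relation appears at length exactly $m_{ab}$ on each side). With the link condition verified everywhere, the Cartan--Hadamard theorem upgrades local CAT(0) to global CAT(0). For the CAT($-1$) statement under the hyperbolic-type hypothesis, I would rerun the whole argument with piecewise-hyperbolic triangles matching the same combinatorial angle pattern but with total angle strictly less than $\pi$; the link computations are purely combinatorial and therefore unchanged, so the same girth thresholds now give strict inequalities, yielding locally CAT($-1$) and hence globally CAT($-1$).
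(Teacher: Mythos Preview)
The paper does not prove this theorem at all; it is quoted from \cite{CD} and used as a black box. Your sketch is essentially the Charney--Davis argument (link condition plus simple connectedness), and the overall architecture is correct.

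There is, however, one genuine mislocation of the hypothesis. You write that the two-dimensionality assumption enters ``on the topological side'' via simple connectedness, and that the type-$0$ link is handled by ``elementary combinatorics''. In fact simple connectedness of the modified Deligne complex holds for \emph{all} Artin groups (it follows from the presentation, essentially a van Kampen argument on the fundamental domain), while the type-$0$ link is precisely where two-dimensionality is needed. The link of $g\{1\}$ is the incidence graph of $\Gamma$, and an embedded $6$-cycle corresponds to a triangle $a,b,c$ of $\Gamma$; its angular length is
\[
2\Bigl(\tfrac{\pi}{2}-\tfrac{\pi}{2m_{ab}}\Bigr)+2\Bigl(\tfrac{\pi}{2}-\tfrac{\pi}{2m_{bc}}\Bigr)+2\Bigl(\tfrac{\pi}{2}-\tfrac{\pi}{2m_{ca}}\Bigr)=3\pi-\pi\Bigl(\tfrac{1}{m_{ab}}+\tfrac{1}{m_{bc}}+\tfrac{1}{m_{ca}}\Bigr),
\]
which is $\geq 2\pi$ if and only if $\tfrac{1}{m_{ab}}+\tfrac{1}{m_{bc}}+\tfrac{1}{m_{ca}}\leq 1$, i.e.\ exactly when $A_{\{a,b,c\}}$ is not spherical. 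Combinatorial girth $\geq 6$ alone is insufficient (three edges with label $2$ would give length $3\pi/2$). Longer cycles are automatically fine since each $m\geq 2$ forces the sum of reciprocals to be at most half the cycle length in $\Gamma$. Your treatment of the type-$1$ and type-$2$ links is correct, and the CAT$(-1)$ refinement is morally right: the hyperbolic-type hypothesis makes the inequality above \emph{strict}, which is what buys the room to replace each Euclidean triangle by a small hyperbolic one while keeping all link girths $\geq 2\pi$.
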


\subsection{Dihedral Artin groups and some associated graphs.} \label{sec:graph_dihedral}

Dihedral Artin groups appear as stabilisers of vertices of  type $2$ in the  Deligne complex, so we 
mention here some results about these groups and the links of type $2$ vertices.

\medskip

A dihedral Artin group on two standard generators $s, t$ will be denoted $A_{st}$ for simplicity (even though the group depends on the coefficient $m_{st}$). There are two types of dihedral Artin groups: If $m_{st}=2$, the group is isomorphic to $\Z^2$. We now focus on the structure of dihedral Artin groups where $m_{st}\geq 3$. The following result is well-known to experts, see for instance \cite{Crisp}.

\begin{lem}\label{lem:virtually splits}
	Let $A_{st}$ be a dihedral Artin group with $m_{st}\geq 3$. Then  $A_{st}$ contains a finite-index subgroup that is isomorphic to a direct product of the form $\Z\times F_n$. 
	\label{lem:dihedral_split}
\end{lem}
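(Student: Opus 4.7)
The plan is to exhibit the desired finite-index subgroup by combining the centre of $A_{st}$ with a free subgroup of finite index in its central quotient.

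First I would recall the classical fact that for $m_{st}\geq 3$ the centre $Z:=Z(A_{st})$ is infinite cyclic, generated by the Garside element $\Delta=\underbrace{sts\cdots}_{m_{st}}$ when $m_{st}$ is even and by $\Delta^2$ when $m_{st}$ is odd; in both cases one has $\Delta^2=(st)^{m_{st}}$. Next I would show that the central quotient $\overline{A}:=A_{st}/Z$ is virtually free. For $m_{st}$ odd, one uses the standard alternative presentation $A_{st}\cong\langle x,y\mid x^2=y^{m_{st}}\rangle$ (setting $x=\Delta$ and $y=st$), from which $\overline{A}\cong\Z/2\ast\Z/m_{st}$ is immediately a free product of finite cyclic groups, and hence virtually free. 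For $m_{st}$ even, the analogous presentation $\langle u,v\mid u^{m_{st}/2}=v^{m_{st}/2}\rangle$ of the index-$2$ subgroup of $A_{st}$ generated by $u=st$ and $v=ts$ (the relation being a direct rewriting of the standard Artin relation) exhibits a virtually free quotient of a finite-index subgroup of $\overline{A}$, so $\overline{A}$ is again virtually free. Geometrically, both cases reflect the Seifert-fibered structure of the $(2,m_{st})$-torus link complement, whose base $2$-orbifold has virtually free orbifold fundamental group.

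Once $\overline{A}$ is known to be virtually free, I would choose a torsion-free subgroup $F\leq\overline{A}$ of finite index (necessarily free, say of rank $n$) and let $H\leq A_{st}$ be its preimage under the projection $A_{st}\to\overline{A}$. Then $H$ has finite index in $A_{st}$ and fits into a short exact sequence
$$1\to Z\to H\to F\to 1,$$
which is \emph{central} because $Z$ is the centre of the ambient group and hence of $H$. Since $F\cong F_n$ is free it has cohomological dimension $1$, so $H^2(F_n;Z)=0$; central extensions of $F_n$ by the infinite cyclic group $Z$ are classified by this cohomology group, so the sequence splits. Being central, this splitting provides a direct product decomposition $H\cong Z\times F_n\cong\Z\times F_n$, as required.

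The main obstacle is the identification of $\overline{A}$ as virtually free in a uniform way across the parities of $m_{st}$; once this is in hand, the remaining step is a routine application of group cohomology via the vanishing of $H^2$ of a free group.
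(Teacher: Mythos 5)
The overall architecture of your argument is the standard one and is sound: identify the centre $Z$ of $A_{st}$, show the central quotient is virtually free, pull back a finite-index free subgroup $F\cong F_n$, and split the resulting central extension $1\to Z\to H\to F_n\to 1$ using $H^2(F_n;\Z)=0$. The odd case (torus knot presentation $\langle x,y\mid x^2=y^{m_{st}}\rangle$ with $x=\Delta$, $y=st$, central quotient $\Z/2\ast\Z/m_{st}$) and the cohomological splitting step are both correct. (The paper itself gives no proof of this lemma, only a citation, so there is nothing to compare against.)

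However, the even case contains a genuine error: the subgroup $\langle st,\,ts\rangle$ does \emph{not} have index $2$ in $A_{st}$ when $m_{st}$ is even --- it has infinite index. Indeed, for $m_{st}=2k$ the relation is $(st)^k=(ts)^k$, so the assignment $s\mapsto 1$, $t\mapsto -1$ defines a surjective homomorphism $A_{st}\to\Z$ killing both $st$ and $ts$; hence $\langle st,ts\rangle$ lies in a kernel of infinite index. (Note this homomorphism exists precisely because $m_{st}$ is even, which is why the odd case is unaffected.) As written, your argument therefore does not produce a finite-index subgroup of $\overline{A}$ in the even case. The fix is standard: setting $a=st$ and $b=s$, the relation $(st)^k=(ts)^k$ rewrites as $a^k=b^{-1}a^kb$, so $A_{st}\cong\langle a,b\mid [a^k,b]=1\rangle$ with centre $\langle a^k\rangle=\langle\Delta\rangle$, and the central quotient is $\Z/k\ast\Z$, which is virtually free; the rest of your argument then goes through verbatim.
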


Dihedral Artin groups have a rich structure, and admit in particular a normal form called a \textit{Garside normal form}, which we now describe.

\begin{defin} For a dihedral Artin group with $m_{st}\geq 3$,  the \textbf{Garside element} $\Delta_{st}\in A_{st}$ is defined as follows:
	$$\Delta_{st} \coloneqq \underbrace{sts\cdots}_{m_{st}}= \underbrace{tst\cdots}_{m_{st}}.$$
\end{defin}

\begin{lem} \label{lem:garside_generators} For a dihedral Artin group with $m_{st}\geq 3$, we have the following:
	\begin{itemize}
		\item If $m_{st}$ is even, then $\Delta_{st}$ is a central element.
		\item If $m_{st}$ is odd, then $\Delta_{st}s= t\Delta_{st}$ and $\Delta_{st}t= s\Delta_{st}$
	\end{itemize}
\end{lem}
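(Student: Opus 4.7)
The plan is to derive both statements directly from the sole defining relation of $A_{st}$, namely $\underbrace{sts\cdots}_{m_{st}} = \underbrace{tst\cdots}_{m_{st}}$, which tells us that $\Delta_{st}$ has two equal expressions as alternating words of length $m_{st}$ in the generators. The strategy in each case is to pick whichever of the two forms makes the computation immediate, multiply by $s$ or $t$ on the appropriate side, and then observe that both sides of the claimed identity are the \emph{same} alternating word of length $m_{st}+1$ in $s$ and $t$ (and hence the same element of $A_{st}$).

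In the even case $m_{st}=2k$, the two forms of $\Delta_{st}$ are $\underbrace{sts\cdots t}_{2k}$ (starting with $s$, ending in $t$) and $\underbrace{tst\cdots s}_{2k}$ (starting with $t$, ending in $s$). To check $s\Delta_{st}=\Delta_{st}s$, I would compute $s\Delta_{st}$ using the second form and $\Delta_{st}s$ using the first; both produce the alternating word of length $2k+1$ starting (and therefore ending) with $s$. The analogous computation, with the roles of $s$ and $t$ swapped, yields $t\Delta_{st}=\Delta_{st}t$. Since $s$ and $t$ generate $A_{st}$, it follows that $\Delta_{st}$ is central.

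In the odd case $m_{st}=2k+1$, the two forms of $\Delta_{st}$ are $\underbrace{sts\cdots s}_{2k+1}$ and $\underbrace{tst\cdots t}_{2k+1}$. Then $\Delta_{st}s$, computed from the second form, equals the alternating word of length $2k+2$ starting with $t$ and ending in $s$; and $t\Delta_{st}$, computed from the first form, is the same alternating word of length $2k+2$. This gives $\Delta_{st}s=t\Delta_{st}$, and the symmetric computation gives $\Delta_{st}t=s\Delta_{st}$.

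There is no real obstacle here: the whole content is that $\Delta_{st}$ has two presentations of length $m_{st}$ that start with different letters, so one can always multiply on the appropriate side to extend either presentation to the alternating word of length $m_{st}+1$. Parity of $m_{st}$ simply determines whether the extended word begins and ends with the same letter (even case, forcing centrality) or with different letters (odd case, producing the twist).
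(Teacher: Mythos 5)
Your proof is correct: the paper states this lemma without proof (it is a standard fact about Garside elements of dihedral Artin groups), and your direct computation from the single defining relation --- extending either spelling of $\Delta_{st}$ to the alternating word of length $m_{st}+1$ and matching first/last letters according to parity --- is exactly the verification a reader would supply. Nothing is missing.
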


\begin{defin}[Atoms, left-weighted form]\label{def:atoms}
	An \textbf{atom} of $A_{st}$ is a strict subword of either $\underbrace{sts\cdots}_{m_{st}}$ or 
	$\underbrace{tst\cdots}_{m_{st}}$, that is, an alternating product of $s$ and $t$ on strictly fewer than $m_{st}$ letters. 
	The set of all atoms of $A_{st}$ is denoted $M$. A concatenation of the form $m_1\cdots m_k$, with all $m_i \in M$, is said to 
	be \textbf{left-weighted} if for every $i$ the last letter of $m_i$ coincides with the first letter of $m_{i+1}$.  
\end{defin}

\begin{thm}[Garside normal form {\cite{MairesseMatheus}}]
	Every element of $A_{st}$  can be written in a unique way as a product of the form $m_1\cdots m_k \Delta_{st}^\ell$, where $\ell \in \Z$, the $m_i's$ are atoms of $A_{st}$ and the concatenation $m_1\cdots m_k$ is left-weighted.\qed
\end{thm}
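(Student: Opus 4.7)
The plan is to first establish the normal form inside the positive submonoid $A_{st}^+$ of $A_{st}$ (the submonoid generated by $s$ and $t$, carrying the same defining relation but no inverses), and then extend it to the whole group using powers of $\Delta_{st}$.

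\textbf{Reducing to the positive case.} I would first show that every $g \in A_{st}$ can be written as $p\,\Delta_{st}^{-N}$ with $p \in A_{st}^+$ and $N \geq 0$. The identity $\Delta_{st} = s \cdot \underbrace{tst\cdots}_{m_{st}-1}$ gives $s^{-1} = \underbrace{tst\cdots}_{m_{st}-1} \Delta_{st}^{-1}$, and symmetrically for $t^{-1}$. Starting from any word representing $g$, I replace each inverse generator by a positive atom times $\Delta_{st}^{-1}$, and then use Lemma~\ref{lem:garside_generators} to commute all $\Delta_{st}^{-1}$ factors to the right (this may swap $s \leftrightarrow t$ when $m_{st}$ is odd, but leaves the remaining part positive). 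Collecting the $\Delta_{st}^{-1}$ factors yields the desired expression.

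\textbf{Existence in the monoid.} Given $p \in A_{st}^+$, I would greedily extract as $m_1$ a maximal-length atom that left-divides $p$ in $A_{st}^+$, then write $p = m_1 p'$ and iterate on $p'$. If at some stage $\Delta_{st}$ itself becomes a left divisor, it is pulled to the right using Lemma~\ref{lem:garside_generators} and absorbed into the power count. The left-weighted condition then follows from the maximality: if $m_i$ ended in $s$ but $m_{i+1}$ began with $t$, then $m_i$ could be extended on the right by $t$ to a strictly longer alternating word, which is still an atom provided $|m_i| < m_{st}-1$; and in the borderline case $|m_i| = m_{st}-1$ the extended word would be $\Delta_{st}$, which would have already been factored out at the previous step.

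\textbf{Uniqueness.} Suppose $m_1\cdots m_k \Delta_{st}^\ell = m_1'\cdots m_{k'}' \Delta_{st}^{\ell'}$ in $A_{st}$. Multiplying both sides by a sufficiently large power of $\Delta_{st}$ and applying Lemma~\ref{lem:garside_generators} once more to shuffle atoms past $\Delta_{st}$ factors, one reduces to an equality of two left-weighted atomic decompositions of a single positive element. The key observation of the existence step is that the first factor in any left-weighted decomposition equals the maximal atom left-dividing the product, so $m_1 = m_1'$; cancellativity of $A_{st}^+$ inside $A_{st}$ lets us strip off $m_1$ and iterate, giving $m_i = m_i'$ for all $i$ and finally $\ell = \ell'$.

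\textbf{Main obstacle.} The crux of the argument is the foundational Garside-theoretic structure of $A_{st}^+$: cancellativity, the embedding $A_{st}^+ \hookrightarrow A_{st}$, and the existence of a unique maximal-length atom left-divisor for every non-$\Delta_{st}^k$ positive element. In the dihedral case these properties can be verified directly by a Dehornoy-style word reversing procedure applied to the single defining relation $\underbrace{sts\cdots}_{m_{st}} = \underbrace{tst\cdots}_{m_{st}}$, exploiting the alternating structure and the equal lengths of the two sides. Once these are in place, the normal form drops out from the greedy algorithm described above.
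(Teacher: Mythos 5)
The paper offers no proof of this statement at all: it is quoted from Mairesse--Math\'eus with a \qed, so there is nothing internal to compare your argument against. Your outline is the standard Garside-theoretic proof and the overall strategy is sound: reduce to the positive monoid $A_{st}^+$ via $s^{-1}=(tst\cdots)\Delta_{st}^{-1}$ and the (skew-)centrality of $\Delta_{st}$ from Lemma~\ref{lem:garside_generators}, run the greedy algorithm extracting maximal atom left-divisors, and get uniqueness from the fact that a left-weighted decomposition is exactly the greedy one. Two points carry essentially all the weight and are only gestured at. First, as you say, the foundational monoid facts (cancellativity, $A_{st}^+\hookrightarrow A_{st}$, existence of left gcd's/lcm's so that ``the'' maximal atom left-divisor is well defined); word reversing does handle the single dihedral relation, but this is where the real work lives. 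Second, the uniqueness step needs more than you state: to see that the first factor of \emph{any} left-weighted decomposition is the maximal atom prefix, you must rule out that a longer atom $m_1x$ (with $x$ the letter opposite to the last letter of $m_1$) left-divides $m_1m_2\cdots m_k$; after cancelling $m_1$ this amounts to $x\preceq m_2\cdots m_k$ where $m_2$ begins with the other letter, which via $\mathrm{lcm}(s,t)=\Delta_{st}$ forces $\Delta_{st}\preceq m_2\cdots m_k$ --- so you need the (inductive) lemma that $\Delta_{st}$ does not left-divide a left-weighted product of atoms. With that lemma supplied, your induction via cancellation goes through, and the comparison of the $\Delta_{st}$-exponents at the end also follows. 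So: correct skeleton, with the named prerequisites and this one omitted lemma being the genuine content.
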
 

Note that in Definition~\ref{def:atoms}, we only consider atoms that are positive words in $s, t$, as negative powers of the generators $s, t$ can be absorbed in powers of the Garside element in the Garside normal form above. 

Consider the Cayley graph $\mathrm{Cay}(A_{st}, M)$ for the generating set $M$  consisting of all atoms of $A_{st}$. It follows from Lemma~\ref{lem:garside_generators} that $\langle \Delta_{st} \rangle$ acts on the graph  $\mathrm{Cay}(A_{st}, M)$ by multiplication \textit{on the right}. This motivates the following definition:

\begin{defin}\label{def:quasi_tree_simplices}
	We denote by $\cT_{st}$ the quotient graph: 
	$$\cT_{st} \coloneqq \mathrm{Cay}(A_{st}, M) / \langle \Delta_{st} \rangle$$
	
	where $\langle \Delta_{st} \rangle$ acts on  $\mathrm{Cay}(A_{st}, M)$ by multiplication on the right.
	The action of $A_{st}$ on $\mathrm{Cay}(A_{st}, M)$ by left multiplication induces an action of $A_{st}$  
	on $\cT_{st}$.
\end{defin}

\begin{figure}\label{fig:bestvina}
	\begin{center}
		\begin{overpic}[width=0.9\textwidth]{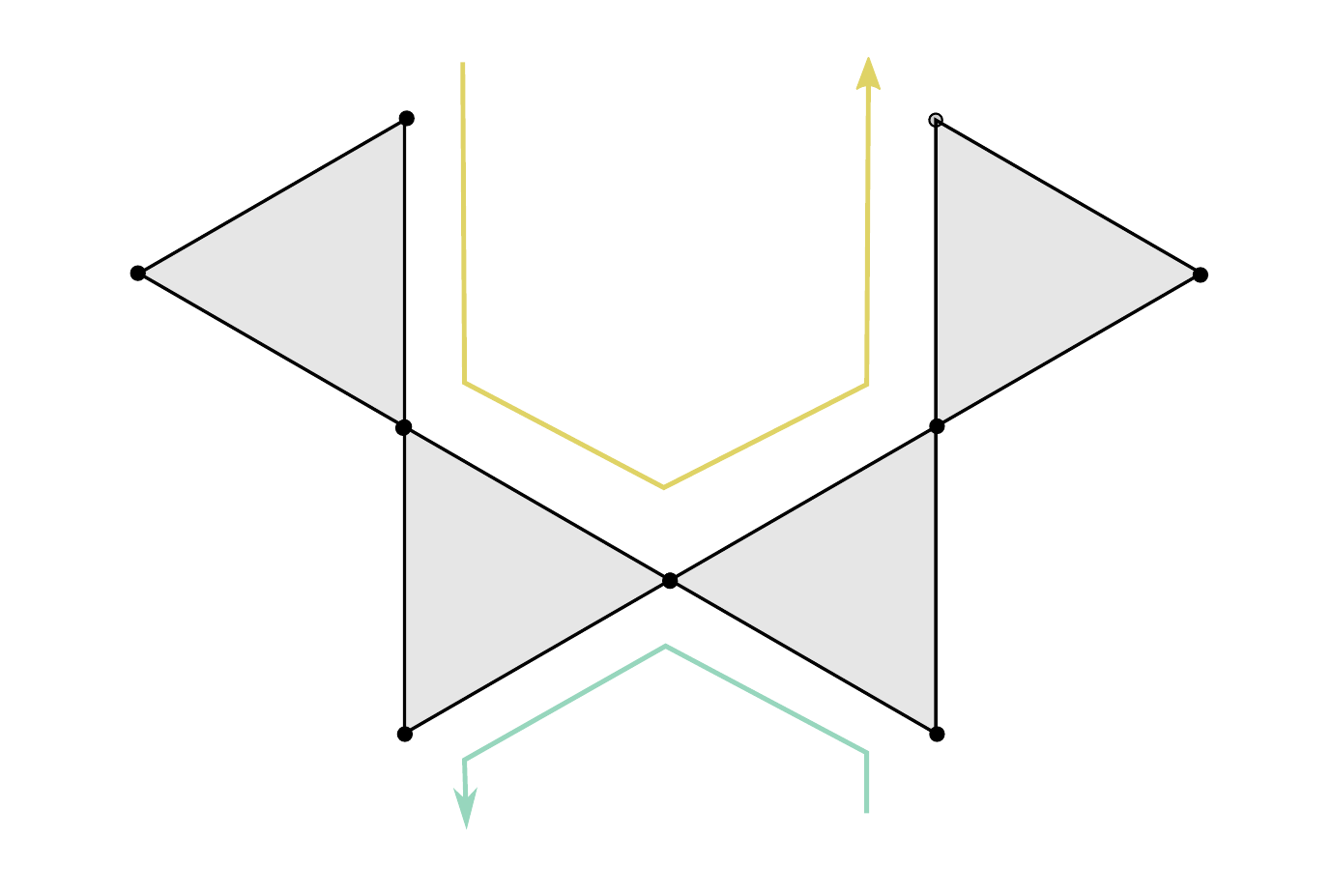}
						\put(54,23){$e$}
						\put(73,33){$s$}
						\put(25,33){$ts$}
						\put(73,10){$st$}
						\put(26,10){$t$}
						\put(92,45){$s\cdot st$}
						\put(1,45){$ts\cdot s$}
						\put(70,61){$s\cdot s$}
						\put(23,61){$ts\cdot st$}
		\end{overpic}
		\caption{A portion of the (flag completion of the) quasi-tree $\cT_{st}$ for $m_{st}=3$, with the combinatorial axes~$\mathrm{Axis}(s)$ in yellow and~$\mathrm{Axis}(t)$ in green.}
	\end{center}
\end{figure}

It follows  from the existence and uniqueness of Garside normal forms that elements of    
$A_{st}/\langle \Delta_{st}\rangle $, i.e.  the vertices of $\cT_{st}$, are in $1$-to-$1$ correspondence with left-weighted elements of the free monoid $M^\bullet$ on $M$. The graph $\cT_{st}$ was  introduced (albeit via a different but equivalent definition) and studied in \cite{BestvinaArtin}.  The following fact, already observed in \cite{BestvinaArtin}, is the geometric counterpart of the uniqueness of Garside normal forms in $A_{st}$: 

\begin{lem}\label{lem:garside_quasitree}
	The graph $\cT_{st}$ is a quasi-tree. More precisely, the flag completion of $\cT_{st}$ is a tree of simplices of dimension $m_{st}-1$, whose maximal simplices are either in the $A_{st}$-orbit of the simplex 
	spanned  by $$e, s, st, \ldots, \underbrace{sts\cdots}_{m_{st}-1},$$ or in the $A_{st}$-orbit of the simplex 
	spanned  by $$e, 
	t, ts, \ldots, \underbrace{tst\cdots}_{m_{st}-1}.$$
\end{lem}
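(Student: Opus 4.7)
The plan is to use the Garside normal form as a parametrization of vertices, to exhibit explicit maximal simplices, and to use Garside length as a height function to extract the tree-of-simplices structure.

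First, by uniqueness of the Garside normal form $m_1 \cdots m_k \Delta_{st}^\ell$, the set $A_{st}/\langle \Delta_{st}\rangle$ is in bijection with left-weighted atom words, so I can identify vertices of $\cT_{st}$ with such words, with the empty word representing $[e]$. For each vertex $v$ and each $y \in \{s, t\}$, I introduce
\[
\Sigma_{v, y} = \bigl\{\, v \cdot \underbrace{yxy \cdots}_{l} : 0 \leq l \leq m_{st} - 1 \,\bigr\},
\]
where $x$ is the other generator, and verify that its $m_{st}$ vertices form a clique in $\cT_{st}$: for indices $i < j$, the difference between the corresponding two vertices is an alternating word of length $j - i < m_{st}$, hence an atom in $M$. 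Using Lemma~\ref{lem:garside_generators} to rewrite an edge of the form $[v]\,{-}\,[vm^{-1}]$ as an edge $[v]\,{-}\,[vm']$ via the complementary atom $m' = m^{-1}\Delta_{st}$, I show that the $2(m_{st}-1)$ neighbors of $v$ are partitioned according to the starting letter of the connecting atom into the two subsets $\Sigma_{v,s} \setminus \{v\}$ and $\Sigma_{v,t} \setminus \{v\}$, with no edge between them (a direct Garside computation). It follows that $\Sigma_{v,s}$ and $\Sigma_{v,t}$ are precisely the two maximal cliques through $v$ and they share only $v$.

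Second, to exhibit the tree structure, I define the \emph{anchor} of a maximal simplex as its vertex of minimal Garside length. A Garside analysis of $\Sigma_{v, y}$ shows this minimum is always unique: for $v \neq e$ with last letter $x$, the simplex $\Sigma_{v, x}$ is anchored at $v$ because all other vertices have length $|v|+1$; while the simplex $\Sigma_{v, y}$ with $y \neq x$ is anchored at the Garside prefix $m_1 \cdots m_{k-1}$ of $v$, corresponding to the unique index $l = m_{st} - |m_k|$ at which the last atom $m_k$ of $v$ completes into a full copy of $\Delta_{st}$ (the remaining vertices of $\Sigma_{v,y}$ all having length exactly $|v|$). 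Since each vertex $v \neq [e]$ is thus the anchor of a unique maximal simplex (namely $\Sigma_{v, \text{last}(v)}$), and since the only simplices anchored at $[e]$ are $\Sigma_{e, s}$ and $\Sigma_{e, t}$, the maximal simplices organize into a rooted tree with these two simplices as roots, each further simplex being attached at a non-anchor vertex of its parent. Induction on the Garside length of the anchor then rules out cycles in the associated dual graph. The orbit description and the quasi-tree statement follow: any maximal simplex is the $A_{st}$-translate of $\Sigma_{e,s}$ or $\Sigma_{e,t}$ by the element sending $[e]$ to its anchor, and a tree of bounded-diameter simplices is automatically a quasi-tree.

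The main obstacle I anticipate is the Garside computation in the second step: verifying precisely that in $\Sigma_{v,y}$ with $y$ opposite to the last letter of $v$, exactly one vertex has length $|v|-1$ and all others have length $|v|$. This requires careful tracking of the Garside length of $v \cdot \underbrace{yxy\cdots}_l$ as $l$ varies past $m_{st} - |m_k|$, using in particular the $s \leftrightarrow t$ conjugation action of $\Delta_{st}$ from Lemma~\ref{lem:garside_generators} when $m_{st}$ is odd, in order to bring the $\Delta_{st}$-factor to the right and normalize modulo the right action of $\langle \Delta_{st}\rangle$.
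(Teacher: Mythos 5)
Your proof is correct. Note that the paper offers no argument for this lemma: it attributes the statement to Bestvina and only remarks that it is ``the geometric counterpart of the uniqueness of Garside normal forms'', which is precisely the route you take, so there is nothing to compare against beyond checking your argument on its own terms. The two computations you flag do go through: if $w,w'$ are nonempty atoms starting with $s$ and $t$ respectively, then $w^{-1}w'\equiv u\,w''$ modulo the right $\langle\Delta_{st}\rangle$-action, where $u$ is the right-complement of $w$ in $\Delta_{st}$ and $w''=\Delta_{st}w'\Delta_{st}^{-1}$, and this product is left-weighted on two atoms, so $[vw]$ and $[vw']$ are not adjacent and $\Sigma_{v,s}$, $\Sigma_{v,t}$ are indeed the only maximal cliques at $v$; and for $v=m_1\cdots m_k$ with $y$ opposite to the last letter of $m_k$, the word $m_k\cdot\underbrace{yxy\cdots}_{l}$ is a single atom for $l<m_{st}-|m_k|$, equals $\Delta_{st}$ for $l=m_{st}-|m_k|$, and equals $\Delta_{st}$ times a nonempty atom otherwise, so exactly one vertex of $\Sigma_{v,y}$ has Garside length $k-1$ and the anchor is unique. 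The only wrinkle is that $[e]$ anchors two maximal simplices, so the incidence structure is a tree rooted at the \emph{vertex} $[e]$ rather than at a single simplex; your maximal-height (equivalently, inductive) argument excluding cycles is unaffected by this.
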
 

We refer to Figure~2 for an illustration of this result. 

\bigskip

\paragraph{Links of type $2$ vertices.}

Links of type $2$ vertices will play an important role when proving Proposition~C. The following description of these links follow directly from the definition of the Deligne complex.

\begin{lem}\label{lem:link_dihedral_cocompact}
	Let $A_{st}$ be a standard dihedral parabolic corresponding to the vertex~$v_{st}$ of $D_\Gamma$. The link  $\link (v_{st})$ admits the following description: 
	Vertices of $\link (v_{st})$ correspond to cosets of the form $g\langle s\rangle, g \langle t \rangle, $ or $g\{1\}$, 
	and for every $g \in A_{st}$, we add an edge between the vertices $g\{1\}$ and $g\langle s\rangle$, as well as an edge between the vertices $g\{1\}$ 
	and $g \langle t \rangle$. 
	
	In other words, $\link (v_{st})$ is the first barycentric subdivision of the graph whose vertex set is the set of left cosets of $\langle s \rangle$ and $\langle t \rangle$ in $A_{st}$, and such that two cosets are connected by an edge when they have a non-empty intersection.
\end{lem}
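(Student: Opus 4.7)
The plan is to directly unpack the definition of the Deligne complex at the vertex $v_{st}$, which itself corresponds to the coset $A_{st}$. First, I would identify which left cosets of spherical-type standard parabolics index the vertices of $\link(v_{st})$: these are precisely the cosets strictly comparable to $A_{st}$ under inclusion. By Lemma~\ref{lem:type_vertices}, $D_\Gamma$ has no vertex of type greater than $2$, so no spherical-type standard parabolic strictly contains $A_{st}$, and every vertex of $\link(v_{st})$ corresponds to a coset strictly contained in $A_{st}$. This forces the coset to have the form $g\{1\}$, $g\langle s\rangle$, or $g\langle t\rangle$ with $g \in A_{st}$, which already accounts for the vertex set in the first description.

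Second, I would identify the edges of $\link(v_{st})$: each such edge arises from a $2$-simplex of $D_\Gamma$ containing $v_{st}$, i.e.\ from a chain $g\{1\} \subsetneq h\langle u\rangle \subsetneq A_{st}$ with $u\in\{s,t\}$ and $g,h\in A_{st}$. The containment $g \in h\langle u\rangle$ forces $h\langle u\rangle = g\langle u\rangle$, which yields exactly the edges described in the statement: $\{g\{1\}, g\langle s\rangle\}$ and $\{g\{1\}, g\langle t\rangle\}$ for $g \in A_{st}$. This establishes the first (combinatorial) description of $\link(v_{st})$.

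Finally, for the reformulation as a barycentric subdivision, I would introduce the auxiliary graph $\Lambda$ whose vertex set consists of all cosets $g\langle s\rangle$ and $g\langle t\rangle$, with an edge between two such cosets precisely when they share a common element. Using that standard parabolic subgroups embed (\cite{vdL}), and in particular $\langle s\rangle \cap \langle t\rangle = \{1\}$ in $A_{st}$, any non-empty intersection of a left $\langle s\rangle$-coset and a left $\langle t\rangle$-coset is a singleton $\{g\}$. This sets up a bijection between edges of $\Lambda$ and type-$0$ vertices $g\{1\}$ of $\link(v_{st})$, matching each pair of edges of $\link(v_{st})$ incident to $g\{1\}$ with the two halves of the corresponding edge of $\Lambda$. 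The argument is a routine bookkeeping exercise; the only step requiring input beyond the definitions is invoking \cite{vdL} to conclude that these intersections are either empty or singletons, so I do not anticipate any serious obstacle.
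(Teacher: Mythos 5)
Your argument is correct and is precisely the direct unpacking of Definition~\ref{def:Deligne} that the paper has in mind (the paper offers no proof, stating only that the description follows directly from the definition of the Deligne complex). The one external input you use, that a left $\langle s\rangle$-coset and a left $\langle t\rangle$-coset meet in at most one element because $\langle s\rangle \cap \langle t\rangle = \{1\}$, is exactly the fact the paper itself invokes without comment in Definition~\ref{def:graph of orbits}, so there is no gap.
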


\subsection{Standard trees}

Standard trees were introduced in \cite{MP2} and played a key role in the proof of the Tits alternative for two-dimensional Artin groups of hyperbolic type. They will also be used in this article.

\begin{defin}[{\cite[Definition~4.1]{MP2}}]
	Let $A_\Gamma$ be a two-dimensional Artin group, let $a\in A_\Gamma$ be a standard generator, and let $g \in A_\Gamma$. Then the fixed-point set $\Fix(gag^{-1}) = g\Fix(a)$ is a convex subtree of $D_\Gamma$ called a \textbf{standard tree}.
\end{defin}

We recall here a few elementary facts about fixed-point sets and standard trees. For some concrete examples and  illustrations of standard trees, both bounded and unbounded, we refer the reader to \cite[Example~4.7]{MP2}.

\begin{lem}[{\cite[Corollary~2.16]{HMSArtin}}] \label{lem:standard_tree_power}
	Let $a$ be a standard generator of $A_\Gamma$ and let $k \in \Z$ be a non-zero integer. Then the trees $\Fix(a)$ and $\Fix(a^k)$ coincide.
\end{lem}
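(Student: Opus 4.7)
The plan is to establish the non-trivial inclusion $\Fix(a^k) \subseteq \Fix(a)$ via a local argument on the Deligne complex, which will reduce the problem to a root-closure property for standard generators. Given a point $x \in \Fix(a^k)$, I would let $\sigma$ be the smallest simplex of $D_\Gamma$ containing $x$; since $A_\Gamma$ acts on $D_\Gamma$ without inversion (see the discussion following Definition~\ref{def:Deligne}), the element $a^k$ fixes $\sigma$ pointwise and hence lies in the stabiliser $P := \St(\sigma)$. Consequently, it suffices to prove that $a$ itself lies in $P$, as this would immediately give $x \in \Fix(a)$.

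By Lemma~\ref{lem:type_vertices}, $P$ is either trivial, infinite cyclic generated by a conjugate of a standard generator, or a conjugate of a spherical dihedral parabolic $A_{st}$. The trivial case is ruled out because standard generators have infinite order. The remaining cases both reduce, after conjugating everything by some $h \in A_\Gamma$, to the following \emph{root-closure} statement: if $\Gamma' \subseteq \Gamma$ is either a single vertex $\{s\}$ or an edge $\{s,t\}$ with $m_{st} < \infty$, and $b := h^{-1}ah$ is a conjugate of the standard generator $a$ satisfying $b^k \in A_{\Gamma'}$, then in fact $b \in A_{\Gamma'}$. When $\Gamma' = \{s\}$, the relation $b^k = s^j$ places $b$ in the centraliser of the non-trivial element $s^j$ of $A_\Gamma$; this centraliser is controlled in a two-dimensional Artin group via the action on $D_\Gamma$ (essentially the stabiliser of the standard tree $\Fix(s)$), and combined with the primitivity of $b$ as a conjugate of a standard generator, one concludes $b \in \langle s \rangle$. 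When $\Gamma' = \{s, t\}$, I would invoke the Garside normal form recalled in Section~\ref{sec:graph_dihedral} together with uniqueness of the parabolic closure of $b$ in $A_\Gamma$, to conclude that $b$ itself, and not just $b^k$, must already live in $A_{st}$.

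The main obstacle in this plan is precisely the root-closure step: this is not a soft consequence of the CAT(0) geometry of $D_\Gamma$ but rather a structural fact about parabolic subgroups of two-dimensional Artin groups, requiring the specific combinatorics of dihedral Artin groups (atoms and Garside normal forms), the primitivity of standard generators under abelianisation, and a careful analysis of their centralisers. The preceding CAT(0) reduction via minimal enclosing simplices and the without-inversion property is essentially formal; the substantive content of the argument is converting the algebraic statement ``$b^k \in A_{\Gamma'}$'' into ``$b \in A_{\Gamma'}$'' when $b$ is a conjugate of a standard generator.
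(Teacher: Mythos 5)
First, a point of comparison: the paper does not prove this statement at all --- it is quoted directly from \cite[Corollary~2.16]{HMSArtin} --- so there is no internal proof to measure yours against. Judged on its own terms, your proposal has a genuine gap at exactly the point you flag yourself. The formal reduction (pass to the minimal simplex $\sigma$ containing $x$, use the without-inversion property to get $a^k\in\St(\sigma)$, and note that $\St(\sigma)$ is trivial, a conjugate of some $\langle s\rangle$, or a conjugate of a dihedral $A_{st}$) is correct but carries essentially no content; everything rests on the root-closure claim, and neither of the two arguments you sketch for it holds up as written.

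In the rank-one case your argument is circular: identifying the centraliser of $s^j$ with the stabiliser of the standard tree $\Fix(s)$ requires knowing that $\Fix(s^j)=\Fix(s)$, which is precisely the lemma being proved (applied to the generator $s$). Lemma~\ref{lem:standard_tree_stab} only identifies $\St(\Fix(s))$ with $C(s)$, not with $C(s^j)$. Even granting $b\in C(s)\cong \Z\times F_n$, the passage from ``$b^k=s^j$ and $b$ is primitive in the abelianisation'' to ``$b\in\langle s\rangle$'' is asserted rather than argued, and it is not automatic without knowing how $s$ sits inside the product decomposition. In the dihedral case you invoke ``uniqueness of the parabolic closure of $b$,'' which is a substantial structural theorem about parabolic subgroups of two-dimensional Artin groups that is neither proved nor cited anywhere in this paper; appealing to it assumes a statement at least as strong as the one to be proved. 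The actual content of \cite[Corollary~2.16]{HMSArtin} is exactly this root-closure/local analysis, carried out through the Garside combinatorics of dihedral Artin groups and the description of fixed edges at a type-$2$ vertex (in the spirit of Lemma~\ref{lem:garside_orbit}); your proposal defers that argument rather than supplying it.
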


\begin{lem}[{\cite[Corollary~2.18]{HMSArtin}}] \label{lem:standard_tree_intersect}
	Two distinct standard trees of $D_\Gamma$ are either disjoint or intersect along a single vertex (necessarily of type $2$).
	
	In particular, an element of $A_\Gamma$ globally stabilises a standard tree $T$ if and only if it sends at least one  edge of $T$ to an edge of $T$.
	\end{lem}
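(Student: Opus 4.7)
The plan is to first show that any edge of $D_\Gamma$ with a non-trivial stabiliser lies in a \emph{unique} standard tree; the first part of the lemma will then follow from convexity, and the ``moreover'' part from a short computation with edge stabilisers.

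By Lemma~\ref{lem:type_vertices}, the only edges of $D_\Gamma$ with non-trivial stabiliser are those joining a type-1 vertex $g\langle s\rangle$ to an adjacent type-2 vertex $g\langle s,t\rangle$, and the stabiliser of such an edge is the infinite cyclic group $\langle gsg^{-1}\rangle$. Since standard generators of Artin groups are primitive (i.e.\ they are not proper powers in $A_\Gamma$; this is a classical consequence of the existence of the homomorphism $A_\Gamma\to\Z$ sending each standard generator to $1$), the cyclic group $\langle gsg^{-1}\rangle$ determines the conjugate $gsg^{-1}$ up to inversion. Combining this with Lemma~\ref{lem:standard_tree_power}, the edge is contained in exactly one standard tree, namely $\Fix(gsg^{-1})$. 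The same reasoning applies verbatim to type-1 vertices, so every type-1 vertex lies in a unique standard tree as well.

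For the first claim, suppose two standard trees $T_1$ and $T_2$ share an edge; then by the previous paragraph $T_1=T_2$, contradicting the hypothesis. Hence any intersection point lies in the interior of no edge. As standard trees are subtrees of $D_\Gamma$, the intersection $T_1\cap T_2$ is itself a (possibly empty) subtree; if it contained two vertices, the simplicial path between them would be contained in $T_1\cap T_2$ and would include an edge, a contradiction. So $T_1\cap T_2$ consists of at most one vertex, and this vertex cannot be of type~0 (trivial stabiliser, incompatible with lying in any standard tree) nor of type~1 (unique standard tree by the previous paragraph). It is therefore of type~$2$.

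For the ``moreover'' part, one direction is immediate. Conversely, assume that $g\in A_\Gamma$ sends some edge $e$ of $T=\Fix(a)$ (with $a=hsh^{-1}$ a conjugate of a standard generator) to an edge $ge$ of $T$. Both $e$ and $ge$ have non-trivial stabiliser, and by the first paragraph this stabiliser must equal $\langle a\rangle$ in both cases. On the other hand, $\St(ge)=g\St(e)g^{-1}=\langle gag^{-1}\rangle$, so $\langle gag^{-1}\rangle=\langle a\rangle$, which forces $gag^{-1}=a^{\pm 1}$. Using Lemma~\ref{lem:standard_tree_power} we conclude
\[
gT \;=\; g\Fix(a) \;=\; \Fix(gag^{-1}) \;=\; \Fix(a^{\pm 1}) \;=\; \Fix(a) \;=\; T.
\]
The main subtlety in this plan is the primitivity of standard generators, which is the only non-combinatorial ingredient; once this is used to pin down the standard tree from its edge (or type-1 vertex) stabiliser, the rest reduces to convexity of subtrees and a direct manipulation of cyclic stabilisers.
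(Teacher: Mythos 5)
Your proof is correct. The paper itself gives no argument for this statement (it is quoted from \cite{HMSArtin}, Corollary~2.18), so there is nothing to compare line by line; your route --- showing via the homomorphism $A_\Gamma\to\Z$ and Lemma~\ref{lem:standard_tree_power} that an edge with non-trivial (necessarily infinite cyclic) stabiliser $\langle gsg^{-1}\rangle$ lies in the unique standard tree $\Fix(gsg^{-1})$, then using convexity to rule out shared edges and a stabiliser computation for the ``moreover'' part --- is exactly the natural self-contained argument, and all the steps (in particular $\St(ge)=g\St(e)g^{-1}$ and $\Fix(a^{\pm1})=\Fix(a)$) check out.
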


\begin{lem}[{\cite[Lemma~2.27]{HMSArtin}}] \label{lem:standard_tree_stab}
	Let $T$ be the standard tree corresponding to the standard generator $a \in A_\Gamma$. 
		The pointwise stabiliser of $T$ is the cyclic subgroup $\langle a \rangle$, and in particular coincides with the stabiliser of every edge of $T$.
		Moreover, the global stabiliser of $T$ is the centraliser $C(a)$ of $a$, which is isomorphic to a direct product of the form $\Z \times F_n$. 
\end{lem}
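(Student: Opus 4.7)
My plan is to establish the three assertions in order, using the classification of vertices and stabilizers from Lemma~\ref{lem:type_vertices}.

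First, I would show that the pointwise stabilizer of $T$ equals $\langle a\rangle$ and agrees with every edge stabilizer. By Lemma~\ref{lem:standard_tree_power}, $\langle a\rangle$ fixes $T$ pointwise, so it suffices to compute edge stabilizers. Any edge of $T$ joins a type~$1$ vertex $h\langle s\rangle$ to a type~$2$ vertex $h\langle s,t\rangle$ (type~$0$ vertices have trivial stabilizer, so cannot lie in $T$), and its stabilizer is $\St(h\langle s\rangle)\cap \St(h\langle s,t\rangle) = h\langle s\rangle h^{-1}$, which contains $\langle a\rangle$. Writing $a=hs^kh^{-1}$ and using that standard generators of two-dimensional Artin groups are not proper powers, I get $k=\pm 1$ and therefore $\langle a\rangle = h\langle s\rangle h^{-1}$; this proves the pointwise stabilizer claim and the edge stabilizer claim simultaneously.

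Next, for the global stabilizer: if $g\in C(a)$ then $g\Fix(a) = \Fix(gag^{-1}) = \Fix(a) = T$, giving $C(a)\subseteq \St(T)$. Conversely, for $g\in \St(T)$, the element $gag^{-1}$ fixes $gT=T$ pointwise, so by the previous step $gag^{-1}\in\langle a\rangle$, forcing $gag^{-1}=a^{\pm 1}$. To exclude the case $gag^{-1}=a^{-1}$, I would analyse the induced action of $g$ on the link of a type~$2$ vertex $v_2\in T$: by Lemma~\ref{lem:link_dihedral_cocompact} this link is a bipartite graph on which $a$ acts by a translation on the set of neighbors of the $T$-direction at $v_2$, and since the $A_\Gamma$-action on $D_\Gamma$ preserves the partial-order orientation of edges inherited from the coset poset, $g$ must preserve this translation direction, forcing $gag^{-1}=a$.

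Finally, for the product decomposition: the quotient $C(a)/\langle a\rangle$ acts on the tree $T$ with trivial edge stabilizers and trivial type~$1$-vertex stabilizers by the first step. At a type~$2$ vertex $v_2\in T$ whose stabilizer is a dihedral parabolic $D\ni a$, the induced vertex stabilizer in the quotient is $C_D(a)/\langle a\rangle$, which via the Garside normal form (together with Lemma~\ref{lem:virtually splits}) is a free group. Bass--Serre theory then expresses $C(a)/\langle a\rangle$ as the fundamental group of a graph of free groups with trivial edge groups, hence it is itself a free group of some rank $n$. Lifting free generators back to $C(a)$, they commute with $a$ by definition of $C(a)$, splitting the short exact sequence $1\to\langle a\rangle\to C(a)\to F_n\to 1$ as a direct product $\Z\times F_n$. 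The main obstacle will be the orientation argument in the second step, which requires a careful local analysis of the $g$-action on links of type~$2$ vertices.
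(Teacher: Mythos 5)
This lemma is not proved in the paper at all: it is quoted verbatim from \cite{HMSArtin}*{Lemma~2.27}, so there is no internal argument to compare yours against. Your outline has the right architecture (edge stabilisers via the parabolic description, $\St(T)=C(a)$ by comparing fixed-point sets, Bass--Serre for the splitting), but the two places where you do real work both lean on claims that are either unjustified or harder than necessary, and both are handled at once by the homomorphism $\varphi\colon A_\Gamma\to\Z$ sending every standard generator to $1$ (the same device the paper uses in the proof of Lemma~\ref{lem:distinct_tree_elliptic}). In your first step, ``standard generators are not proper powers'' is true but not free of charge; instead, $a=hs^kh^{-1}$ gives $1=\varphi(a)=k$, so $k=1$ and $\langle a\rangle=h\langle s\rangle h^{-1}$ directly. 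In your second step, the assertion that $gag^{-1}\in\langle a\rangle$ forces $gag^{-1}=a^{\pm1}$ is itself not automatic (a conjugate of $a$ landing in $\langle a\rangle$ could a priori be any power $a^k$, as in Baumslag--Solitar groups), and the orientation argument you flag as the main obstacle is also the weakest link: type-preservation of the $A_\Gamma$-action does not obviously control the translation direction of $a$ on the link of a type~$2$ vertex. Both issues evaporate at once, since $\varphi(gag^{-1})=\varphi(a)=1$ forces $gag^{-1}=a$ outright, giving $\St(T)\subseteq C(a)$ with no local analysis.

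Two genuine gaps remain in your third step. First, you must actually compute $C_D(a)$ for $D$ a dihedral parabolic containing $a$: for $m_{st}\geq 3$ the centraliser of a standard generator is $\langle s\rangle\times Z(A_{st})\cong\Z^2$ (and $D\cong\Z^2$ when $m_{st}=2$), so the type~$2$ vertex groups in the quotient graph of groups are infinite cyclic; ``Garside normal form together with Lemma~\ref{lem:virtually splits}'' does not by itself deliver this, and it is the one place where dihedral-group-specific input is unavoidable. Second, Bass--Serre theory only yields that $C(a)/\langle a\rangle$ is free; to conclude $F_n$ with $n$ finite you need the quotient graph of groups to be finite, i.e.\ the action of $C(a)$ on $T$ to be cocompact. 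This is provable (the type~$1$ vertices of $T$ fall into finitely many $C(a)$-orbits and each has valence bounded by the valence of the corresponding vertex of $\Gamma$), but your outline does not address it. The final splitting argument is fine: $\langle a\rangle$ is central in $C(a)$, a free quotient always lifts, and a split central extension with this kernel is a direct product.
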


Finally, we will need a local understanding of standard trees.  By Lemma~\ref{lem:garside_generators}, conjugation by the Garside element $\Delta_{st}$ either permutes the generators $s, t$ (when $m_{st}$ is odd) or centralises each of them (when 
$m_{st}$ is even). Thus, the action of $\langle \Delta_{st} \rangle$ by \textit{right} multiplication on $A_{st}$ induces an action on the set of
\textit{left cosets} of $\langle s \rangle$ and $\langle t \rangle$: 
$$g\langle s \rangle \Delta_{st} = g \Delta_{st} \langle t \rangle ~~ \mbox{ and }  ~~ g\langle t \rangle \Delta_{st} = 
g \Delta_{st} \langle s \rangle  ~~\mbox{ if $m_{st}$ is odd,}$$
$$g\langle s \rangle \Delta_{st} = g \Delta_{st} \langle s \rangle ~~ \mbox{ and }  ~~ g\langle t \rangle \Delta_{st} = 
g \Delta_{st} \langle t \rangle  ~~\mbox{ if $m_{st}$ is even.}$$

The following lemma provides a description of the neighbourhood of a type $2$ vertex in a given standard tree. It is essentially a reformulation of \cite[Lemma~4.3]{MP2}:

\begin{lem}[{\cite[Lemma~2.35]{HMSArtin}}]\label{lem:garside_orbit}
	Let $v_{st}$ be the type $2$ vertex of $D_\Gamma$ corresponding to the coset $A_{st}$. Two vertices of the link $\link(v_{st})$ correspond to edges of $D_\Gamma$ in the same standard tree if and only if the 
	corresponding cosets are in the same $\langle \Delta_{st} \rangle$-orbit (for the multiplication on the right).
\qed
\end{lem}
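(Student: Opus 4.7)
The plan is to reduce the geometric claim to a conjugation problem inside the dihedral Artin group $A_{st}$ and then exploit the explicit relations satisfied by the Garside element $\Delta_{st}$. The vertices of $\link(v_{st})$ that correspond to edges of $D_\Gamma$ lying in some standard tree are exactly the type-$1$ vertices, that is, the cosets $g\langle s\rangle$ and $g\langle t\rangle$ with $g\in A_{st}$; indeed edges from $v_{st}$ to a type-$0$ vertex have trivial stabiliser and cannot lie in any standard tree. The edge $[v_{st},g\langle s\rangle]$ has pointwise stabiliser $\langle gsg^{-1}\rangle$ and therefore lies in the standard tree $\Fix(gsg^{-1})$. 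Invoking Lemma~\ref{lem:standard_tree_stab}, two standard trees $\Fix(a)$ and $\Fix(b)$ coincide iff $\langle a\rangle=\langle b\rangle$; a short abelianisation argument (the image of a standard generator in $A_\Gamma^{\mathrm{ab}}$ is a nonzero element of a torsion-free abelian group) rules out the alternative $a=b^{-1}$, so the lemma reduces to the algebraic equivalence: $gsg^{-1}=g'ug'^{-1}$ (with $u\in\{s,t\}$) iff $g'\langle u\rangle\in g\langle s\rangle\cdot\langle\Delta_{st}\rangle$.

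For the ``if'' direction, I would use the formulas for the right action of $\langle\Delta_{st}\rangle$ on left cosets of $\langle s\rangle$ and $\langle t\rangle$ displayed just above the lemma statement. Combined with the identities of Lemma~\ref{lem:garside_generators}, namely $\Delta_{st}t\Delta_{st}^{-1}=s$ when $m_{st}$ is odd and $\Delta_{st}s\Delta_{st}^{-1}=s$ when $m_{st}$ is even, a one-line computation shows that $(g\Delta_{st})u(g\Delta_{st})^{-1}=gsg^{-1}$ in both cases. Iterating, the entire $\langle\Delta_{st}\rangle$-orbit of $g\langle s\rangle$ determines the same standard-generator conjugate, and hence the same standard tree.

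For the converse, I would describe the set of elements $h\in A_{st}$ with $huh^{-1}=s$ for $u\in\{s,t\}$. The centraliser of $s$ in $A_{st}$ is $\langle s\rangle\cdot Z(A_{st})$, where $Z(A_{st})$ is generated by $\Delta_{st}$ when $m_{st}$ is even and by $\Delta_{st}^2$ when $m_{st}$ is odd; these are classical facts that follow from the product structure in Lemma~\ref{lem:virtually splits}. When $m_{st}$ is odd, the set of elements conjugating $t$ to $s$ is the coset $\Delta_{st}\cdot C_{A_{st}}(t)$; when $m_{st}$ is even, the abelianisation argument above shows $s$ and $t$ are not conjugate in $A_{st}$, so only the case $u=s$ arises. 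In every case, one concludes that $g^{-1}g'\in\langle s\rangle\cdot\langle\Delta_{st}\rangle$ up to a single factor of $\Delta_{st}$, which is precisely the statement that $g'\langle u\rangle$ is a $\langle\Delta_{st}\rangle$-right-translate of $g\langle s\rangle$.

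The main obstacle is the converse direction, which requires a careful case analysis based on the parity of $m_{st}$ and a precise determination of centralisers and conjugation cosets inside the dihedral Artin group. These computations are classical, but one must verify that no ambiguity is lost when passing between the cosets $g\langle u\rangle$ and the cyclic subgroups $\langle gug^{-1}\rangle$, which is what the abelianisation argument accomplishes.
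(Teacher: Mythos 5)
Your argument is correct, and the reduction you perform (edge $[v_{st},g\langle s\rangle]$ lies in $\Fix(gsg^{-1})$; two standard trees coincide iff the conjugates of standard generators coincide, with the inverse case excluded by abelianisation; then translate the conjugacy condition into the right-$\langle\Delta_{st}\rangle$-orbit condition on cosets) is the natural one. Note that the paper itself gives no proof of this statement: it is quoted from \cite[Lemma~2.35]{HMSArtin}, so there is no in-paper argument to compare against; your write-up supplies essentially the expected proof. Two small caveats. First, the converse direction ultimately rests on the precise description of $C_{A_{st}}(s)$ and of the set of elements conjugating $t$ to $s$; these are indeed classical for dihedral Artin groups, but they do not follow merely from the virtual splitting in Lemma~\ref{lem:virtually splits} (which only asserts the existence of a finite-index $\Z\times F_n$), so a proper citation or a short Garside-normal-form computation is needed there. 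Second, your parenthetical that $Z(A_{st})=\langle\Delta_{st}\rangle$ for $m_{st}$ even fails when $m_{st}=2$ (there the group is $\Z^2$ and everything is central), although the identity $C_{A_{st}}(s)=\langle s\rangle\langle\Delta_{st}\rangle$ that you actually use still holds in that case. Neither point affects the validity of the argument.
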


\section{Free subgroups generated by elliptic elements}\label{sec:elliptic}

The goal of this section is to prove Proposition~C, on the subgroups generated by large powers of elliptic elements with disjoint fixed-point sets. 
Recall from the introduction that our goal is to construct a tree in $D_\Gamma$ on which $\langle a^n, b^n \rangle$ acts, which we obtain by first considering a goedesic $\gamma$ between the fixed-point sets of $a$ and $b$ respectively, and showing that for some large enough $n$, the geodesic segment $\gamma$ makes an angle at least $\pi$ with any of its $\langle a^n \rangle$-translates or $\langle b^n \rangle$-translates. (The notion of angle used here is the standard notion of Alexandrov angle of a CAT(0) space, see for instance \cite[II.1.12]{BH}.)

We will need the following result:

\begin{lem}\label{lem:existence_min_geodesic}
	Let $X$ be a piecewise-Euclidean (resp. piecewise-hyperbolic) complex with finitely many isometry types of simplices such that the associated path-metric is CAT$(0)$ (resp. CAT$(-1)$), and let $C, C'$ be two convex subcomplexes of $X$. Then there exists a geodesic (resp. a unique geodesic) that realises the distance between $C$ and $C'$.
\end{lem}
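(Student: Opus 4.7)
My plan is to combine Bridson's completeness theorem for $M_\kappa$-polyhedral complexes with a compactness argument exploiting the finite combinatorial complexity of bounded-length geodesics in $X$, and then to deduce uniqueness in the CAT$(-1)$ case from strict convexity.

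First, the hypothesis of finitely many isometry types of simplices combined with Bridson's theorem (see \cite[Theorem~I.7.50]{BH}) ensures that $X$ with its path metric is a complete geodesic metric space; together with the CAT$(0)$ (resp.\ CAT$(-1)$) hypothesis, $X$ is a complete CAT$(0)$ (resp.\ CAT$(-1)$) space in which geodesics between pairs of points are unique.

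For existence, set $D := d(C, C')$ and pick a minimizing sequence $(x_n, y_n) \in C \times C'$ with $d(x_n, y_n) \to D$. The geodesic segments $\gamma_n := [x_n, y_n]$ have lengths bounded by $D+1$ for $n$ large. A standard consequence of the finite-isometry-types hypothesis is that geodesic segments of uniformly bounded length in such an $M_\kappa$-complex have only finitely many ``combinatorial types'' (ordered sequences of simplices they traverse). Passing to a subsequence, all $\gamma_n$ share the same combinatorial type; in particular, there exist simplices $\sigma_0, \sigma_k$ with $x_n \in \sigma_0 \cap C$ and $y_n \in \sigma_k \cap C'$ for every $n$. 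Because $C$ and $C'$ are subcomplexes, $\sigma_0 \cap C$ and $\sigma_k \cap C'$ are closed subsets of the compact simplices $\sigma_0, \sigma_k$, hence compact. Extracting further convergent subsequences, $x_n \to x_\infty \in \sigma_0 \cap C$ and $y_n \to y_\infty \in \sigma_k \cap C'$, and continuity of the metric yields $d(x_\infty, y_\infty) = D$; the geodesic $[x_\infty, y_\infty]$ then realizes the distance.

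For uniqueness in the CAT$(-1)$ setting (assuming $D > 0$), suppose two pairs $(x, y), (x', y')$ realize $D$. By convexity of $C$ and $C'$, there are geodesics $c_1 \subset C$ from $x$ to $x'$ and $c_2 \subset C'$ from $y$ to $y'$, and the function $\varphi(t) := d(c_1(t), c_2(t))$ is convex in the CAT$(0)$ setting and strictly convex in the CAT$(-1)$ setting unless both $c_1$ and $c_2$ are constant (since CAT$(-1)$ spaces contain no non-trivial flat strips, and the distance from a point to a non-constant geodesic is strictly convex). Since $\varphi \geq D$ everywhere and $\varphi(0) = \varphi(1) = D$, strict convexity is incompatible with these boundary conditions unless $c_1$ and $c_2$ are constant, forcing $x = x'$ and $y = y'$.

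The main obstacle I anticipate is justifying the finite combinatorial complexity of bounded-length geodesics in $X$. This ultimately comes down to a local analysis of $M_\kappa$-polyhedral complexes with finite shape set, using that the finitely many simplex shapes provide uniform quantitative control on how geodesics can enter and leave cells; all other ingredients (completeness, continuity of the metric, compactness of single simplices, strict convexity in negative curvature) are standard applications of CAT$(0)$/CAT$(-1)$ theory as developed in \cite{BH}.
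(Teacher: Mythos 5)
Your uniqueness argument in the CAT$(-1)$ case is sound, and your overall skeleton (completeness via Bridson's theorem, a minimizing sequence, compactness of individual simplices) is the natural one. Note that the paper does not actually write out a proof: it defers entirely to \cite[Lemma~5.1]{MP2}, only remarking that the existence part of that argument goes through verbatim in the piecewise-Euclidean setting.

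The existence part of your argument, however, has a genuine gap at the step asserting that geodesic segments of uniformly bounded length have only finitely many ``combinatorial types'', meaning finitely many ordered sequences of simplices traversed. What the finite-shapes hypothesis actually yields (via Bridson's taut-string analysis) is a uniform bound on the \emph{number} of simplices such a segment can traverse, and finiteness of the possible \emph{isometry types} of traversal patterns --- not finiteness of the actual sequences of simplices. An infinite complex has infinitely many simplices and hence infinitely many candidate sequences, and the complexes to which this lemma is applied are not locally compact (the link of a type $2$ vertex of $D_\Gamma$ is an infinite graph), so the minimizing pairs $(x_n,y_n)$ may run through infinitely many pairwise distinct, mutually isometric simplices. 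You therefore cannot conclude that all $x_n$ lie in one fixed compact simplex $\sigma_0$, and the extraction of convergent subsequences of $(x_n)$ and $(y_n)$ --- the heart of your existence proof --- is unjustified. Closing this gap requires an additional idea for comparing the chains of simplices crossed by the $\gamma_n$ for different $n$ (for instance, developing each chain onto one of the finitely many isometric models and arguing there), which is precisely the nontrivial content of the lemma the paper cites.
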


\begin{proof}
	This is essentially Lemma~\cite[Lemma~5.1]{MP2}. Note that the statement there was only given in the case of piecewise-hyperbolic complexes, but the exact same proof carries over to the piecewise-Euclidean case to prove the existence of a geodesic realising the distance. (The piecewise-hyperbolic structure was only used to prove the uniqueness of such a geodesic.)
\end{proof}

We recalled in Section~\ref{sec:Artin} that the action on the Deligne complex is without inversion, i.e. the global stabiliser of a simplex of $D_\Gamma$ is equal to the pointwise stabiliser of that simplex. In particular,  fixed-point sets of elliptic elements  are convex sub-complexes of the CAT$(0)$ space $D_\Gamma$, and $D_\Gamma$ has finitely many isometry types of simplices by construction. Thus, the above lemma can be applied to to subcomplexes $C, C'$ that are fixed-point sets of elements of $A_\Gamma$ acting elliptically on~$D_\Gamma$. 

\medskip

The key lemma in proving Proposition~C is the following:

\begin{lem}\label{lem:big_angle}
	Let $a, b$ be two elliptic elements with disjoint fixed-point sets. Let $x \in \mathrm{Fix}(a)$ and $y \in \mathrm{Fix}(b)$ be points realising the distance between $\mathrm{Fix}(a)$ and $\mathrm{Fix}(b)$, and let $\gamma$ be the geodesic segment between $x$ and $y$.  Then there exists an integer $n\geq 1$ such that the $\langle a^n \rangle$-translates of $\gamma$ all make an angle of at least $\pi$ with each other at $x$ and $\langle b^n \rangle$-translates of $\gamma$ all make an angle of at least $\pi$ with each other at $y$.
\end{lem}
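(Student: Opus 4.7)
By symmetry, and since the angle condition passes to subgroups, it suffices to find integers $n_a, n_b \geq 1$ ensuring the angle condition at $x$ for $\langle a^{n_a}\rangle$ and at $y$ for $\langle b^{n_b}\rangle$, and then take $n := n_a n_b$. I focus on the argument at $x$. Throughout, I work inside the link $\Sigma_x$ of $x$ in $D_\Gamma$, a CAT$(1)$ space, and denote by $L \subseteq \Sigma_x$ the set of directions at $x$ pointing into $\Fix(a)$. Since $a$ fixes $x$ and the convex subcomplex $\Fix(a)$, it acts on $\Sigma_x$ by an isometry fixing $L$ pointwise.

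The key reduction is as follows. The minimising geodesic $\gamma$ exists by Lemma~\ref{lem:existence_min_geodesic}, so the first variation formula yields $d_{\Sigma_x}(v, L) \geq \pi/2$ for $v := \gamma'(0)$; by $a$-equivariance, $d_{\Sigma_x}(a^{k}v, L) \geq \pi/2$ for every $k \in \Z$. Hence it suffices to prove that, for $n$ large and every $k \neq 0$, the subset $L$ \emph{separates} $v$ from $a^{kn}v$ in $\Sigma_x$, in the sense that every path from $v$ to $a^{kn}v$ must pass through $L$. Any such path then has length at least $d_{\Sigma_x}(v, L) + d_{\Sigma_x}(L, a^{kn}v) \geq \pi$, and so $d_{\Sigma_x}(v, a^{kn}v) \geq \pi$.

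Establishing this separation property is the main difficulty. One first disposes of the degenerate case where some nontrivial power $a^N$ fixes $v$: since an isometry of a geodesic ray fixing its endpoint is the identity, $a^N$ then fixes the entire geodesic ray from $x$ in direction $v$ (and in particular $\gamma$); choosing $n := N$ makes every $\langle a^n\rangle$-translate of $\gamma$ coincide with $\gamma$, so the angle condition holds vacuously. In the remaining case no power of $a$ fixes $v$, the translates $\{a^k\gamma\}_{k\in\Z}$ are pairwise distinct, and the analysis depends on the combinatorial type of $x$. Since $a \neq 1$ and $a$ fixes $x$, $x$ cannot be of type $0$; at an interior point of an edge of $D_\Gamma$ or at a type $1$ vertex, the stabiliser of $x$ is trivial or infinite cyclic and the separation follows from a direct argument. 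The substantive case is when $x$ is a type $2$ vertex with dihedral stabiliser $A_{st}$, where the argument should exploit the explicit description of $\link(x)$ (Lemma~\ref{lem:link_dihedral_cocompact}), the $\langle \Delta_{st}\rangle$-orbit structure of the incident edges lying in standard trees (Lemma~\ref{lem:garside_orbit}), and the Garside normal form together with the tree-of-simplices structure of $\cT_{st}$ (Lemma~\ref{lem:garside_quasitree}), to show that for $n$ sufficiently large the combinatorial position of $a^{n}v$ in $\link(x)$ is trapped in a component of $\link(x) \setminus L$ disjoint from that of $v$, thereby forcing any path between them to cross $L$.
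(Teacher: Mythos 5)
There is a genuine gap: the case that carries all the difficulty is not proved, and the mechanism you propose for it is the wrong one. Your separation strategy does work in the two easy configurations (where $\gamma$ leaves $\Fix(a)$ from the interior of an edge or from a type $1$ vertex, cf.\ Lemmas~\ref{lem:big_angle_2a} and~\ref{lem:big_angle_2b}), but it breaks down at a type $2$ vertex, in two distinct ways. First, if $a$ is vertex-elliptic then $\Fix(a)$ is the single vertex $x$, so your set $L$ of directions into $\Fix(a)$ is empty: the first-variation bound $d_{\Sigma_x}(v,L)\geq \pi/2$ is vacuous and ``$L$ separates $v$ from $a^{kn}v$'' cannot hold in the connected link. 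This case needs a genuinely different input: the paper uses the complete bipartite structure of $\link(x)$ when $m_{st}=2$, and for $m_{st}\geq 3$ it invokes the nontrivial external fact (Vaskou) that the $\langle a\rangle$-orbit of $\bar\gamma$ is quasi-isometrically embedded in $\link(x)$ for the angular metric. Nothing in your plan produces the required divergence of the orbit here.

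Second, when $a$ is tree-elliptic with tree $T$ and $\gamma$ leaves $T$ at a type $2$ vertex, the claim you reduce to --- every path in $\Sigma_x$ from $v$ to $a^{kn}v$ meets $L$ --- is not the right one. Here $L$ is the $\langle\Delta_{st}\rangle$-orbit of the vertex $\langle s\rangle$ in the coset graph $\link(x)$ (Lemma~\ref{lem:garside_orbit}), and it does not disconnect $v$ from $a^{kn}v$: in the quotient $\link(x)/\langle\Delta_{st}\rangle$ its image is a single vertex $\bar T$, and the proof of Lemma~\ref{lem:big_angle_2c} must explicitly handle geodesics from $\bar\gamma$ to $a^{kn}\bar\gamma$ that avoid $\bar T$ (its Case~2), which a separation argument would render vacuous. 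The correct statement is quantitative rather than topological: a path avoiding $L$ must visit at least $f(n)$ pairwise disjoint subgraphs $s^i\Axis(t)$ of the quasi-tree $\cK_{s,t}$ (because the edges $pr_I^{-1}(s^i)$ separate $\cK_{s,t}$ by Lemma~\ref{lem:separating_edge} and $a$ translates along $\Axis(s)$), hence has angular length at least $(f(n)-1)\pi/m_{st}$, and one chooses $n$ to make this at least $\pi$. That analysis --- the graphs $\cG_{s,t}$ and $\cK_{s,t}$ and the two-case length estimate --- is exactly the content you have deferred with ``the argument should exploit\dots'', so even where your guiding idea is salvageable the proposal remains a plan rather than a proof.
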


Let us explain how this lemma implies Proposition~C.

\begin{proof}[Proof of Proposition~C]
	Let $a, b$ be two elliptic elements with disjoint fixed-point sets. Let $x \in \Fix(a)$ and $y \in \Fix(b)$  be points that realise the distance between $\Fix(a)$ and $\Fix(b)$, and let $\gamma$ be the geodesic segment between $x$ and $y$. By Lemma~\ref{lem:big_angle},
	 we can choose an integer $n\geq 1$ such that the $\langle a^n \rangle$-translates of $\gamma$ all make an angle of at least $\pi$ with each other at $x$ and $\langle b^n \rangle$-translates of $\gamma$ all make an angle of at least $\pi$ with each other at $y$.
	
	Let $g \in F(a^n,b^n)$ be a non-trivial reduced word on $a^n$ and $b^n$, which we write in the form $g = g_0\cdots g_k$, with $k\geq 1$, where 
	\begin{itemize}
		\item $g_0 =1$, 
		\item each subword $g_i$ with $i\geq 1$ is a non-trivial power of either $a^n$ or $b^{n}$,
		\item no two consecutives subwords $g_i$ and $g_{i+1}$ are powers of the same  letter $a$ or $b$.
	\end{itemize} 
	To this decomposition $g = g_0 \cdot g_1\cdots g_k$, we associate the subset 
	$$\gamma_g \coloneqq \bigcup_{0\leq i \leq k} (g_0\cdots g_i)\gamma$$
	It follows from the previous discussion that this concatenation of geodesic segments makes an angle at least $\pi$ at each branching point. Since the Deligne complex is CAT$(0)$ by Theorem~\ref{thm:Charney_Davis}, it follows from \cite[II.1.7(a) and II.4.14(2)]{BH} that $\gamma_g$ is a geodesic segment obtained by concatenating $k+1$ geodesic segments, starting with $\gamma$ and ending with $g\cdot \gamma$. In particular $g\cdot \gamma \neq \gamma$.  Thus, every non-trivial reduced word on $a^n$ and $b^n$ defines a  non-trivial element of $A_\Gamma$, and it follows that   $a^n$ and $b^n$ generate a free subgroup of $A_\Gamma$.
\end{proof}

\begin{rem}\label{rem:existence_loxodromic}
	It should be noted that the above proof can be used to further show that the element $g\coloneqq a^nb^n$ acts loxodromically on $D_\Gamma$, by verifying locally that the following subset $\Lambda_g$ is a geodesic line in $D_\Gamma$ that is an axis of $g$: 
	$$\Lambda_g \coloneqq \bigcup_{i\in \Z} g^i (\gamma \cup b^n\cdot \gamma)$$
	More generally, we leave it to the reader to show, using the same approach, that every element of $\langle a^n, b^n \rangle$ that is not conjugated to a power of $a^n$ or $b^n$ acts loxodromically on $D_\Gamma$ (by performing a similar construction to a cyclically reduced element in the conjugacy class of that element).
\end{rem}

The rest of this section is thus dedicated to the proof of Lemma~\ref{lem:big_angle}. In order to measure angles that two geodesics of $D_\Gamma$ make at at a vertex $v$, we will work with the associated angular metric on the link $\mathrm{Lk}(v)$, see for instance \cite[II.7.15]{BH}. Recall that vertices of $\mathrm{Lk}(v)$ are in bijection with edges of $D_\Gamma$ containing $v$, and edges of $\mathrm{Lk}(v)$ are in bijection with triangles of $D_\Gamma$ containing $v$. Moreover, $\mathrm{Lk}(v)$ is made into a metric graph by giving to an edge of $\mathrm{Lk}(v)$ corresponding to the triangle $\Delta$ of $D_\Gamma$ an angular length equal to the angle of $\Delta$ at $v$ (for the piecewise-Euclidean structure defined in~Theorem~\ref{thm:Charney_Davis}). We refer the reader to \cite[Section~4]{CD} for more details on the angular metrics on the links of Deligne complexes.

Note that since $D_\Gamma$ is a piecewise-Euclidean metric with finitely many isometry types of simplices, it follows from \cite[Theorem~A]{BridsonSemisimple} that an element of $A_\Gamma$ acts either elliptically or loxodromically on $D_\Gamma$. 
The following gives a complete description of those elements that act elliptically:

\begin{lem-defin}\label{lem:dichotomy_elliptic}
	Let $g\in A_\Gamma$ be a non-trivial element that acts elliptically on $D_\Gamma$. Then exactly one of the following holds:
	\begin{itemize}
		\item $\Fix(g)$ is a standard tree of $D_\Gamma$. This happens precisely when $g$ is conjugated to a non-trivial power of a standard generator of $A_\Gamma$.  Such an element is called \textbf{tree-elliptic}.
		\item $\Fix(g)$ is a single vertex of type $2 $ of $D_\Gamma$. Such an element is called \textbf{vertex-elliptic}.
	\end{itemize}
\end{lem-defin}

\begin{proof}
	Since $D_\Gamma$ is CAT$(0)$ and $g$ is elliptic, it follows from that $g$ fixes a point of $D_\Gamma$. Since $A_\Gamma$ acts without inversion, $\Fix(g)$ contains a vertex of $D_\Gamma$. Since $g$ is non-trivial, $g$ can only stabilise a vertex
	of type $1$ or $2$. If $g$ stabilises a vertex of type $1$ corresponding to a coset of the form $h\langle s \rangle$, for $s$ a standard generator, then $g$ is conjugated to a power of $hsh^{-1}$, hence $\Fix(g)$ is the standard tree $h\cdot \Fix(s)$ by Lemma~\ref{lem:standard_tree_power}. Otherwise, $g$ only stabilises vertices of type $2$, and by convexity of fixed-point sets in the CAT$(0)$ space $D_\Gamma$, $\Fix(g)$ is a single vertex of type $2$.
\end{proof}

We first case consider the case of a vertex-elliptic element:

\begin{lem}\label{lem:big_angle_1}
	Let $a\in A_\Gamma$ be a vertex-elliptic element, and let $v\in D_\Gamma$ be the corresponding fixed vertex of type $2$. Let $\gamma$ a geodesic segment of $D_\Gamma$ starting at $v$. Then there exists an integer $n\geq 1$ such that the $\langle a^n \rangle$-translates of $\gamma$ all make an angle of at least $\pi$ with each other at $v$. 
\end{lem}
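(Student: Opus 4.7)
I plan to work in the link $\mathrm{Lk}(v)$ of $v$ in $D_\Gamma$, equipped with its angular metric. Writing $\xi \in \mathrm{Lk}(v)$ for the initial direction of $\gamma$ at $v$, the angle at $v$ between two geodesic segments $a^{kn}\gamma$ and $a^{ln}\gamma$ equals $\min\bigl(\pi,\,d_{\mathrm{Lk}(v)}(a^{kn}\xi, a^{ln}\xi)\bigr)$, so the conclusion reduces to finding $n \geq 1$ such that $d_{\mathrm{Lk}(v)}(\xi, a^{mn}\xi) \geq \pi$ for every nonzero integer $m$. For this it suffices to prove that the orbit $(a^k \xi)_{k \in \Z}$ is unbounded in $\mathrm{Lk}(v)$: one then picks $N$ with $d_{\mathrm{Lk}(v)}(\xi, a^k \xi) \geq \pi$ for every $|k| \geq N$, and takes $n = N$.

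A preliminary observation is that $a$ has no fixed point in $\mathrm{Lk}(v)$. Indeed, if $a$ fixed a direction $\eta \in \mathrm{Lk}(v)$, then, since $a$ is an isometry of the CAT$(0)$ space $D_\Gamma$ fixing $v$, it would pointwise fix a nontrivial initial subsegment of the geodesic issuing from $v$ in direction $\eta$, contradicting $\Fix(a) = \{v\}$.

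The core step, which is also the main technical difficulty, is to show that $a$ acts with unbounded orbits on $\mathrm{Lk}(v)$. Up to conjugation we may assume that $v = v_{st}$ corresponds to the standard parabolic $A_{st}$, so that $a \in A_{st}$; by Lemma-Definition~\ref{lem:dichotomy_elliptic}, the vertex-ellipticity of $a$ translates algebraically into the statement that no nonzero power of $a$ is conjugate in $A_{st}$ to a power of $s$ or $t$. Using the description of $\mathrm{Lk}(v_{st})$ from Lemma~\ref{lem:link_dihedral_cocompact} together with Lemma~\ref{lem:garside_orbit}, one can define an $A_{st}$-equivariant coarsely Lipschitz map $\mathrm{Lk}(v_{st}) \to \cT_{st}$, obtained by sending a type 1 vertex $g\langle s\rangle$ or $g\langle t\rangle$ to the $\langle \Delta_{st}\rangle$-orbit of $g$ in the quasi-tree $\cT_{st}$ of Definition~\ref{def:quasi_tree_simplices}. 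It is therefore enough to show that $a$ has unbounded orbits on $\cT_{st}$. By the tree-of-simplices description provided by Lemma~\ref{lem:garside_quasitree}, the action of $A_{st}$ on $\cT_{st}$ induces a simplicial action on the dual tree $T$ whose vertices are the maximal simplices of the flag completion of $\cT_{st}$ and whose edges come from their codimension-1 faces; Tits' dichotomy for tree automorphisms then reduces the problem to ruling out that some power of $a$ stabilises a maximal simplex. The stabiliser of such a simplex is, by Lemma~\ref{lem:garside_quasitree} and a direct reading of the Garside normal form, a conjugate of a cyclic subgroup generated by a standard generator (times the centre); hence fixing a maximal simplex would force some power of $a$ to be conjugate to a power of $s$ or $t$, contradicting the algebraic characterisation of vertex-ellipticity recalled above.

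Once the orbit of $\xi$ in $\mathrm{Lk}(v)$ is known to be unbounded, choosing $n$ large enough yields the required angular lower bound, completing the proof. The hard part of the argument, as indicated, is the transfer to the quasi-tree $\cT_{st}$ and the Garside-normal-form computation of simplex stabilisers; everything else is essentially formal manipulation of the CAT$(0)$ angular metric.
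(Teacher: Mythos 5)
Your first reduction (pass to the angular metric on $\mathrm{Lk}(v)$ and bound orbit distances from below by $\pi$) is the same as the paper's, but the core of your argument has two genuine gaps. First, the transfer to $\cT_{st}$ does not work. The assignment $g\langle s\rangle \mapsto g\langle \Delta_{st}\rangle$ is not well defined: $g\langle s\rangle = gs^i\langle s\rangle$ for every $i$, while $g$ and $gs^i$ lie in different $\langle \Delta_{st}\rangle$-orbits (no nonzero power of $s$ is a power of $\Delta_{st}$ when $m_{st}\geq 3$). The honest relationship between the two graphs goes the other way: one obtains (a quotient of) $\mathrm{Lk}(v)$ from $\cT_{st}$ by collapsing each translate $g\Axis(s)$, $g\Axis(t)$ to a single point (this is exactly the diagram the paper builds with the auxiliary graphs $\cG_{s,t}$ and $\cK_{s,t}$ for Lemma~\ref{lem:big_angle_2c}), and this collapse destroys the implication you need: unboundedness of the $\langle a\rangle$-orbit on $\cT_{st}$ does not give unboundedness on $\mathrm{Lk}(v)$. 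Indeed $a=s$ acts loxodromically on $\cT_{st}$ along $\Axis(s)$, yet fixes the vertex $\langle s\rangle$ of $\mathrm{Lk}(v)$; so any argument that only rules out bounded orbits on $\cT_{st}$ cannot distinguish vertex-elliptic elements from tree-elliptic ones. What is actually needed, and what the paper invokes, is Vaskou's result (\cite[Proposition~E]{VaskouAH}) that for $a$ not conjugate to a power of $s$ or $t$ the orbit map $k\mapsto a^k\bar\gamma$ is a quasi-isometric embedding into $\mathrm{Lk}(v)$. This is also stronger than unboundedness in the way you need: an unbounded orbit of an isometry need not satisfy $d(\xi,a^k\xi)\geq\pi$ for all large $|k|$, whereas a quasi-isometric embedding does, so your step ``pick $N$ with $d(\xi,a^k\xi)\geq\pi$ for all $|k|\geq N$'' requires the stronger input anyway.

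Second, your strategy cannot cover the case $m_{st}=2$, which your write-up ignores. There $A_{st}\cong\Z^2$, there is no Garside quasi-tree to exploit, and $\mathrm{Lk}(v)$ is (the barycentric subdivision of) a complete bipartite graph of diameter exactly $\pi$: every orbit is bounded, so ``prove the orbit is unbounded'' is the wrong goal. The paper handles this case by a direct combinatorial argument: a vertex-elliptic $a$ moves each coset of $\langle s\rangle$ to a different coset of $\langle s\rangle$, and in the complete bipartite graph with all edges of length $\pi/2$ any two distinct vertices of the same colour are at distance exactly $\pi$, so $n=1$ already works. You would need to add this case, and replace your $\cT_{st}$ transfer by the quasi-isometric-embedding input (or an equivalent argument) in the case $m_{st}\geq 3$.
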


\begin{proof}
	Up to translation by an element of $A_\Gamma$, we can assume that $v$ is the vertex corresponding to the coset $\langle s, t \rangle$. There are two cases to consider depending on whether $m_{s,t}=2$ or $m_{s,t}\geq 3$. 
	
	\medskip
	
\textbf{Case 1:} Let us first consider the case where  $m_{s, t}=2$. In that case, $\langle s, t \rangle \cong \Z^2$.  Vertices of $\mbox{Lk}(v)$ either correspond to type $1$ vertices of the form $s^i \langle t \rangle $ or $t^i \langle s \rangle $ for $i \in \Z$, or to type $0$ vertices of the form $g\langle 1 \rangle$ for $g \in \langle s, t \rangle$. Note that a type $0$ of the form $g \langle 1 \rangle$ has valence $2$ in $\mbox{Lk}(v)$ and is adjacent to $g\langle s \rangle$ and $g \langle t \rangle$. In particular, $\mbox{Lk}(v)$ is isomorphic to the barycentric subdivision of a  graph whose vertices correspond to the type $1$ vertices of $\mathrm{Lk}(v)$. For the remainder of this case, we will for simplicity `forget' about type 0 vertices and will instead identify $\mathrm{Lk}(v)$  with the aforementioned graph. Note that by construction of the Deligne complex, triangles of $D_\Gamma$ (and hence edges of $\mathrm{Lk}(v)$) do not contain two vertices that correspond to left cosets of $\langle s\rangle $ or  two vertices that correspond to left cosets of $\langle t\rangle $, so this graph is  bipartite for the  colouring given by the type of coset (i.e. coset of $\langle s \rangle$ or coset of $\langle t \rangle$ respectively). Moreover, since $\langle s, t \rangle \cong \Z^2$ and $\mathrm{Lk}(v)$ contains the edge between $\langle s \rangle$ and $\langle t \rangle$, it follows that this graph is a complete bipartite graph. (See Figure~\ref{fig:link1} for an illustration.)

\begin{figure}[H]
	\begin{center}
		\begin{overpic}[width=0.6\textwidth]{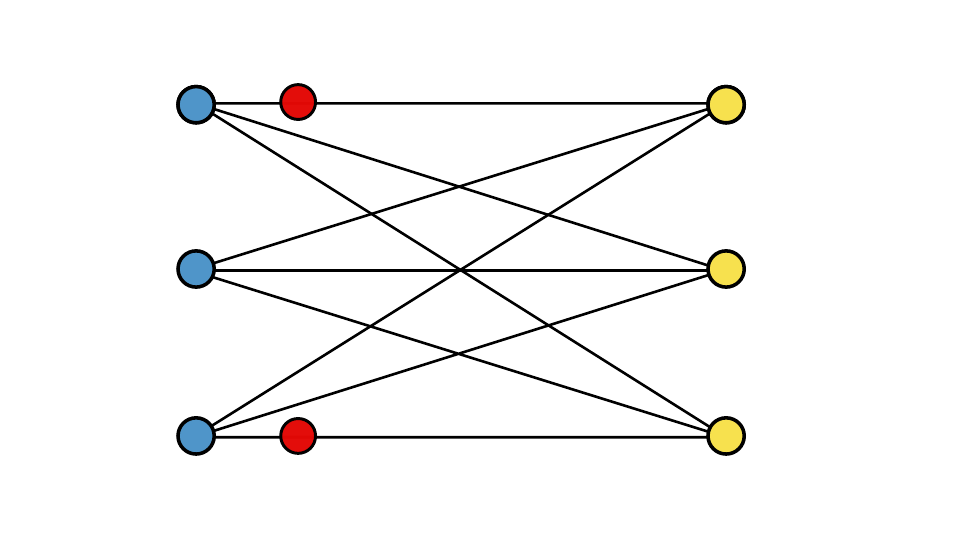}
			\put(8,28){$\langle s \rangle$}
				\put(6,45){$t\langle s \rangle$}
				\put(3,10){$t^{-1}\langle s \rangle$}
				\put(82,28){$\langle t \rangle$}
				\put(80,45){$s\langle t \rangle$}
			\put(80,10){$s^{-1}\langle t \rangle$}
				\put(47,3){$\vdots$}
				\put(47,50){$\vdots$}
			\put(29,51){\color{red} $\bar\gamma$}
			\put(27,4){\color{red} $a^{k}\bar\gamma$}
		\end{overpic}
	\caption{The link $\mbox{Lk}(v)$, as a complete bipartite graph, with the type 0 vertices ``removed''. Each edge has length $\pi/2$ for the angular metric on $\mathrm{Lk}(v)$.}\label{fig:link1}
	\end{center}
\end{figure}

By construction of the piecewise-Euclidean metric on $D_\Gamma$ and the angular metric on $\mathrm{Lk}(v)$, all egdes of $\mathrm{Lk}(v)$ have length $\pi/2$. In particular, $\mathrm{Lk}(v)$ has diameter exactly~$\pi$.   
Note that $\mathrm{Stab}(v)$ acts on $\mathrm{Lk}(v)$ by preserving the colouring. Moreover, since $a$ is vertex-elliptic, it is not conjugated to a power of the standard generators $s, t \in A_\Gamma$. It follows that $a$ sends a vertex of $\mathrm{Lk}(v)$ corresponding to a coset of $\langle s \rangle $ to a different coset of $\langle s \rangle$, which is thus at distance exactly $\pi$ (and similarly for vertices of $\mathrm{Lk}(v)$ corresponding to a coset of $\langle t \rangle $).  Let $\bar \gamma \in \mathrm{Lk}(v)$ be the point of $\mathrm{Lk}(v)$ corresponding to $\gamma$, and let $e$ be an edge of $\mathrm{Lk}(v)$ containing $\bar \gamma$. Let $k \in \Z$ be a non-zero integer. Then $a^k \cdot e$ is disjoint from $e$ by the above, and since $a^k$ preserves the colouring, it is straightforward to check that $\bar \gamma$ and $a^k \cdot \bar \gamma$ are at distance exactly $\pi$ (see Figure~3). Thus, any two distinct segments in the $\langle a \rangle$-orbit of $\gamma$ make an angle $\pi$ at $v$.

\medskip

\textbf{Case 2:} Let us now consider the case where $m_{st} \geq 3$.  Let $\bar \gamma \in \mathrm{Lk}(v)$ be the point of $\mathrm{Lk}(v)$ corresponding to $\gamma$. Since $a$ is vertex-elliptic, it is not conjugated to a power of the standard generators $s, t \in A_\Gamma$. By \cite[Proposition~E]{VaskouAH}, this implies that the $\langle a \rangle$-orbit of $\bar \gamma$  is quasi-isometrically embedded in $\mathrm{Lk}(v)$. In particular, we can choose an integer $n \geq 1$ such that any two-points of the $\langle a^n \rangle$-orbit of $\bar \gamma$ are at distance at least $\pi$ in $\mathrm{Lk}(v)$. The result follows.
\end{proof}

	We now consider the case of a tree-elliptic element, with associated standard tree $T$. There will be two cases to consider depending on whether $\gamma$ meets $T$ at a vertex of $T$ or at a point contained in the interior of an edge of $T$. 
	
	\begin{lem}\label{lem:big-angle_2}
		Let $a\in A_\Gamma$ be a tree-elliptic element, and let $T\subset D_\Gamma$ be the corresponding fixed tree. Let $Y$ be a convex subcomplex of $D_\Gamma$, and let $\gamma$ be a geodesic segment that realises the distance between $T$ and $Y$. Then there exists an integer $n\geq 1$ such that the $\langle a^n \rangle$-translates of $\gamma$ all make an angle of at least $\pi$ with each other at $v$. 
	\end{lem}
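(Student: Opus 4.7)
My plan is to proceed by a case analysis on the position of the point $x:=\gamma\cap T$ on $T$. We may assume $\gamma$ has positive length, else the statement is vacuous. Since $\gamma$ realises the distance from $T$ to $Y$, the direction $\bar\gamma\in\link(x)$ lies at angular distance at least $\pi/2$ from the set $\bar T_x\subset \link(x)$ of directions into $T$ at $x$. Note also that all $\langle a^n\rangle$-translates of $\gamma$ emanate from $x$ since $a$ fixes $T$ pointwise, so measuring these angles amounts to computing angular distances in $\link(x)$.

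First suppose $x$ is a type 1 vertex of $T$, or lies in the interior of an edge of $T$. In either subcase $a$ lies in the pointwise stabiliser of some edge of $T$ containing $x$, so by Lemma~\ref{lem:standard_tree_stab} we may (after conjugation) write $a=s^m$ for some standard generator $s$ and some $m\neq 0$. A direct analysis of the link in each subcase shows that the constraint $d(\bar\gamma,\bar T_x)\geq \pi/2$ forces $\bar\gamma$ to be a type 0 vertex of $\link(x)$ of the form $s^k\{1\}$. Indeed, when $x=\langle s\rangle$, the link is a complete bipartite graph with edges of length $\pi/2$, where $\bar T_x$ is the set of type 2 vertices $\{\langle s,t\rangle : t\text{ adj to }s\}$; when $x$ lies in the interior of an edge, $\link(x)$ is a spherical suspension whose two poles form $\bar T_x$ and whose equator consists of the discrete set $\{s^k\{1\}:k\in\Z\}$ with any two distinct points at angular distance~$\pi$. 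In both subcases $a=s^m$ acts on the type 0 vertices by $s^k\{1\}\mapsto s^{k+m}\{1\}$, so any non-trivial power of $a$ sends $\bar\gamma$ to a distinct type 0 vertex at angular distance exactly $\pi$; hence $n=1$ works.

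It remains to handle the case where $x$ is a type 2 vertex, say $x=\langle s,t\rangle$ and $a=s^m$ after conjugation. By Lemma~\ref{lem:link_dihedral_cocompact}, $\link(x)=\link(v_{st})$ is a bipartite graph, and by Lemma~\ref{lem:garside_orbit}, $\bar T_x$ is the $\langle \Delta_{st}\rangle$-orbit of $\langle s\rangle$ under right multiplication. If $\bar\gamma$ is fixed by $a$, the conclusion holds trivially. Otherwise, the goal is to prove that the $\langle a\rangle$-orbit of $\bar\gamma$ is quasi-isometrically embedded in $\link(v_{st})$ for the angular metric; this immediately produces an $n$ separating the orbit by angular distance $\geq \pi$. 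To establish this I would exploit the fact that $s$ acts loxodromically on the quasi-tree $\cT_{st}$ (its combinatorial axis is shown in Figure~\ref{fig:bestvina}), together with the tree-of-simplices structure of $\cT_{st}$ from Lemma~\ref{lem:garside_quasitree}, in order to control how powers of $s$ displace points lying outside $\bar T_x$.

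The principal obstacle is this last case: Vaskou's Proposition E, used in Case 2 of Lemma~\ref{lem:big_angle_1}, does not apply directly since $a$ is now a power of a standard generator rather than vertex-elliptic. A tailored argument specific to the dihedral action is therefore required, likely combining the Garside normal form with the quasi-tree geometry of $\cT_{st}$ to extract the needed quasi-isometric behaviour of the orbit.
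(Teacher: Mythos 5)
Your treatment of the first two cases is sound and close to the paper's: when $\gamma$ meets $T$ in the interior of an edge or at a type $1$ vertex, the distance-realising hypothesis (via Lemma~\ref{lem:angle_pi_over_2}) pins $\bar\gamma$ down to a direction perpendicular to an edge of $T$, and distinct $\langle a\rangle$-translates of such a direction are at angular distance exactly $\pi$, so $n=1$ works. The paper reaches the same conclusion in Lemmas~\ref{lem:big_angle_2a} and~\ref{lem:big_angle_2b}, arguing the upper bound $\angle_x(\gamma,a^k\gamma)\le\pi$ from the two right angles and the lower bound from the absence of loops of length $<2\pi$ in links of a CAT$(0)$ complex; your direct description of the links is an acceptable substitute.

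The type $2$ case, however, is where the entire difficulty of the lemma lies, and your proposal both leaves it unproved and aims at a false intermediate statement. You propose to show that the $\langle a\rangle$-orbit of $\bar\gamma$ is quasi-isometrically embedded in $\link(v_{st})$. This cannot work: after conjugation $a=s^m$, so $a$ \emph{fixes} the vertex of $\link(v_{st})$ corresponding to the coset $\langle s\rangle$, and hence $d(a^k\bar\gamma,\langle s\rangle)=d(\bar\gamma,\langle s\rangle)$ for all $k$; the whole orbit lies on a sphere about a fixed vertex of the (connected) link and is therefore bounded. This is precisely why Vaskou's Proposition~E is restricted to vertex-elliptic elements, and no "tailored argument" can recover quasi-isometric embeddedness here. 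What must be shown instead is that a \emph{bounded} orbit is nonetheless $\pi$-separated. The paper's proof of Lemma~\ref{lem:big_angle_2c} does this by passing to the quotient $\link(v)/\langle\Delta_{st}\rangle$ (right multiplication), where by Lemma~\ref{lem:garside_orbit} all directions into $T$ collapse to a single vertex $\bar T$ with $d(\bar\gamma,\bar T)\ge\pi/2$, and then splitting into two cases: if a geodesic from $\bar\gamma$ to $a^{kn}\bar\gamma$ passes through $\bar T$, the triangle inequality gives distance $\ge\pi/2+\pi/2$; if it avoids $\bar T$, one lifts it to an auxiliary graph $\cK_{s,t}$ built from the translates of $\mathrm{Axis}(s)$ and $\mathrm{Axis}(t)$ in the quasi-tree $\cT_{s,t}$ of Lemma~\ref{lem:garside_quasitree}, where the separating edges over the vertices $s^i$ force the lifted path to cross at least $f(n)$ pairwise disjoint translates $s^i\Axis(t)$, yielding length at least $(f(n)-1)\pi/m_{st}\ge\pi$ for $n$ large. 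Without an argument of this kind your proof of the lemma is incomplete.
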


There are several cases to consider, depending on whether $\gamma$ meets $T$ in the interior of an edge, at a type $1$ vertex, or at a type $2$ vertex. We will first need the following result:

\begin{lem}\label{lem:angle_pi_over_2}
	Let $Y, Y'$ be two convex subcomplexes of $D_\Gamma$, and let $\gamma$ be a geodesic segment realising the distance between $Y$ and $Y'$. Then $\gamma$ makes an angle at least $\pi/2$ at $x \coloneqq \gamma \cap Y$ with an any edge of $Y$ containing $x$. 
\end{lem}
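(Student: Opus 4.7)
The plan is to give a standard CAT$(0)$ first-variation argument: if the angle at $x$ between $\gamma$ and some edge of $Y$ through $x$ were strictly less than $\pi/2$, we could travel a tiny bit along that edge, stay inside $Y$ by convexity, and strictly decrease the distance to the other endpoint of $\gamma$, contradicting the minimality of the length of $\gamma$.

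More precisely, let $y \coloneqq \gamma \cap Y'$, so that $\gamma$ is the geodesic segment $[x,y]$ with $d(x,y) = d(Y, Y')$. Let $e$ be an edge of $Y$ containing $x$, and pick an endpoint $z$ of $e$ different from $x$. For $t \in [0,1]$, let $z_t$ denote the point of $e$ at distance $t \cdot d(x,z)$ from $x$. Since $Y$ is a convex subcomplex, $z_t \in Y$ for every $t$, so we have
$$d(z_t, y) \geq d(z_t, Y') \geq d(Y, Y') = d(x,y).$$

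Next I would apply the first variation formula for CAT$(0)$ spaces (see e.g.\ \cite[II.3.6]{BH}), which in this setting gives
$$d(z_t, y) \;=\; d(x,y) \;-\; t \cdot d(x,z) \cos \angle_x(y, z) \;+\; o(t)$$
as $t \to 0^+$, where $\angle_x(y,z)$ is the Alexandrov angle at $x$ between the segments $[x,y]$ and $[x,z]$. Combining this with the previous inequality yields
$$-\, t \cdot d(x,z) \cos \angle_x(y, z) + o(t) \;\geq\; 0$$
for all sufficiently small $t > 0$, which forces $\cos \angle_x(y,z) \leq 0$, i.e.\ $\angle_x(y,z) \geq \pi/2$. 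Since the angle that $\gamma$ makes at $x$ with the edge $e$ is exactly $\angle_x(y,z)$, this concludes the proof.

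There is no real obstacle here: the statement is entirely standard CAT$(0)$ projection/first-variation material, and the only thing to check is that the convexity of the subcomplex $Y$ lets us stay inside $Y$ when perturbing $x$ along $e$, which is immediate. If one preferred to avoid invoking the first variation formula, the same conclusion can be read off directly from the Euclidean comparison triangle for $\triangle(x,y,z_t)$: for small $t$, if $\angle_x(y,z) < \pi/2$, the comparison estimate gives $d(z_t, y) < d(x,y)$, again contradicting $d(z_t, Y') \geq d(x,y)$.
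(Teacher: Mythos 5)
Your argument is correct and is essentially the same as the paper's: the paper simply cites \cite[Proposition~II.2.4]{BH} (the standard fact that geodesics realising the distance to a convex set meet it at angle at least $\pi/2$), whereas you reprove that proposition via the first variation formula and convexity of $Y$. Nothing is missing; your comparison-triangle remark at the end is in fact the proof of the cited proposition.
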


\begin{proof}
	This is a direct consequence of \cite[Proposition~II.2.4]{BH}.
\end{proof}

\begin{lem}\label{lem:big_angle_2a}
	Let $a\in A_\Gamma$,  $T\subset D_\Gamma$, $Y \subset D_\Gamma$, and $\gamma$ be as in the statement of Lemma~\ref{lem:big-angle_2}. Suppose further that $\gamma$ meets $T$ at a point $x$ in the interior of an edge $e$ of $T$.  Then any two distinct $\langle a\rangle$-translates of $\gamma$  make an angle of exactly $\pi$ with each other at $x$. 
\end{lem}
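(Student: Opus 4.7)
The plan is to analyse the local structure of $D_\Gamma$ at $x$ through the angular metric on $\mathrm{Lk}(x)$. Since $a$ is tree-elliptic with $\mathrm{Fix}(a)=T$ (Lemma-Definition~\ref{lem:dichotomy_elliptic}), the element $a$ fixes both $x$ and the edge $e$ pointwise, so $\langle a \rangle$ acts on $\mathrm{Lk}(x)$ fixing the two directions along $e$ (call them north and south). Because $x$ lies in the interior of $e$, a neighbourhood of $x$ in $D_\Gamma$ is a ``book'' of triangles meeting along $e$, and $\mathrm{Lk}(x)$ is the corresponding bipartite graph with two vertices (north and south) joined by one arc of angular length $\pi$ for each triangle of $D_\Gamma$ containing $e$. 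The action of $\langle a \rangle$ on $\mathrm{Lk}(x)$ permutes these ``pages''.

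Next I would pin down the direction $\bar\gamma \in \mathrm{Lk}(x)$ of $\gamma$ at $x$. Applying Lemma~\ref{lem:angle_pi_over_2} with the roles of the two convex subcomplexes $T$ and $Y$ exchanged, $\gamma$ makes angle at least $\pi/2$ at $x$ with any edge of $T$ containing $x$, in particular with each of the two directions of $e$. The direction $\bar\gamma$ lies in the interior of some page, and within a single page the distances to north and to south sum to $\pi$; the only way both can be at least $\pi/2$ is if both equal $\pi/2$. Hence $\bar\gamma$ is precisely the midpoint of its page.

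The core remaining step is to show that for every $k \neq 0$, the image $a^k\bar\gamma$ lies in a page \emph{different} from the one containing $\bar\gamma$. If $a^k$ stabilised a triangle containing $e$, then because $A_\Gamma$ acts on $D_\Gamma$ without inversion and triangles of $D_\Gamma$ have trivial stabiliser (Lemma~\ref{lem:type_vertices}), we would obtain $a^k = 1$; but $a$ is conjugate to a non-trivial power of a standard generator, hence has infinite order, a contradiction. Thus $a^k \bar\gamma$ is the midpoint of a distinct page, and any path in $\mathrm{Lk}(x)$ joining two midpoints of distinct pages must pass through north or through south, contributing link-length $\pi/2 + \pi/2 = \pi$ in either case. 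Consequently the Alexandrov angle between $\gamma$ and $a^k\gamma$ at $x$ equals exactly $\pi$, and translating by $a^i$ shows the same for any two distinct $\langle a \rangle$-translates of $\gamma$. I do not anticipate a substantive obstacle here: the only delicate point is identifying $\mathrm{Lk}(x)$ as a book with semicircular pages of length $\pi$ and applying Lemma~\ref{lem:angle_pi_over_2} in its symmetric form.
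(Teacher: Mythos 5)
Your proof is correct and follows essentially the same route as the paper's: both identify the direction of $\gamma$ at $x$ as perpendicular to $e$ (via Lemma~\ref{lem:angle_pi_over_2}), show that no non-trivial power $a^k$ can preserve the triangle containing the initial segment of $\gamma$, and then read off the angle of exactly $\pi$ from the local structure of $\mathrm{Lk}(x)$. The only cosmetic differences are that the paper rules out $a^k\sigma = \sigma$ using $\Fix(a^k) = \Fix(a) = T$ rather than trivial triangle stabilisers, and phrases the final lower bound as the impossibility of an embedded loop of length less than $2\pi$ in $\mathrm{Lk}(x)$ rather than via your explicit description of the link as a book of semicircular pages.
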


\begin{proof}
 By assumption, there exists a triangle $\sigma$ of $D_\Gamma$ containing $e$ such that some non-trivial initial sub-segment of $\gamma$ is contained in $\sigma$.
We know from Lemma~\ref{lem:standard_tree_power} that  $\Fix(a)  = \Fix(a^k)$ for all non-zero integer $k$. Since in addition we have that $\Fix(a)$ is the standard tree $T$, we have $a^k\cdot \sigma \neq \sigma$ for all non-zero integers $k$. 
 For every integer $k$, $a^k\gamma$ realises the distance between $T$ and $a^kY$,
  so $a^k\cdot \gamma$ makes an angle of precisely $\pi/2$ with $e$ by Lemma~\ref{lem:angle_pi_over_2}.  (See Figure~\ref{fig:rightangle} for an illustration.)
  
  \begin{figure}[H]
  	\begin{center}
  		\begin{overpic}[width=0.6\textwidth]{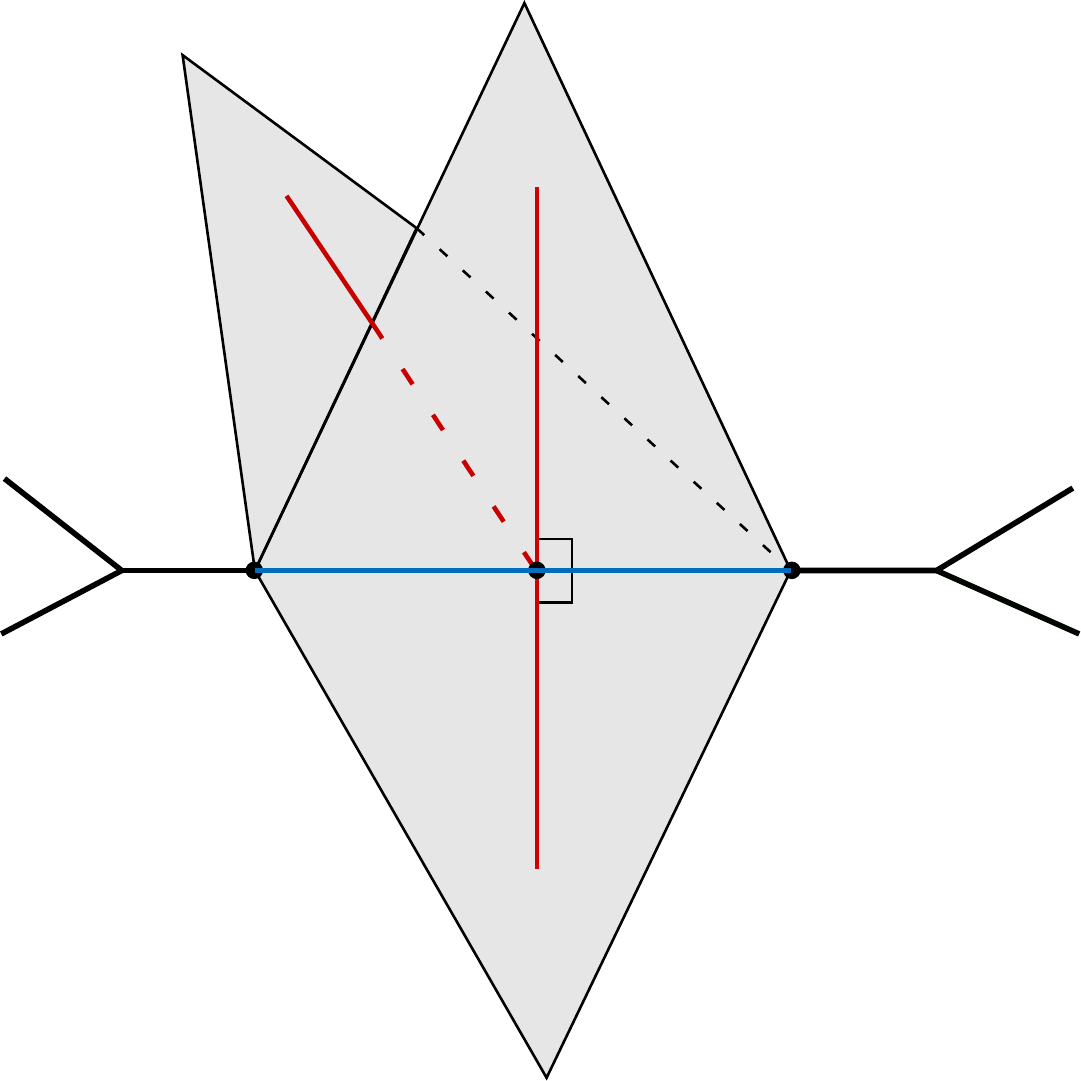}
  			\put(60,10){$\sigma$}
  			\put(60,80){$a\sigma$}
			\put(25,90){$a^2\sigma$}
  			\put(52,25){$\gamma$}
	\put(52,72){$a\gamma$}
	\put(22,72){$a^2\gamma$}
  			\put(45,43){$x$}
  			\put(90,52){$T$}
  			\put(63, 43){$e$}
  		\end{overpic}
  		\caption{The triangles $\sigma, a\sigma, a^2\sigma$ sharing the edge $e$ (in blue) of the standard tree $T$, together with the geodesic segments $\gamma, a\gamma, a^2\gamma$ (in red). Any  translate $a^k\gamma$ of $\gamma$, with $k \neq 0$,  makes an angle exactly $\pi/2$ with $e$ at $x$.}\label{fig:rightangle}
  	\end{center}
  \end{figure}

 Thus, for every non-zero integer $k$, since $\angle_x(e, \gamma) = \pi/2$ and $\angle_x(e, a^k\gamma) = \pi/2$, it follows that  $\angle_x(\gamma, a^k\gamma) \leq \pi$.  Note that the angle between $\gamma$ and $a^k\cdot \gamma$ at $x$ cannot be less than $\pi$, for otherwise the link $\mbox{Lk}(x)$ would contain an embedded loop of length strictly less than $2\pi$ by the above, contradicting the fact that $D_\Gamma$ is CAT$(0)$. We thus have that for every $k \neq 0$, $\gamma$ and $a^k \cdot \gamma$ make an angle exactly $\pi$ at $x$, hence for every distinct integers $p \neq q$,  $a^p \cdot \gamma$ and $a^q \cdot \gamma$ make an angle  $\pi$ at $x$. 
\end{proof}

\begin{lem}\label{lem:big_angle_2b}
	Let $a\in A_\Gamma$,  $T\subset D_\Gamma$, $Y \subset D_\Gamma$, and $\gamma$ be as in the statement of Lemma~\ref{lem:big-angle_2}. Suppose further that $\gamma$ meets $T$ at a type $1$ vertex $v$.  Then there exists an integer $n\geq 1$ such that the $\langle a^n \rangle$-translates of $\gamma$ all make an angle of at least $\pi$ with each other at $v$. 
\end{lem}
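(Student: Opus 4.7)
The plan is to translate the problem to the link $\link(v)$, where it becomes a question about distances in an explicit complete bipartite graph. I will in fact show that any $n \geq 1$ works.

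First, I normalise the setup. After translating by a suitable element of $A_\Gamma$, we may assume that $v$ is the standard type~$1$ vertex $\langle s \rangle$ for some standard generator $s$ of $A_\Gamma$. Since the pointwise stabiliser of $v$ is $\langle s \rangle$ and $a$ fixes $v$, we have $a = s^m$ for some nonzero $m$, and by Lemma~\ref{lem:standard_tree_power} we have $T = \Fix(a) = \Fix(s)$.

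Next, I describe $\link(v)$ directly from Definition~\ref{def:Deligne} and Theorem~\ref{thm:Charney_Davis}. The vertices of $\link(v)$ split into a type~$0$ side $\{s^k : k \in \Z\}$ and a type~$2$ side $\{\langle s, t \rangle : t \text{ adjacent to } s \text{ in } \Gamma\}$. The triangles of $D_\Gamma$ containing $v$ are precisely those with vertices $\{s^k\} \subsetneq \langle s \rangle \subsetneq \langle s, t \rangle$, indexed by pairs $(k, t)$; each such triangle has angle $\pi/2$ at $v$, contributing one edge of angular length $\pi/2$ to $\link(v)$. Hence $\link(v)$ is a complete bipartite graph with all edges of length $\pi/2$. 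Moreover, the edges of $T$ at $v$ are exactly the edges of $D_\Gamma$ from $v$ to the type~$2$ vertices, since these are precisely the simplices at $v$ fixed by $s$.

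The key step is now to locate the initial direction $\bar\gamma \in \link(v)$ of $\gamma$. Since $\gamma$ realises the distance from $T$ to $Y$, Lemma~\ref{lem:angle_pi_over_2} gives that $\bar\gamma$ lies at angular distance at least $\pi/2$ from every type~$2$ vertex of $\link(v)$. If $\bar\gamma$ lay on an edge $\{\{s^k\}, \langle s, t\rangle\}$ at distance $d < \pi/2$ from its type~$2$ endpoint, then the distance from $\bar\gamma$ to $\langle s, t \rangle$ in $\link(v)$ would be exactly $d$, a contradiction; and if $\bar\gamma$ were itself a type~$2$ vertex, that distance would be $0$. Hence $\bar\gamma$ must be a type~$0$ vertex $s^k$.

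Finally, $a = s^m$ acts on $\link(v)$ by $s^k \mapsto s^{k+m}$ on the type~$0$ side and fixes every type~$2$ vertex; so for any distinct integers $p \neq q$, the translates $a^p \bar\gamma$ and $a^q \bar\gamma$ are distinct type~$0$ vertices, which in the complete bipartite graph $\link(v)$ are at angular distance exactly $\pi$ (via any type~$2$ vertex). In the degenerate case where $s$ has no neighbour in $\Gamma$, one has $T = \{v\}$ and $\link(v)$ is a discrete set of vertices, so distinct directions at $v$ automatically make angle $\pi$. In either situation any $n \geq 1$ works: distinct $\langle a^n \rangle$-translates of $\gamma$ make angle exactly $\pi$ at $v$. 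The only non-routine step in this plan is the observation that the angle-$\pi/2$ condition from Lemma~\ref{lem:angle_pi_over_2} pins $\bar\gamma$ to the type~$0$ side of $\link(v)$.
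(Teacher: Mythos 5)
Your proof is correct and follows essentially the same route as the paper's: normalise so that $v=\langle s\rangle$ and $a$ is a power of $s$, describe $\mathrm{Lk}(v)$ as a complete bipartite graph on the type~$0$ and type~$2$ vertices with all edges of angular length $\pi/2$, and use Lemma~\ref{lem:angle_pi_over_2} to force $\bar\gamma$ to be a type~$0$ vertex. The only (minor) difference is the final step, where the paper re-runs the argument of Lemma~\ref{lem:big_angle_2a} while you read off the distance $\pi$ between distinct type~$0$ vertices directly from the bipartite structure of the link; both yield that any $n\geq 1$ works.
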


\begin{proof}
 Up to translation by an element of $A_\Gamma$, we can assume that $v$ is a vertex of the form $\langle s \rangle$ for some standard generator $s \in V(\Gamma)$, and  $a$ is a power of $s$. By construction of $D_\Gamma$, the vertices of $\mbox{Lk}(v)$ are of two types: Type $0$ vertices of the form $s^i \langle 1 \rangle$ for $i \in \Z$, and type $2$ vertices of the form $\langle s, t \rangle$ for $t\in V(\Gamma)$ adjacent to $s$. Moreover, $\mbox{Lk}(v)$ has the structure of a complete bipartite graph with respect to the colouring by type. By construction of the piecewise-Euclidean structure on $D_\Gamma$ and of the angular metric on $\mbox{Lk}(v)$, all edges of $\mbox{Lk}(v)$ have length $\pi/2$, and in particular, $\mbox{Lk}(v)$ has diameter exactly $\pi$. Note that $T \cap \mbox{Lk}(v)$ consists precisely of the type $2$ vertices of $\mbox{Lk}(v)$. In particular, since $\gamma$ makes an angle at least $\pi/2$ at $v$ with all edges of $T$ by Lemma~\ref{lem:angle_pi_over_2}, the only possibility is for $\gamma$ to correspond to a type $0$ vertex of~$\mbox{Lk}(v)$.

In particular, some initial subsegment of $\gamma$ is contained in a triangle $\sigma$ of $D_\Gamma$ containing some edge $e$ of $T$ and $\gamma$ makes an angle $\pi/2$ with $e$ at $v$. The same argument as for Lemma~\ref{lem:big_angle_2a} now allows us to conclude.  
\end{proof}

\begin{lem}\label{lem:big_angle_2c}
	Let $a\in A_\Gamma$,  $T\subset D_\Gamma$, $Y \subset D_\Gamma$, and $\gamma$ be as in the statement of Lemma~\ref{lem:big-angle_2}. Suppose further that $\gamma$ meets $T$ at a a type $2$ vertex $v$.  Then there exists an integer $n\geq 1$ such that the $\langle a^n \rangle$-translates of $\gamma$ all make an angle of at least $\pi$ with each other at $v$. 
\end{lem}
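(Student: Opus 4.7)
The plan is to translate the problem into the link $\mathrm{Lk}(v)$ and to exploit the algebraic structure of the stabiliser $A_{st} = \mathrm{Stab}(v)$.  First, one puts $a$ in standard form: after translating by an element of $A_\Gamma$ we may assume $v = \langle s,t\rangle$. Since $a$ is tree-elliptic with $v \in T$, the element $a$ fixes some edge of $T$ through $v$, so combining Lemmas~\ref{lem:standard_tree_stab} and~\ref{lem:standard_tree_power}, and conjugating by a suitable element of $A_{st}$ (which acts by isometry on $\mathrm{Lk}(v)$), we may further assume that $a = s^k$ for some $k \neq 0$ and $T = \Fix(s)$.

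Next, applying Lemma~\ref{lem:angle_pi_over_2} to the pairs $(T, Y)$ and $(T, a^n Y)$ (the segment $a^n\gamma$ realises the distance between these), the direction $\bar\gamma \in \mathrm{Lk}(v)$ of $\gamma$ and each translate $a^n\bar\gamma$ lie at angular distance at least $\pi/2$ from the discrete set $T_v \subset \mathrm{Lk}(v)$ of directions along edges of $T$ at $v$. Moreover $\bar\gamma \notin T_v$: otherwise some initial portion of $\gamma$ would lie in $T$, contradicting the minimality of $\gamma$. By Lemma~\ref{lem:standard_tree_power}, $\Fix(a) = \Fix(s) = T$, so $a$ does not fix $\bar\gamma$, and the orbit $\{a^n\bar\gamma\}_{n\in\Z}$ is infinite.

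The heart of the argument is to show that the combinatorial distance in $\mathrm{Lk}(v)$ between $\bar\gamma$ and $a^n\bar\gamma$ grows with $|n|$, so that for $n$ large enough it exceeds $2m_{st}$, giving angular distance at least $2m_{st}\cdot \frac{\pi}{2m_{st}} = \pi$.  When $m_{st}=2$, the stabiliser $A_{st}\cong\Z^2$ is abelian and $\mathrm{Lk}(v)$ is a complete bipartite graph of angular diameter exactly $\pi$ (as in Case~1 of Lemma~\ref{lem:big_angle_1}); a direct computation shows that $\bar\gamma$ and $a\bar\gamma$ share a common neighbour in $T_v$, placing them at angular distance exactly $\pi$, and $n=1$ suffices.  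When $m_{st}\geq 3$, the link is unbounded, and I would transport the problem to the Bestvina quasi-tree $\cT_{st}$ of Definition~\ref{def:quasi_tree_simplices}: the element $s$ admits a bi-infinite combinatorial axis in $\cT_{st}$ with positive translation length (Lemma~\ref{lem:garside_quasitree} and Figure~2).  Via the correspondence between vertices of $\mathrm{Lk}(v)$ and cosets of $\langle s\rangle$, $\langle t\rangle$ in $A_{st}$, this axis produces a sequence of vertices of $\mathrm{Lk}(v)$ along which $a=s^k$ translates, and combining a projection argument with the Garside orbit description of $T_v$ (Lemma~\ref{lem:garside_orbit}) yields a linear lower bound $d_{\mathrm{Lk}(v)}(\bar\gamma, a^n\bar\gamma) \geq c|n|$ for some $c>0$, from which the claim follows.

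The main obstacle is the case $m_{st}\geq 3$: establishing the quasi-isometric behaviour of the $\langle a\rangle$-orbit in $\mathrm{Lk}(v)$ when $a$ is conjugate in $A_{st}$ to a power of a standard generator. The dynamical results of \cite{VaskouAH} used in Lemma~\ref{lem:big_angle_1} cover precisely the opposite case, so one cannot invoke them directly, and the argument has to go through a careful Garside-combinatorial analysis of the local structure of $\mathrm{Lk}(v)$ via $\cT_{st}$.
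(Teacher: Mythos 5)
Your reduction to $v=v_{s,t}$, $a=s^k$, the use of Lemma~\ref{lem:angle_pi_over_2} to place $\bar\gamma$ at angular distance at least $\pi/2$ from the directions of $T$, and the $m_{st}=2$ computation all match the paper's setup and are correct. But the core of your argument for $m_{st}\geq 3$ rests on a false claim: there is no linear lower bound $d_{\mathrm{Lk}(v)}(\bar\gamma, a^n\bar\gamma)\geq c|n|$, and indeed the $\langle a\rangle$-orbit of $\bar\gamma$ is \emph{bounded} in $\mathrm{Lk}(v)$. The element $a=s^k$ fixes the edge of $D_\Gamma$ joining $v$ to the type $1$ vertex $\langle s\rangle$ (an edge of $T$), hence fixes the corresponding vertex $\bar e$ of $\mathrm{Lk}(v)$; consequently every orbit point $a^n\bar\gamma$ lies on the sphere of radius $D:=d(\bar\gamma,\bar e)$ about $\bar e$, and any two orbit points are at distance at most $2D$ via a path through $\bar e$. (Concretely, $s^{kn}\langle t\rangle$ stays adjacent to the vertex $\langle s\rangle$ in the coset graph for all $n$.) An isometry of $\mathrm{Lk}(v)$ with a global fixed point cannot have quasi-isometrically embedded orbits, so no Garside-combinatorial analysis will produce the growth you want. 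This is exactly why the tree-elliptic case cannot be handled like the vertex-elliptic case of Lemma~\ref{lem:big_angle_1}.

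What saves the statement is that the short paths between orbit points are forced to pass close to the fixed-point set of $a$ in $\mathrm{Lk}(v)$, i.e.\ through the directions of $T$, where the $\pi/2$-orthogonality of $\gamma$ can be applied \emph{twice}. The paper makes this precise by working in the quotient $\mathrm{Lk}(v)/\langle\Delta_{s,t}\rangle$ (right multiplication), where by Lemma~\ref{lem:garside_orbit} all edges of $T$ at $v$ collapse to a single vertex $\bar T$ with $d(\bar\gamma,\bar T)\geq\pi/2$, and then proving a dichotomy for a geodesic $p_{kn}$ from $\bar\gamma$ to $a^{kn}\bar\gamma$: either $p_{kn}$ passes through $\bar T$, giving length at least $\pi/2+\pi/2=\pi$ immediately, or it avoids $\bar T$, in which case its lift to the auxiliary graph $\cK_{s,t}$ avoids $\Axis(s)$ and must therefore cross at least $f(n)$ pairwise disjoint translates $s^i\Axis(t)$ (since $x$ and $a^{kn}x$ are separated by that many edges $W_{s^i}$), forcing length at least $(f(n)-1)\pi/m_{st}\geq\pi$ for $n$ large. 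Your write-up is missing this dichotomy, which is the actual content of the lemma; without it the argument does not close.
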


This final case is the most technical one, and will occupy the remainder of this section. Before proving this lemma, we introduce some new graphs that will be used during the proof. 
We first explain how the link $\mbox{Lk}(v)$ can be reconstructed from a variation of the Cayley graph $  \mathrm{Cay}(A_{st}, S)$, where $S= \{s, t\}$. 

\begin{defin}\label{def:graph of orbits}
	We denote by $\Axis(s)$ the induced subgraph of $ \mathrm{Cay}(A_{st},S)$ spanned by the powers of $s$, and  by $\Axis(t)$ the induced subgraph of $ \mathrm{Cay}(A_{st},S)$ spanned by the powers of $t$.
	We define a new graph $\cG_{s,t}$ (which we think of as an ``augmented'' Cayley graph) as follows:

	We start from the graph that is the disjoint union of all the translates of the axis of $s$ and $t$ in $\mathrm{Cay}(A_{st},S)$:
	$$\bigg(\bigsqcup_{g\in A_{st}/\langle s \rangle} g\Axis(s) \bigg) \sqcup \bigg(\bigsqcup_{g\in A_{st}/\langle t \rangle} g\Axis(t) \bigg)$$
	Edges in such translates are said to be of \textbf{type A} (for ``axis''). 
	
	Whenever two translates $g\Axis(s)$ and $h\Axis(t)$ intersect in $\mathrm{Cay}(A_{st}, S)$, they necessarily intersect along a unique vertex $w \in \mathrm{Cay}(A_{st}, S)$, since $\langle s \rangle \cap \langle t \rangle = \{1\}$. 
	We then add a new edge of $\cG_{s,t}$  between the corresponding vertices $w\in g\Axis(s)$ and $w\in h\Axis(t)$. 
	Such edges are said to be of \textbf{type I} (for ``intersection'').
\end{defin}

By construction, the graph obtained from the augmented graph $\cG_{s,t}$ by collapsing to a point each of the type I edges  is isomorphic to the Cayley graph $\mathrm{Cay}(A_{st}, S)$. We denote by 
$$pr_{I}: \cG_{s,t} \rightarrow \mathrm{Cay}(A_{st}, S)$$ this projection  map.

Moreover, 
the barycentric subdivision of the graph obtained from $\cG_{s,t}$ by collapsing each type A edge is isomorphic to the link $\link(v_{st})$, via the description given in Lemma~\ref{lem:link_dihedral_cocompact}. We denote by $$pr_{A}: \cG_{s,t}\rightarrow \link(v_{st})$$ this projection map. (Note that this projection is not simplicial.)

\medskip

We can summarise this information via the following diagram:

 \[\begin{tikzcd}
{\mathrm{Cay}(A_{st}, S)} &&& {\cG_{s,t}} &&& {\mathrm{Lk}(v)}
\arrow["{pr_{\mathrm{I}}}", from=1-4, to=1-1]
\arrow["{pr_{\mathrm{A}}}"', from=1-4, to=1-7]
\end{tikzcd}\]

\medskip

Recall that conjugation by the Garside element $\Delta_{s,t}$ permutes the generators $s, t$ of $A_{st}$ by Lemma~\ref{lem:garside_generators}. It follows that there is an action of $\langle \Delta_{s,t}\rangle$ by multiplication on the \textit{right} on each of the graphs $\mathrm{Cay}(A_{st}, S)$, $\cG_{s,t}$, and $\link(v)$ (where in the latter case, $\link(v)$ is thought of as a graph of cosets of $A_{s,t}$ as in Lemma~\ref{lem:link_dihedral_cocompact}). Moreover, the projections $pr_A$ and $pr_I$ are $\langle \Delta_{s,t} \rangle$-equivariant. Let us consider the various quotients: 

\medskip

Recall from Lemma~\ref{lem:garside_quasitree} that the quotient $\mbox{Cay}(A_{s,t}, M)/\langle \Delta_{s,t} \rangle$ is precisely the quasi-tree $\cT_{s,t}$. Thus, the quotient  $\mbox{Cay}(A_{s,t}, S)/\langle \Delta_{s,t} \rangle$  is the subgraph of $\cT_{s,t}$ obtained by removing the edges of $\cT_{s,t}$ labelled by an atom on more than one generator. In particular, $\mbox{Cay}(A_{s,t}, S)/\langle \Delta_{s,t} \rangle$ is also a quasi-tree (and is actually a tree, by uniqueness of the Garside normal form).

\medskip

The quotient graph $ \cG_{s,t} /\langle \Delta_{s,t} \rangle$ can be obtained via a similar construction as the graph $\cG_{s,t}$. Indeed, we define a new graph $\cK_{s,t}$ as follows: Denote by $\Axis(s)$ and $\Axis(t)$ the induced subgraphs of $\cT_{s,t}$ spanned by the elements of $\langle s \rangle$ and $\langle t \rangle$ respectively. We start  from the disjoint union of all the subgraphs of the form $g\Axis(s)$ and $g \Axis(t)$, for $g \in A_{s, t}$, and whenever two axes of $\cT_{s,t}$ of the form $g\Axis(s)$ and $h\Axis(t)$ intersect along a (necessarily unique)  element $x$, we add a new edge of $\cK_{s,t}$ between the corresponding vertices $x\in g\Axis(s)$ and $x\in h\Axis(t)$. We define similarly edges of type A and of type I. This graph $\cK_{s,t}$ is isomorphic to the quotient $\cG_{s,t}/\langle \Delta_{s,t} \rangle$.

 As before, there is  a projection $pr_{\mathrm{I}}: \cK_{s,t} \rightarrow \cT_{s,t}$, obtained by collapsing each edge of $\cK_{s,t}$ that is of type I. Similarly, there is a projection $pr_A: \cK_{s,t} \rightarrow \link(v)/\langle \Delta_{s,t} \rangle$ obtained by collapsing each type A edge.

\medskip

  All these graphs fit into the following commutative diagram:

 \[\begin{tikzcd}
 {\mathrm{Cay}(A_{s,t}, S)} &&& {\cG_{s,t}} &&& {\mathrm{Lk}(v)} \\
 \\
 \cT_{s,t} &&& {\cK_{s,t}} &&& {\mathrm{Lk}(v)/\langle \Delta_{s,t} \rangle}
 \arrow["{pr_{\mathrm{I}}}", from=1-4, to=1-1]
 \arrow["{pr_{\mathrm{A}}}"', from=1-4, to=1-7]
 \arrow["{}", from=1-1, to=3-1]
 \arrow["{}", from=1-4, to=3-4]
 \arrow["{}", from=1-7, to=3-7]
 \arrow["{pr_{\mathrm{I}}}", from=3-4, to=3-1]
 \arrow["{pr_{\mathrm{A}}}"', from=3-4, to=3-7]
 \end{tikzcd}\]
 
 where the vertical arrows correspond to quotienting by the action of $\langle \Delta_{s,t} \rangle$ on the right.
 
 \begin{lem}\label{lem:separating_edge}
 	The graph $\cK_{s,t}$ is a quasi-tree. Moreover, for every vertex $v$ of $\cT_{s,t}$, the preimage $W_v\coloneqq pr_I^{-1}(v)$ is a single edge that separates $\cK_{s,t}$ into at least two connected components. 
 \end{lem}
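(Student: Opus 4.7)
The first assertion that $W_v$ is a single edge follows directly from the construction. A vertex $v$ of $\cT_{s,t}$ is represented by some $w \in A_{s,t}$ and lies on exactly one left coset $w\langle s\rangle$ and one left coset $w\langle t\rangle$. Since $\langle s\rangle \cap \langle t\rangle = \{1\}$ in $A_{s,t}$, the two corresponding vertices of $\cG_{s,t}$ at $w$ are distinct, and they are joined by a single type I edge by construction; this edge descends to $W_v$ in the quotient $\cK_{s,t}$.

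For the quasi-tree property, my plan is to exploit the projection $pr_{\mathrm{I}}\colon \cK_{s,t} \to \cT_{s,t}$. Its fibers are uniformly bounded---a single edge $W_v$ over each vertex $v$, and a single point over each interior point of an edge---and $pr_{\mathrm{I}}$ is $1$-Lipschitz, so it is a quasi-isometry onto its image. Since $\cT_{s,t}$ is a quasi-tree with the tree-of-simplices structure provided by Lemma~\ref{lem:garside_quasitree}, this image is a quasi-tree, and consequently so is $\cK_{s,t}$.

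For the separation claim, my strategy is to exploit the fact that distinct maximal simplices of $\cT_{s,t}$ share at most a single vertex, so each vertex of $\cT_{s,t}$ acts as a cut vertex for the underlying combinatorial structure. I would show that any hypothetical path in $\cK_{s,t} \setminus W_v$ joining the two endpoints of $W_v$ projects via $pr_{\mathrm{I}}$ to a closed walk at $v$ in $\cT_{s,t}$; the tree-of-simplices structure would then force this walk to be supported in a single maximal simplex containing $v$. A local analysis within such a simplex, together with a careful tracking of how the $\langle\Delta_{s,t}\rangle$-action acts on axes, should yield the required contradiction and establish that $\cK_{s,t} \setminus W_v$ is disconnected.

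The main obstacle lies in this local analysis, particularly when $m_{s,t}$ is odd. In that regime $\Delta_{s,t}$ swaps $\langle s\rangle$- and $\langle t\rangle$-axes, so axes of different types are identified as single orbits in $\cK_{s,t}$, and a type A edge in $\cG_{s,t}$ can connect orbits whose underlying axes are nominally of different type. Identifying exactly which axis orbits contain which vertices of $\cG_{s,t}$, and verifying that the ``cut vertex'' behavior of $v$ in $\cT_{s,t}$ genuinely lifts to a bridge property of $W_v$ in $\cK_{s,t}$, is the delicate step; the description of the $\Delta_{s,t}$-action from Lemma~\ref{lem:garside_generators} will be essential here.
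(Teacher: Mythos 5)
Your treatment of the first two assertions is essentially the paper's: the uniqueness of the type I edge over $v$ comes from the fact that distinct translates of $\Axis(s)$ (resp.\ of $\Axis(t)$) are disjoint, so each vertex of $\cT_{s,t}$ lies on exactly one axis of each type; and the quasi-tree property follows because $pr_I$ has uniformly bounded fibres and is therefore a quasi-isometry onto the quasi-tree $\cT_{s,t}$. Both of these are correct.

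The separation claim is where there is a genuine gap, and also where you have made the problem much harder than it is. First, the statement you are trying to prove --- that the two \emph{endpoints} of $W_v$ lie in different components of $\cK_{s,t}\setminus W_v$, i.e.\ that $W_v$ is a bridge --- is stronger than what the lemma asserts and than what is used later (in the proof of Lemma~\ref{lem:big_angle_2c} one only needs that any path between points lying over different components of $\cT_{s,t}\setminus\{v\}$ must cross the closed edge $W_v$). The intended argument is the soft one: $\cT_{s,t}$ is a tree of simplices, so $v$ is a cut vertex; since $W_v$ is the \emph{entire} fibre $pr_I^{-1}(v)$, the projection $pr_I$ restricts to a graph map $\cK_{s,t}\setminus W_v\to\cT_{s,t}\setminus\{v\}$, and points of $\cK_{s,t}\setminus W_v$ lying over distinct components of $\cT_{s,t}\setminus\{v\}$ cannot be joined by a path avoiding $W_v$. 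No local analysis in a simplex, and no case distinction on the parity of $m_{s,t}$, is needed. Second, your intermediate claim is false as stated: a closed walk at $v$ in $\cT_{s,t}$ whose interior avoids $v$ is confined to a single connected \emph{component} of $\cT_{s,t}\setminus\{v\}$, not to a single maximal simplex containing $v$ --- such a component contains infinitely many maximal simplices, and indeed (for instance) the vertices $hs^{-1}$ and $ht$ lie in a common maximal simplex $hs^{-1}\cdot\{e,s,st,\dots\}$ and hence in a common component, so the projected walk can leave along $h\Axis(s)$ and return along $h\Axis(t)$ without ever being trapped in one simplex. Ruling out such configurations requires tracking which edges of $\cT_{s,t}$ actually lift to $\cK_{s,t}$ (only the $s^{\pm1}$- and $t^{\pm1}$-labelled ones do), which is exactly the ``delicate step'' you acknowledge but do not carry out. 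As written, the separation claim is therefore not established; replacing your strategy by the cut-vertex/projection argument above closes the gap.
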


 \begin{proof}
 	Let $v$ be a vertex of $\cT_{s,t}$ corresponding to the class of the element $h \in A_{st}$. Since any two axes of $\cT_{s,t}$ of the form $g\mathrm{Axis}(s)$ are disjoint, and similarly any two axes of the form $g\mathrm{Axis}(s)$ are disjoint, it follows that $pr_I^{-1}(v)$ is a single edge, namely the edge of $\cK_{s,t}$ joining the axes $h\mathrm{Axis}(s)$ and $h\mathrm{Axis}(t)$. Thus, the fibres of the projection $pr_I: \cK_{s,t} \rightarrow \cT_{s,t}$ are non-empty and uniformly bounded, and it follows that $pr_I$ is a quasi-isometry. Since $\cT_{s,t}$ is a quasi-tree by Lemma~\ref{lem:garside_quasitree}, so is~$\cK_{s,t}$.
 	
 	Moreover, since $\cT_{s,t}$ is a tree of simplices by Lemma~\ref{lem:garside_quasitree},  $v$ is cut-vertex of $\cT_{s,t}$, hence its preimage $W_v$ separates $\cK_{s,t}$. 
 \end{proof}

We are now ready to prove Lemma~\ref{lem:big_angle_2c}:

\begin{proof}[Proof of Lemma~\ref{lem:big_angle_2c}] Up to the action of $A_\Gamma$, we can assume that $v$ is the type $2$ vertex $v_{s,t}$ and $a$ is a power of $s$.
	Fix an integer $n \geq 1$, which will be determined later. Let $k >0$ be a  positive integer, and let $p_{kn}$ be a geodesic path in $\mbox{Lk}(v)/\langle \Delta_{s,t}\rangle$ between $\bar \gamma$ and $a^{kn} \bar \gamma$. 	To show that there exists an integer $n \geq 1$ such that any two distinct  $\langle a^n \rangle$-translates of $\gamma$ make an angle of at least $\pi$ at $v$, it is enough to show that in the quotient graph $\mbox{Lk}(v)/\langle \Delta_{s,t}\rangle$, any two distinct  $\langle a^n \rangle$-translates of the point $\bar \gamma$ are at distance at least $\pi$, which we now prove.

	By Lemma~\ref{lem:garside_orbit}, all vertices of $T\cap \link(v)$ are in the same $\langle \Delta_{s,t} \rangle$-orbit (for the multiplication on the right) as the vertex corresponding to the coset $\langle s \rangle$ (since $a$ is a power of $s$ by assumption). In particular, all these edges of $T$ yield a unique vertex in $\mbox{Lk}(v)/\langle \Delta_{s,t}\rangle$, which we will denote as $\bar T$. Note that the condition that $\gamma$ makes an angle at least $\pi/2$ with every edge of $T$ containing $v$ is equivalent to the following:
	$$d(\bar \gamma, \bar T)\geq \frac{\pi}{2}$$
	where we denote by $d$ the metric on $\mbox{Lk}(v)/\langle \Delta_{s,t}\rangle$ induced from the angular metric on $\link(v)$.

	We consider two cases:

\medskip 
	
	\textbf{Case 1:} Assume that $p_{kn}$ goes through the vertex $\bar T$.  Since $p_{kn}$ is a geodesic we have:
	$$d(\bar \gamma, a^{nk}\bar \gamma) = d(\bar \gamma, \bar T) + d(\bar T, a^{nk}\bar \gamma) = d(\bar \gamma, \bar T) + d(a^{nk}\bar T, a^{nk}\bar \gamma) \geq \frac{\pi}{2} + \frac{\pi}{2} = \pi$$

\begin{figure}\label{fig:link2}
	\begin{center}
		\begin{overpic}[width=0.9\textwidth]{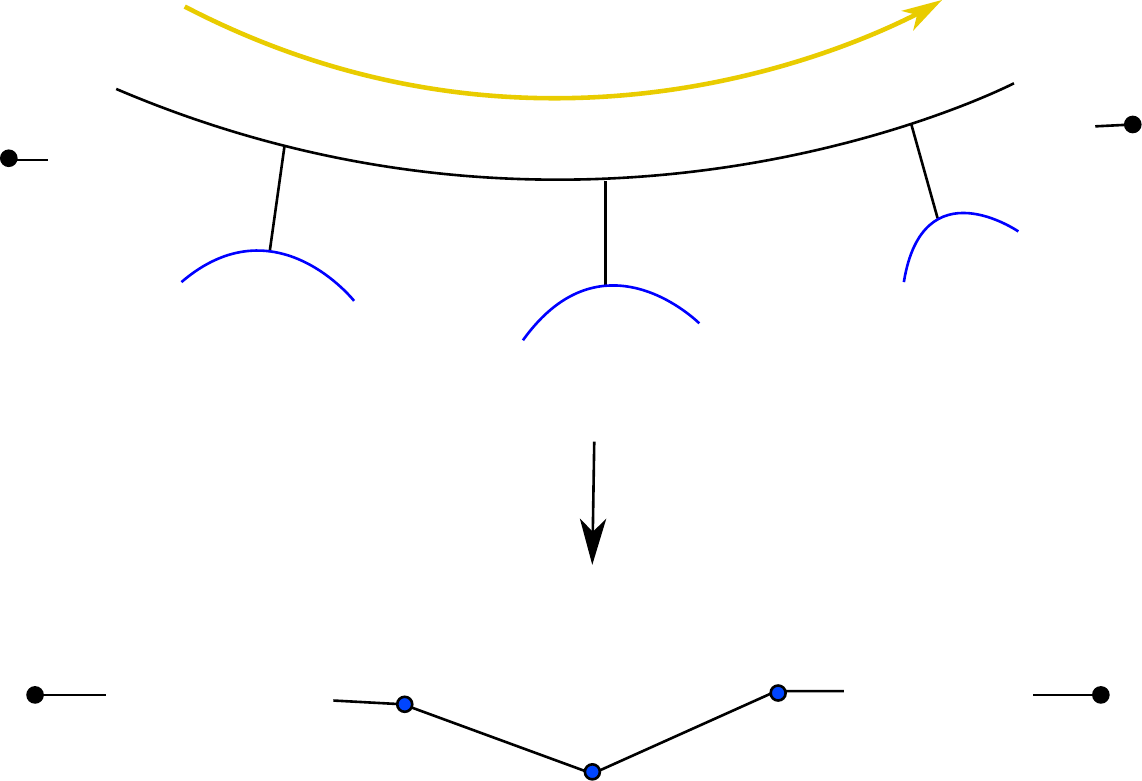}
			\put(0,50){$x$}
			\put(100,53){$a^{kn}x$}
			\put(30,64){$\mathrm{Axis(s)}$}
			\put(3,10){$\bar \gamma$}
			\put(15,40){$s^{i-1}\mathrm{Axis(t)}$}
			\put(50, 36){$s^{i}\mathrm{Axis(t)}$}
			\put(82, 43){$s^{i+1}\mathrm{Axis(t)}$}
			\put(94,10){$a^{kn}\bar \gamma$}
			\put(53,25){$pr_A$}
			\put(18,7){$\cdots $}
			\put(7,53.5){$\cdots $}
			\put(90,56.5){$\cdots $}
		\end{overpic}
		\caption{A portion of the path $\widetilde{p}_{kn}$ in $\cK_{st}$ between $x$ and $a^{kn}x$. Below, its projection $p_{kn}$ in $\mathrm{Lk}(v)/\langle \Delta_{st} \rangle$ going through the vertices corresponding to the axes $s^{i-1}\mathrm{Axis}(t)$, $s^{i}\mathrm{Axis}(t)$, and~$s^{i+1}\mathrm{Axis}(t)$.}
	\end{center}
\end{figure}

	\textbf{Case 2:} Let us now assume that we are given two integers $n$ and $k \neq 0$ such that $p_{kn}$ does not go through the vertex $\bar T$. Since the fibres of the projection $pr_A: \cK_{s,t} \rightarrow \link(v)/\langle \Delta_{s,t} \rangle$ are non-empty and connected, we can  lift the path $p_{kn}$ to a path $\widetilde{p}_{kn}$ of $\cK_{s,t}$ between a vertex $x\in \cK_{s,t}$ and its translate $a^{kn}\cdot x$. 
	 Let us denote by $W_{s^i}$ the edge $pr_I^{-1}(s^i)$. Since $a$ acts loxodromically on the quasi-tree  $\cK_{s,t}$, and by translation on the axis $\mathrm{Axis}(s)$ (since $a$ is a power of $s$ by assumption), there exists a function $f: \bbN \rightarrow \bbN$ with $\lim_{n \infty} f(n) = \infty$ such that $x$ and $a^{kn}\cdot x$ are separated by at least $f(n)$ edges of the form $W_{s^i}$. Recall that each edge $W_{s^i}$ connects $\Axis(s)$ with $s^{i}\Axis(t)$. Since $p_{kn}$ does not go through the vertex $\bar T$, it follows that $\widetilde{p}_{kn}$ does not meet the axis $\mathrm{Axis}(s)$ of $\cK_{st}$. Thus, since $x$ and $a^{kn}\cdot x$ are separated by at least $f(n)$ edges of the form $W_{s^i}$, it follows that $\widetilde{p}_{kn}$ meets at least $f(n)$ subgraphs of the form  $s^{i}\Axis(t)$. Since any two such subgraphs of $\cK_{s,t}$ are disjoint, it follows that 
	the path $p_{kn}$ contains at least $f(n)-1$ distinct edges of~$\mbox{Lk}(v)/\langle \Delta_{s,t}\rangle$.

	Since edges of $\mbox{Lk}(v)/\langle \Delta_{s,t}\rangle$ all have length $\pi/m_{st}$ by construction of the piecewise-Euclidean structure and the angular metric on $\mbox{Lk}(v)$, it follows that 
	$$ d(\bar \gamma, a^{nk}\bar \gamma)  \geq (f(n)-1)\frac{\pi}{m_{st}} \geq \pi$$
where the last inequality holds by choosing $n$ larger than some constant $n_0$ that depends only on $a$. 

\medskip

 It thus follows that if $n \geq n_0$ and $k \neq 0$, then any two geodesic segments in the $\langle a^n \rangle$-orbit  of $\gamma$ make an angle of at least $\pi$ with each other at $v$, which concludes the proof. 
\end{proof}

\begin{proof}[Proof of Lemma~\ref{lem:big-angle_2}]: 
		This now follows from  Lemmas~\ref{lem:big_angle_2a}, ~\ref{lem:big_angle_2b} and~\ref{lem:big_angle_2c}. 
\end{proof}

\begin{proof}[Proof of Lemma~\ref{lem:big_angle}]
	This now follows from  Lemmas~\ref{lem:big_angle_1}, and~\ref{lem:big-angle_2}. 
\end{proof}

\section{The Tits Alternative} \label{sec:Tits}

The goal of this section is to prove Theorem~\ref{thm:Tits_dim2}. To that end, we will use the following classification of the subgroup of $A_\Gamma$:

\begin{prop}\label{prop:subgroup_dichotomy}
	Let $H$ be a subgroup of $A_\Gamma$. Then one of the following holds: 
	\begin{itemize}
		\item $H$ contains two elliptic elements with disjoint fixed-point sets,
		\item $H$ is purely loxodromic,
		\item $H$ is contained in a parabolic subgroup of $A_\Gamma$, or
		\item $H$ stabilises some standard tree of $D_\Gamma$.
	\end{itemize}
\end{prop}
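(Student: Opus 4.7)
The plan is to perform a case analysis on the types of elliptic elements that $H$ contains, using the classification from Lemma-Definition~\ref{lem:dichotomy_elliptic}. Assume that $H$ is not purely loxodromic and that any two elliptic elements of $H$ have intersecting fixed-point sets, since otherwise the first or second conclusion of the statement holds. The goal is to show that $H$ is then contained in a parabolic subgroup or stabilises a standard tree, according to whether or not $H$ contains a vertex-elliptic element.

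First I would handle the case where $H$ contains a vertex-elliptic element $g_0$, with $\Fix(g_0) = \{v\}$ for a single type-$2$ vertex $v$. The intersection hypothesis forces $v \in \Fix(g)$ for every other elliptic $g \in H$. For an arbitrary $h \in H$, the conjugate $hg_0h^{-1}$ is vertex-elliptic with $\Fix = \{h\cdot v\}$, and applying the same observation gives $h\cdot v = v$. Therefore $H \subseteq \mathrm{Stab}(v)$, which is a parabolic subgroup.

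Otherwise every elliptic element of $H$ is tree-elliptic, and the core step is to show that all their fixed trees coincide. Suppose for contradiction that $g_0, g_1 \in H$ are tree-elliptic with distinct fixed trees $T_0 \neq T_1$. By Lemma~\ref{lem:standard_tree_intersect} combined with the intersection hypothesis, $T_0 \cap T_1 = \{v\}$ for a single type-$2$ vertex $v$, and after conjugating we may assume $\mathrm{Stab}(v) = A_{st}$. I would then aim to produce a vertex-elliptic element inside $\langle g_0, g_1\rangle$, which would contradict the case hypothesis. One begins by observing that the tree-elliptic elements of $A_{st}$ in $D_\Gamma$ are exactly the $A_{st}$-conjugates of non-trivial powers of $s$ or $t$: any standard tree through $v$ is an $A_{st}$-translate of $\Fix(s)$ or $\Fix(t)$, since its type-$1$ vertex adjacent to $v$ corresponds to a coset of $\langle s\rangle$ or $\langle t\rangle$ in $A_{st}$. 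Under the action of $A_{st}$ on the quasi-tree $\cT_{st}$ from Definition~\ref{def:quasi_tree_simplices}, such elements are loxodromic with combinatorial axes that are $A_{st}$-translates of $\Axis(s)$ or $\Axis(t)$. Moreover, $g_0$ and $g_1$ must act on $\cT_{st}$ with distinct axes: if they shared one, they would stabilise it, which together with Lemma~\ref{lem:standard_tree_power} and the structure of axis stabilisers in $\cT_{st}$ would force $T_0 = T_1$. A ping-pong argument on $\cT_{st}$ then yields, for $N$ large enough, a rank-$2$ free subgroup $\langle g_0^N, g_1^N\rangle$ containing an element (such as $g_0^N g_1^N$) whose axis in $\cT_{st}$ is neither an $A_{st}$-translate of $\Axis(s)$ nor of $\Axis(t)$. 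By the characterisation above such an element is not an $A_{st}$-conjugate of a power of $s$ or $t$, hence is not tree-elliptic in $D_\Gamma$; since it fixes $v$, it is vertex-elliptic, giving the desired contradiction.

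Once the common fixed tree $T$ of all elements of $H_{\mathrm{ell}}$ is established, the remainder is routine: for any $h \in H$ and tree-elliptic $g \in H$, the conjugate $hgh^{-1}$ is tree-elliptic with fixed tree $h\cdot T$, which by the common-tree property is equal to $T$ itself; hence $T \subseteq h\cdot T$, and Lemma~\ref{lem:standard_tree_intersect} forces $h\cdot T = T$, so $h \in \mathrm{Stab}(T)$ and $H \subseteq \mathrm{Stab}(T)$. The hardest part of the argument is the ping-pong step, which requires enough combinatorial control over the axes in $\cT_{st}$ to certify that the axis produced by the dynamics is genuinely new, and in particular is not of the form $\alpha\Axis(s)$ or $\alpha\Axis(t)$ for any $\alpha \in A_{st}$.
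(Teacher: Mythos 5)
Your overall architecture is sound and in fact mirrors the paper's: reduce to the case where all elliptic elements of $H$ have pairwise intersecting fixed-point sets, handle the presence of a vertex-elliptic element by showing every $h\in H$ fixes its vertex (this simultaneously covers the mixed case, as in Lemmas~\ref{lem:loxodromic_vertex_elliptic} and~\ref{lem:elliptic_dichotomy}), and otherwise show all tree-elliptic elements share a tree $T$ and that $H$ stabilises $T$. Those parts are correct. The problem is the core step: producing a vertex-elliptic element of $\langle g_0,g_1\rangle$ from two tree-elliptic elements whose distinct trees meet at a type~$2$ vertex $v$ (the paper's Lemma~\ref{lem:distinct_tree_elliptic}).

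Your proposed route through the quasi-tree $\cT_{st}$ has a genuine gap exactly where you flag it. Ping-pong (granting the unproved preliminary claims that $g_0,g_1$ are loxodromic on $\cT_{st}$ with quasi-axes having distinct endpoint pairs, not merely distinct axes) only delivers freeness of $\langle g_0^N,g_1^N\rangle$; it does not certify that the quasi-axis of $g_0^Ng_1^N$ fails to be an $A_{st}$-translate of $\Axis(s)$ or $\Axis(t)$. A priori the product of two conjugates of powers of generators could again be such a conjugate, and ruling this out from the coarse dynamics would require substantial extra combinatorial control over $\cT_{st}$ (e.g.\ that members of the family $\{\alpha\Axis(s),\alpha\Axis(t)\}$ pairwise share at most one vertex \emph{and} that coarse fellow-travelling of combinatorial axes forces actual coincidence), none of which you supply. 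The paper avoids all of this with a one-line algebraic argument: writing $g_0=gx^ng^{-1}$, $g_1=hy^mh^{-1}$ with $g,h\in\langle s,t\rangle$, the element $g_0^mg_1^{-n}$ fixes $v$, and if it were tree-elliptic it would equal $kz^{\ell}k^{-1}$; applying the homomorphism $A_{st}\to\Z$ sending each generator to $1$ gives $\ell=0$, hence $g_0^m=g_1^n$ and $\Fix(g_0)=\Fix(g_1)$ by Lemma~\ref{lem:standard_tree_power}, a contradiction. I would recommend replacing your ping-pong step with this computation (or fully developing the combinatorial control over axes in $\cT_{st}$ that your argument silently assumes).
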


We first show how Proposition~\ref{prop:subgroup_dichotomy} implies Theorem~\ref{thm:Tits_dim2}:

\begin{proof}[Proof of Theorem~\ref{thm:Tits_dim2}]
	By Proposition~\ref{prop:subgroup_dichotomy}, we have four cases to consider. If $H$ contains two elliptic elements with disjoint fixed-point sets, $H$ contains a non-abelian free subgroup by Proposition~C. If $H$ is purely loxdromic, $H$ satisfies the Tits Alternative by \cite[Corollary~C]{OP}. If $H$ is contained in a parabolic subgroup, then by Lemmas~\ref{lem:type_vertices} and~\ref{lem:virtually splits} it is contained in a group that is either trivial, infinite cyclic, or virtually isomorphic to $\Z \times F_k$. Since such groups satisfy the Tits Alternative, so does $H$. Finally, if $H$ stabilises some standard tree of $D_\Gamma$, then $H$ is contained in a subgroup  isomorphic to $\Z \times F_k$ by Lemma~\ref{lem:standard_tree_stab}. Since such a group satisfy the Tits Alternative, so does $H$. 
\end{proof}

The rest of this section is thus devoted to the proof of Proposition~\ref{prop:subgroup_dichotomy}.

\begin{defin}
	We say that a subgroup $H$ of $A_\Gamma$ is \textbf{purely elliptic} if all elements of $H$ act elliptically on $D_\Gamma$, \textbf{purely loxodromic} if all non-trivial elements of $H$ act loxodromically on $D_\Gamma$, and \textbf{mixed} otherwise.
\end{defin}

We start by focusing on mixed subgroups of $A_\Gamma$.

\begin{lem}\label{lem:mixed_dichotomy}
	A mixed subgroup of $A_\Gamma$ either contains two elliptic elements with disjoint fixed-point sets or stabilises some standard tree of $D_\Gamma$.
\end{lem}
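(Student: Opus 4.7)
The plan is the following. Suppose $H$ is a mixed subgroup of $A_\Gamma$ with the property that no two elliptic elements of $H$ have disjoint fixed-point sets; the goal is to show $H$ stabilises some standard tree of $D_\Gamma$.

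First, I would argue that every elliptic element of $H$ must be tree-elliptic, in the sense of Lemma~\ref{lem:dichotomy_elliptic}. Indeed, suppose $a \in H$ were vertex-elliptic with $\Fix(a)=\{v\}$, and let $g\in H$ be loxodromic. Then $gag^{-1}$ is also vertex-elliptic, with $\Fix(gag^{-1})=\{gv\}$; since a loxodromic isometry has no fixed point in $D_\Gamma$ we have $gv\neq v$, so $a$ and $gag^{-1}$ would be two elliptic elements of $H$ with disjoint fixed-point sets, contradicting the hypothesis.

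Next, fix any elliptic $a \in H$ and set $T:=\Fix(a)$. The goal reduces to showing $hT=T$ for every $h \in H$, as this gives $H\subseteq \mathrm{Stab}(T)$. For any $h\in H$, the element $hah^{-1}$ lies in $H$ and is tree-elliptic with fixed tree $hT$, so the hypothesis yields $T\cap hT\neq\emptyset$, and Lemma~\ref{lem:standard_tree_intersect} forces either $hT=T$ or $T\cap hT$ to be a single type-$2$ vertex. To rule out the latter possibility for a loxodromic $g\in H$, apply the dichotomy to each conjugate $g^n a g^{-n} \in H$: for every $n\neq 0$, either $g^n T=T$ or $T \cap g^n T$ is a single vertex. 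If some minimal $N\geq 2$ satisfies $g^N T = T$, the translates $T, gT, \dots, g^{N-1}T$ are pairwise distinct, and the finite set $\mathcal{S}=\{g^iT\cap g^jT : 0\leq i<j \leq N-1\}$ of their pairwise intersection vertices is $\langle g\rangle$-invariant. Its circumcenter in the CAT$(0)$ space $D_\Gamma$ is then a point fixed by $g$, contradicting $g$ being loxodromic. If instead $g^n T\neq T$ for all $n\neq 0$, one must rule out this infinite-orbit scenario: this is the main obstacle in the proof, and I would handle it by combining the dynamics of $g$ along its axis with the local dihedral structure at each intersection vertex (Lemmas~\ref{lem:link_dihedral_cocompact} and~\ref{lem:garside_orbit}) to produce some $n$ for which $g^n T \cap T = \emptyset$, thereby giving two elliptic conjugates of $a$ in $H$ with disjoint fixed-point sets and contradicting the hypothesis.

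Once every loxodromic in $H$ is known to stabilise $T$, consider an elliptic $h\in H$ with $hT\neq T$, so that $T\cap hT=\{w\}$ for a single type-$2$ vertex. Writing $T_h:=\Fix(h)$, we have $T_h\neq T$ (else $hT=T$), and by hypothesis $T_h\cap T\neq\emptyset$, so this intersection is a single vertex $w'$; since $h$ fixes $w'$ and $w'\in T$ we get $w'=hw'\in hT$, whence $w'\in T\cap hT=\{w\}$, so $w'=w$ and $h$ fixes $w$. Now pick any loxodromic $g\in H$: by the previous step $g\in \mathrm{Stab}(T)$, and since $h^{-1}gh\in H$ is also loxodromic, $h^{-1}gh$ stabilises $T$ as well, giving $g(hT)=hT$. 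Hence $g\in C(a)\cap C(hah^{-1})$ by Lemma~\ref{lem:standard_tree_stab}, so $g$ commutes with both $a$ and $hah^{-1}$. Consequently $gw$ is fixed by both $a$ and $hah^{-1}$, so $gw\in T\cap hT=\{w\}$, forcing $g$ to fix $w$ and contradicting $g$ being loxodromic. Therefore $hT=T$ for all $h\in H$, and $H$ stabilises the standard tree $T$.
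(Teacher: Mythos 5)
Your Step 1 (every elliptic element of $H$ must be tree-elliptic, via conjugation by a loxodromic) is correct and is exactly the paper's Lemma~\ref{lem:loxodromic_vertex_elliptic}. Your final paragraph is also sound. But the middle of your argument contains a genuine gap, which you acknowledge yourself: in the case where $g^nT \neq T$ for all $n\neq 0$ and each $T\cap g^nT$ is a single type-$2$ vertex, you only sketch a hoped-for argument ("combining the dynamics of $g$ along its axis with the local dihedral structure") to produce $n$ with $g^nT\cap T=\emptyset$. Nothing in the paper's toolbox makes this direct dynamical approach routine, and as written the proof is incomplete at precisely the point you identify as the main obstacle.

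The idea you are missing is the paper's Lemma~\ref{lem:distinct_tree_elliptic}, which makes the orbit analysis unnecessary. If $a, hah^{-1}\in H$ are tree-elliptic with distinct trees $T$ and $hT$ meeting at a type-$2$ vertex $v$, one may assume $\mathrm{Stab}(v)=\langle s,t\rangle$ and write $a=g x^n g^{-1}$, $hah^{-1}=k y^m k^{-1}$ with $g,k\in\langle s,t\rangle$, $x,y\in\{s,t\}$. The element $a^m (hah^{-1})^{-n}\in H$ fixes $v$; if it were tree-elliptic it would equal some $\ell$-th power of a conjugate of a generator, and applying the homomorphism $A_{st}\to\Z$ sending $s,t\mapsto 1$ forces $\ell=0$, hence $a^m=(hah^{-1})^n$ and, by Lemma~\ref{lem:standard_tree_power}, $T=hT$, a contradiction. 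So $H$ would contain a vertex-elliptic element, contradicting your own Step 1. Therefore $hT=T$ for \emph{every} $h\in H$ at once (loxodromic or elliptic), and $H$ stabilises $T$. This replaces both your circumcentre argument and the unresolved infinite-orbit case, and also renders your third paragraph unnecessary.
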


We start with a few intermediate results:

\begin{lem}\label{lem:loxodromic_vertex_elliptic}
	Let $H$ be a subgroup of $A_\Gamma$. If $H$ contains a vertex-ellipitic element $a$  and a loxodromic element $b$, then $H$ contains two elliptic elements with disjoint fixed-point sets.
\end{lem}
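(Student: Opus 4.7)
The plan is to use the loxodromic element $b$ to produce a conjugate of $a$ whose fixed-point set is disjoint from $\mathrm{Fix}(a)$. First, since $a$ is vertex-elliptic, Lemma-Definition~\ref{lem:dichotomy_elliptic} tells us that $\mathrm{Fix}(a)$ consists of a single type~$2$ vertex $v \in D_\Gamma$. Second, since $b$ acts loxodromically on $D_\Gamma$, it fixes no point of $D_\Gamma$ at all; in particular $bv \neq v$.

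Now consider the conjugate $bab^{-1} \in H$. Since conjugation is a group automorphism, $bab^{-1}$ is again elliptic (it is conjugate to the elliptic element $a$), and its fixed-point set is precisely $b \cdot \mathrm{Fix}(a) = \{bv\}$. Since $bv \neq v$, the two elliptic elements $a, bab^{-1} \in H$ have disjoint fixed-point sets, which is the desired conclusion.

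There is no real obstacle here: the entire argument rests on the fact that vertex-elliptic elements have point-like fixed sets together with the elementary observation that a loxodromic isometry of a CAT(0) space moves every vertex. The only thing to double-check in writing up the argument is that we are using the correct convention that loxodromic elements of $\mathrm{Isom}(D_\Gamma)$ have empty fixed-point set, which is standard and follows from the semisimplicity of the action (Bridson's theorem, already invoked in the paper before Lemma-Definition~\ref{lem:dichotomy_elliptic}).
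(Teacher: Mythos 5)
Your proposal is correct and is essentially identical to the paper's own proof: both pass to the conjugate $bab^{-1}\in H$, observe that its fixed-point set is the single vertex $b\cdot\Fix(a)$, and conclude disjointness from the fact that a loxodromic element moves every point.
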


\begin{proof}
	Let $a$ be a vertex-elliptic element of $A_\Gamma$, and let $b$ be a loxodromic element of $A_\Gamma$. Then $bab^{-1}$ is vertex-elliptic with fixed-point set the vertex $b\cdot \Fix(a)$. Since $b$ is loxodromic, the vertices  $b\cdot \Fix(a) $ and $\Fix(a)$ are distinct. Thus, the elliptic elements $a$ and $bab^{-1}$ are two elements of $H$ with disjoint fixed-point sets.
\end{proof}

\begin{lem}\label{lem:distinct_tree_elliptic}
	Let $H$ be a subgroup of $A_\Gamma$. If $H$ contains two tree-elliptic elements with distinct corresponding trees $T, T'$ that intersect non-trivially, then $H$ contains a vertex-elliptic element with fixed-set the vertex $T\cap T'$. 
\end{lem}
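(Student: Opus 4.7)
The plan is to reduce the statement to a problem inside the dihedral parabolic stabilising $T\cap T'$, and then to exhibit a vertex-elliptic element of $\langle a, b\rangle$ by a direct computation when $m_{st}=2$ and by a ping-pong argument on the link of $v$ when $m_{st}\geq 3$.

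\textbf{Step 1 (reduction).} By Lemma~\ref{lem:standard_tree_intersect} the intersection $T\cap T'$ is a single vertex $v$ of type~$2$. Up to translating by an element of $A_\Gamma$ we may assume that $v=v_{st}$ and that $\Stab(v)=A_{st}$, so that $a,b\in A_{st}$; every element of $\langle a,b\rangle$ then lies in $A_{st}$ and fixes $v$. By Lemma-Definition~\ref{lem:dichotomy_elliptic}, it suffices to exhibit a non-trivial element $c\in\langle a,b\rangle$ that is not tree-elliptic, equivalently an element not contained in any $A_{st}$-conjugate of $\langle s\rangle$ or $\langle t\rangle$; such a $c$ is then automatically vertex-elliptic with $\Fix(c)=\{v\}$.

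\textbf{Step 2 (the case $m_{st}=2$).} Here $A_{st}\cong\Z^2$ is abelian, so the only cyclic subgroups arising as pointwise stabilisers of standard trees through $v$ are $\langle s\rangle$ and $\langle t\rangle$ themselves. Since $T\neq T'$, up to swapping $a$ and $b$ we have $a=s^p$ and $b=t^q$ with $p,q\neq 0$, and the product $ab=s^pt^q$ visibly lies in neither $\langle s\rangle$ nor $\langle t\rangle$, hence is vertex-elliptic.

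\textbf{Step 3 (the case $m_{st}\geq 3$).} Here I would exploit the action of $A_{st}$ on the link $\link(v)$, or equivalently on the augmented graph $\cK_{st}$ introduced in Section~\ref{sec:graph_dihedral}. By Lemma~\ref{lem:garside_orbit}, the fixed vertex-sets of $a$ and $b$ in $\link(v)$ consist respectively of the type-$1$ vertices corresponding to the edges of $T$ and $T'$ through $v$, which form two distinct $\langle\Delta_{st}\rangle$-orbits (since $T\neq T'$) and are therefore disjoint. Lifting to the quasi-tree $\cK_{st}$ and using the separating edges $W_v$ from Lemma~\ref{lem:separating_edge}, I would run a ping-pong argument analogous to that of Lemma~\ref{lem:big_angle_2c}: controlling how powers of $a$ and $b$ push points across these separating edges, one verifies that for all sufficiently large $m,n\geq 1$, the element $a^m b^n$ fixes no vertex of $\link(v)$, hence is vertex-elliptic.

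The main obstacle will be Step~3: converting the topological fact that $a$ and $b$ fix disjoint subsets of $\link(v)$ into the algebraic statement that some $a^m b^n$ fixes no vertex of $\link(v)$. This is where the fine combinatorial structure of $\cK_{st}$ from Section~\ref{sec:graph_dihedral} (in particular the tree-of-simplices structure behind Lemma~\ref{lem:separating_edge}) is needed, together with the ping-pong mechanism already developed in the proof of Lemma~\ref{lem:big_angle_2c}.
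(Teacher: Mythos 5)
Your Step~1 reduction and the $m_{st}=2$ case are correct, but Step~3 --- which you yourself flag as ``the main obstacle'' --- is not a proof: it is a plan to run a ping-pong argument that you have not carried out, and carrying it out is genuinely nontrivial. The elements $a$ and $b$ act on $\link(v)$ (or on $\cK_{s,t}$) as \emph{elliptic-type} isometries, each fixing a nonempty set of vertices, so what you are proposing is essentially a link-level version of Proposition~C for two elliptic elements with disjoint fixed sets; the machinery of Lemma~\ref{lem:big_angle_2c} produces metric lower bounds ($\ge\pi$ separation of translates of a single direction $\bar\gamma$), which is not the same as showing that a specific product $a^mb^n$ fixes \emph{no} vertex of the link, and the translation between the two would need to be done explicitly. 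Note also that your choice of exponents matters: with both powers positive, $a^mb^n$ has nonzero image under the length homomorphism $A_{st}\to\Z$, so one cannot rule out a priori, by any soft argument, that it is conjugate to a power of a generator; some actual dynamical or algebraic input is required. As it stands, the lemma is unproved for $m_{st}\ge 3$.

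For comparison, the paper's proof is a short uniform algebraic trick that avoids all of this. Writing $a=gx^ng^{-1}$ and $b=hy^mh^{-1}$ with $x,y\in\{s,t\}$ and $g,h\in\langle s,t\rangle$, one considers the element $a^mb^{-n}$, whose exponents are chosen so that its image under the homomorphism $\varphi\colon A_{st}\to\Z$ sending both generators to $1$ vanishes. If $a^mb^{-n}$ were tree-elliptic it would equal $kz^{\ell}k^{-1}$ with $z\in\{s,t\}$, and applying $\varphi$ forces $\ell=0$, i.e.\ $a^m=b^n$; Lemma~\ref{lem:standard_tree_power} then gives $\Fix(a)=\Fix(b)$, contradicting $T\neq T'$. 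You could rescue your proposal most economically by replacing Step~3 (and Step~2) with this exponent-balancing argument.
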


\begin{proof}
	Let $a$ and $b$ be two tree-elliptic elements such that the standard trees $\Fix(a) $ and $\Fix(b)$ are distinct and intersect non-trivially. By Lemma~\ref{lem:standard_tree_intersect}, the intersection $\Fix(a)\cap \Fix(b)$ is a single vertex $v$ of type $2$.  Up to translation by an element of $A_\Gamma$, we can assume that the stabiliser of $v$ is a standard parabolic subgroup $\langle s, t \rangle$, and $a, b$ are conjugates of powers of $s$ or $t$ of the form 
	$$a = gx^ng^{-1}, ~~~~~ b = hy^mh^{-1} ~~~~~~~~~ \mbox{ for some } ~~ g, h \in \langle s, t \rangle, ~~ m, n \in \Z, ~~ x, y \in \{s, t \}$$ 
	Let us show that the element $a^mb^{-n}\in H$ is vertex-elliptic with fixed-vertex the vertex $v$. We already have that $a^mb^{-n}$ fixes $v$. By contradiction, if $a^mb^{-n}$ was a tree-elliptic element in the stabiliser of $v$, we would have an equation of the form 
	$$ gx^{mn}g^{-1}hy^{-mn}h^{-1} = kz^\ell k^{-1} ~~~~~\mbox{ for some }  k \in \langle s, t \rangle, ~~ \ell \in \Z, ~~ z \in \{s, t \}$$
	Applying to this equation the homomorphism $\varphi: A_{st} \rightarrow \Z$ sending every standard generator to $1$ would yield $\ell =0$, hence $a^m = b^n$. By Lemma~\ref{lem:standard_tree_power}, this would imply that $$\Fix(a) = \Fix(a^m) =  \Fix(b^n) = \Fix(b),$$
	contradicting the fact that the trees $\Fix(a)$ and $\Fix(b)$ are distinct.
\end{proof}

\begin{proof}[Proof of Lemma~\ref{lem:mixed_dichotomy}]
	Let $H$ be a mixed subgroup of $A_\Gamma$. If $H$ contains  a vertex-elliptic element, then $H$ contains two elliptic elements with disjoint fixed-point sets by Lemma~\ref{lem:loxodromic_vertex_elliptic}. 
	
	If $H$ contains   two tree-elliptic elements with distinct corresponding trees, then either these trees are disjoint or they intersect. In the former case, $H$ contains two elliptic elements with disjoint fixed-point sets. In the latter case, $H$ contains a vertex-elliptic element by Lemma~\ref{lem:distinct_tree_elliptic}, hence two elliptic elements with disjoint fixed-point sets by Lemma~\ref{lem:loxodromic_vertex_elliptic}.

	Otherwise, all tree-elliptic elements of $H$ have the same standard tree $T$. Let $h$ be such a tree-elliptic element of $H$. An element $g \in H$ sends the tree $T = \Fix(h)$ to the tree $\Fix(ghg^{-1})$, which is also equal to $T$ since $ghg^{-1}\in H$. In particular,  $H$ stabilises~$T$. 
\end{proof}

We now study purely elliptic subgroups:

\begin{lem}\label{lem:elliptic_dichotomy}
	A purely elliptic subgroup of $A_\Gamma$ is contained in a parabolic subgroup of $A_\Gamma$.
\end{lem}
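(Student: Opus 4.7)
The plan is to use Proposition~C to force pairwise intersection of fixed-point sets of elements of $H$, and then combine the classification of Lemma-Definition~\ref{lem:dichotomy_elliptic} with the structural lemmas on standard trees to locate a common fixed simplex whose stabiliser is parabolic.

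First I would show that any two non-trivial elements $a, b \in H$ satisfy $\Fix(a) \cap \Fix(b) \neq \emptyset$. Indeed, if their fixed-point sets were disjoint, Proposition~C would yield an integer $n \geq 1$ such that $\langle a^n, b^n\rangle$ is a non-abelian free subgroup of $H$, and by Remark~\ref{rem:existence_loxodromic} the product $a^n b^n$ would act loxodromically on $D_\Gamma$, contradicting the hypothesis that $H$ is purely elliptic.

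Next I would split into cases using Lemma-Definition~\ref{lem:dichotomy_elliptic}. Suppose first that some non-trivial $a \in H$ is vertex-elliptic, with fixed vertex $v$ of type $2$. Then for every non-trivial $b \in H$, intersection of fixed-point sets forces $v \in \Fix(b)$, so $H \subseteq \St(v)$; by Lemma~\ref{lem:type_vertices}, $\St(v)$ is a dihedral parabolic subgroup of $A_\Gamma$. Otherwise every non-trivial element of $H$ is tree-elliptic. If all such elements share a single fixed standard tree $T$, then $H$ is contained in the pointwise stabiliser of $T$, which is an infinite cyclic parabolic subgroup by Lemma~\ref{lem:standard_tree_stab}. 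Finally, if two tree-elliptic elements $a, b \in H$ have distinct fixed standard trees $T_a \neq T_b$, the first step forces $T_a \cap T_b \neq \emptyset$, so by Lemma~\ref{lem:standard_tree_intersect} they meet at a single type $2$ vertex, and Lemma~\ref{lem:distinct_tree_elliptic} produces a vertex-elliptic element of $H$, reducing us to the first subcase.

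The main substance of the argument is already contained in Proposition~C; beyond it, the proof is a short dichotomy argument using the classification of elliptic elements and the intersection lemmas for standard trees, so no single step should pose a serious obstacle. The only mild subtlety is to remember that the trivial subgroup is itself the standard parabolic $A_{\emptyset}$, so the degenerate case $H=\{1\}$ requires no separate treatment.
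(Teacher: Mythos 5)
Your proposal is correct and follows essentially the same route as the paper: use Proposition~C (together with the remark that $a^nb^n$ is loxodromic) to force pairwise intersection of fixed-point sets, then run the trichotomy vertex-elliptic / distinct fixed trees (reducing to the vertex-elliptic case via Lemma~\ref{lem:distinct_tree_elliptic}) / common fixed tree, concluding with the stabiliser lemmas. No gaps; the identification of the pointwise stabiliser of a standard tree with a cyclic parabolic via Lemma~\ref{lem:standard_tree_stab} matches the paper's "stabiliser of any edge of that tree".
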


\begin{proof}
	Given two non-trivial elements $a, b \in H$, their fixed-point sets must intersect, for otherwise Remark~\ref{rem:existence_loxodromic} guarantees the existence of loxodromic elements in $H$. 
	
	Suppose that $H$ contains a vertex-elliptic element, with fixed vertex denoted $v$. Since $H$ does not contain loxodromic elements, it follows from Proposition~C that every element of $H$ fixes $v$, hence $H$ is contained in a parabolic subgroup.
	
	Suppose now that $H$ contains two  tree-elliptic elements with distinct fixed-trees (which intersect by the above), then $H$ contains a vertex-elliptic element by Lemma~\ref{lem:distinct_tree_elliptic}, and from the above paragraph $H$ is contained in a parabolic subgroup.
	
	The last case to consider is that all non-trivial elements of $H$ are tree-elliptic with the same fixed tree. In that case, $H$ is contained in the stabiliser of any edge of  that tree, and hence $H$ is contained in a parabolic subgroup. 
\end{proof}

\begin{proof}[Proof of Proposition~\ref{prop:subgroup_dichotomy}] A subgroup of $A_\Gamma$ is either purely loxodromic, mixed, or purely elliptic. The result is thus a consequence of Lemmas~\ref{lem:mixed_dichotomy} and~\ref{lem:elliptic_dichotomy}.
\end{proof}

\section{Subgroups generated by two powers} \label{sec:Wise}

In this section, we prove Theorem~\ref{thm:power_subgroup}. For the remainder of this section, $A_\Gamma$ will denote a two-dimensional Artin group of hyperbolic type. 

\subsection{The coned-off Deligne complex} 

The following modification of the Deligne complex was introduced in \cite{MP2} in order to make two-dimensional Artin groups of hyperbolic type act acylindrically on some hyperbolic complex.

\begin{defin}[{\cite[Definition~4.8]{MP2}}] \label{def:cone_off}
	The \textbf{coned-off Deligne complex} $\widehat{D}_\Gamma$ is the simplicial complex obtained from $D_\Gamma$ by coning-off every standard tree of $D_\Gamma$. That is, $\widehat{D}_\Gamma$ is the flag simplicial complex whose $1$-skeleton is obtained from the $1$-skeleton of $D_\Gamma$ by adding a new vertex $v_T$ for every standard tree $T$ of $D_\Gamma$, and by adding an edge between $v_T$ and each vertex of $T$. 
\end{defin}

The main features of this complex are summarised below:

\begin{thm}[{\cite[Proposition~4.8]{MP2}}]  \label{thm:coneoff_CAT}
	Let $A_\Gamma$ be a two-dimensional Artin group of hyperbolic type. Then the coned-off Deligne complex $\widehat{D}_\Gamma$ admits an $A_\Gamma$-equivariant piecewise-hyperbolic metric such that $\widehat{D}_\Gamma$ is a CAT$(-1)$ space.
\end{thm}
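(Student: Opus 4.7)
The plan is to start from the piecewise-hyperbolic CAT$(-1)$ metric on $D_\Gamma$ (the hyperbolic-type variant of Theorem~\ref{thm:Charney_Davis}) and equip each new simplex of $\widehat{D}_\Gamma$ with a hyperbolic shape in an $A_\Gamma$-equivariant way. The CAT$(-1)$ property will then follow from Gromov's link condition for piecewise-hyperbolic complexes with finitely many shapes (see \cite[Theorem~II.5.4]{BH}): since $\widehat{D}_\Gamma$ is obtained from a simply connected complex by coning off connected subcomplexes (the standard trees), it is simply connected, and hence globally CAT$(-1)$ as soon as each vertex link, equipped with its angular metric, is CAT$(1)$.

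The first step is to specify the model simplices. Since $A_\Gamma$ acts transitively on the set of standard trees corresponding to a given $A_\Gamma$-conjugacy class of standard generators, it is enough to fix, for each such conjugacy class, one hyperbolic model for the cone triangles: an isoceles hyperbolic triangle whose base has the length of an edge of the corresponding standard tree in $D_\Gamma$ and whose apex angle $\alpha$ at the cone vertex $v_T$ is a real parameter to be chosen small enough. Because $\Gamma$ is finite, this yields an $A_\Gamma$-equivariant piecewise-hyperbolic structure on $\widehat{D}_\Gamma$ with finitely many shapes.

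It then remains to verify the link condition. For a cone vertex $v_T$, the link is combinatorially isomorphic to $T$ (a link-vertex for each vertex of $T$, a link-edge for each edge of $T$), hence $1$-dimensional without cycles and trivially CAT$(1)$. The link of a type $0$ vertex of $D_\Gamma$ is unchanged. For a type $1$ vertex $v = h\langle s \rangle$, Lemmas~\ref{lem:standard_tree_power} and~\ref{lem:standard_tree_intersect} imply that $v$ is contained in a unique standard tree $T$, so only one new vertex $v_T$ is added to $\mathrm{Lk}(v)$; any new essential cycle uses two cone arcs together with a path in the original CAT$(1)$ link, and Lemma~\ref{lem:link_dihedral_cocompact} provides a lower bound on the resulting length that exceeds $2\pi$ once $\alpha$ is small enough.

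The main obstacle is the link of a type $2$ vertex $v = h\langle s, t \rangle$, which inherits one new vertex $v_T$ for each standard tree $T$ through $v$; by Lemma~\ref{lem:garside_orbit}, these correspond to the $\langle \Delta_{st}\rangle$-orbits of type $1$ vertices of $\mathrm{Lk}(v)$. A new short essential cycle either uses two cone arcs ending at distinct points of the same orbit (and hence contains a $\mathrm{Lk}(v)$-path realising some non-trivial power of $\Delta_{st}$) or traverses two distinct cone vertices (and hence a $\mathrm{Lk}(v)$-path between distinct orbits); in both cases its length is bounded below by a combinatorial quantity on $\mathrm{Lk}(v)/\langle \Delta_{st}\rangle$. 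The hyperbolic-type assumption enters here: hyperbolicity of $W_\Gamma$ forces this quantity to be sufficiently large through constraints on the exponents $m_{st}$ and on the combinatorics of $\Gamma$. Since only finitely many $A_\Gamma$-orbits of type $2$ vertices exist, a uniform positive apex angle $\alpha$ can be chosen so that every such cycle has length at least $2\pi$, completing the verification of Gromov's link condition.
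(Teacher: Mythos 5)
First, note that the paper does not prove this statement: it is quoted verbatim from \cite[Proposition~4.8]{MP2}, so the only possible comparison is with the argument given there. Your overall architecture --- extend the piecewise-hyperbolic metric of Theorem~\ref{thm:Charney_Davis} over the cone triangles and verify Gromov's link condition vertex type by vertex type, with the apex links being trees, each type $1$ vertex lying in a unique standard tree, and the type $2$ links controlled via the $\langle \Delta_{st}\rangle$-orbits of Lemma~\ref{lem:garside_orbit} --- is indeed the architecture of the proof in \cite{MP2}, and you correctly identify the type $2$ links as the crux.

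However, there is a genuine quantitative gap, and it sits exactly where you locate the use of the hyperbolic-type hypothesis. If you keep on $D_\Gamma$ the metric with its ``Euclidean'' angles ($\pi/2$ at type $1$ vertices, $\pi/(2m_{st})$ at type $2$ vertices) and glue on isoceles hyperbolic cone triangles, the link condition fails for \emph{every} choice of apex angle $\alpha>0$: the base angles $\beta$ of a hyperbolic isoceles triangle are always strictly less than $\pi/2$, and shrinking $\alpha$ only pushes $\beta$ up towards the limit $\arccos(\tanh(c/2))<\pi/2$ determined by the base length $c$. At a type $1$ vertex $\langle s\rangle$ with two neighbours $t,u$ of $s$ in $\Gamma$, the embedded cycle through $\bar v_T$, $\overline{\langle s,t\rangle}$, $\overline{\{1\}}$, $\overline{\langle s,u\rangle}$ then has length $2\beta+\pi<2\pi$. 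At a type $2$ vertex the situation is the same: two consecutive points of a $\langle\Delta_{st}\rangle$-orbit, say $\overline{\langle s\rangle}$ and $\overline{\langle s\rangle\Delta_{st}}$, are at angular distance \emph{exactly} $\pi$ in $\mathrm{Lk}(v_{st})$ (half of a relation cycle of girth $2\pi$; this value is independent of $m_{st}$ and of any hyperbolicity assumption, contrary to what you suggest), so again one gets an embedded cycle of length $2\beta+\pi<2\pi$. Since each of the two base angles of a cone triangle would have to be at least $\pi/2$, no hyperbolic (or even Euclidean) shape can work over the unperturbed metric. The missing idea is that one must first \emph{perturb the metric on $D_\Gamma$ itself}, increasing the angles at type $1$ and type $2$ vertices strictly above $\pi/2$ and $\pi/(2m_{st})$ and decreasing the angle at type $0$ vertices to compensate; this creates uniform slack (girth $2\pi+\varepsilon$) in the type $1$ and type $2$ links, after which base angles slightly below $\pi/2$ suffice and a small positive apex angle remains available. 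The room to shrink the type $0$ angles is exactly where hyperbolicity of $W_\Gamma$ enters: by Moussong's criterion, the embedded cycles of the type $0$ link (essentially the labelled graph $\Gamma$) have angular length bounded below by $2\pi+\delta$ for a uniform $\delta>0$. So the hypothesis is consumed at the type $0$ links, not at the type $2$ links as you claim, and without the preliminary perturbation your construction does not produce a CAT$(-1)$ space.
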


\begin{thm}[{\cite[Theorem~A]{MP2}}]  \label{thm:coneoff_acylindrical}
	Let $A_\Gamma$ be a two-dimensional Artin group of hyperbolic type. Then the action of $A_\Gamma$ on the coned-off Deligne complex $\widehat{D}_\Gamma$ is acylindrical. 
\end{thm}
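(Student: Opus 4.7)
The plan is to establish acylindricity directly from the CAT$(-1)$ structure on $\widehat{D}_\Gamma$ provided by Theorem~\ref{thm:coneoff_CAT}, combined with a case analysis of pointwise stabilizers of long geodesic segments. Given $\epsilon > 0$, the goal is to produce constants $R, N > 0$ such that whenever $d(x,y) \geq R$, the set $\{g \in A_\Gamma : d(x, gx), d(y, gy) \leq \epsilon\}$ has cardinality at most $N$. The overall strategy is to first reduce the \emph{coarse} displacement condition to an \emph{actual} pointwise-fixing condition on a long sub-geodesic, and then to bound such pointwise stabilizers uniformly.

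The first step is the CAT$(-1)$ reduction. Because $\widehat{D}_\Gamma$ is piecewise-hyperbolic with finitely many isometry types of simplices, convexity of the distance function plus the exponential thinness of CAT$(-1)$ bigons imply that if $g$ displaces both $x$ and $y$ by at most $\epsilon$, then $g$ displaces the midpoint of $[x, y]$ by an amount $\delta = \delta(\epsilon, R)$ that tends to $0$ as $R \to \infty$. Since the action of $A_\Gamma$ on the Deligne complex is without inversion (as recalled after Definition~\ref{def:Deligne}), and this property lifts to $\widehat{D}_\Gamma$, for $R$ large enough $g$ must \emph{pointwise fix} a central sub-segment $\sigma \subset [x,y]$ of some definite moderate length $L = L(\epsilon)$, up to a bounded approximation error controlled by the simplex sizes.

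The second step is to bound $|\mathrm{Stab}(\sigma)|$ uniformly, where $L$ is taken sufficiently large. A geodesic in $\widehat{D}_\Gamma$ alternates between sub-arcs that lie entirely in $D_\Gamma$ and sub-arcs of the form $[p, v_T] \cup [v_T, q]$ passing through cone points $v_T$. By Lemma~\ref{lem:type_vertices}, any sub-arc that crosses the interior of a triangle of $D_\Gamma$ has trivial pointwise stabilizer, while a sub-arc along an edge has cyclic stabilizer. At a cone point $v_T$, we have $\mathrm{Stab}(v_T) = C(a) \cong \Z \times F_k$ by Lemma~\ref{lem:standard_tree_stab}, but only the subgroup $\langle a \rangle$ fixes any single edge from $v_T$ to a vertex of $T$. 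Thus any $\sigma$ long enough to exit a bounded neighborhood of a single cone point must either enter a triangle of $D_\Gamma$ (killing the stabilizer), run through edges of two distinct standard trees (killing the stabilizer by Lemma~\ref{lem:standard_tree_intersect}, since the pointwise stabilizers of two distinct standard trees have trivial intersection), or visit two distinct cone points $v_T \neq v_{T'}$ (reducing $G_\sigma$ to $\langle a \rangle \cap \mathrm{Stab}(v_{T'})$, again trivial by Lemma~\ref{lem:standard_tree_intersect}). Hence for $L$ larger than a universal constant depending only on $\Gamma$, we get $\mathrm{Stab}(\sigma) = \{1\}$.

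The main obstacle is precisely the presence of the infinite cone-point stabilizers $\mathrm{Stab}(v_T) \cong \Z \times F_k$, which rules out any simple "finite cell stabilizers" argument. Overcoming this requires quantifying how a non-trivial element of $C(a) \smallsetminus \langle a \rangle$ permutes the edges of $\widehat{D}_\Gamma$ emanating from $v_T$ into $T$: using Lemma~\ref{lem:garside_orbit} and the fact that $C(a)/\langle a \rangle \cong F_k$ acts freely on the set of edges of $T$, such an element moves every edge of $T$ to a distinct edge, so it cannot pointwise fix a sub-segment that penetrates into two distinct incident triangles around $v_T$. Combining this with the CAT$(-1)$ reduction and the local finiteness of $\widehat{D}_\Gamma$ (which bounds the number of segments $g\cdot\sigma$ at bounded Hausdorff distance from $\sigma$) produces the desired constant $N$.
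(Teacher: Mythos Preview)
First, note that the paper does not prove this theorem at all: it is quoted verbatim as \cite[Theorem~A]{MP2} and used as a black box throughout Section~\ref{sec:Wise}. So there is no ``paper's own proof'' to compare against; any argument you supply is necessarily going beyond what the present paper does.

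Your sketch has a genuine gap. The complex $\widehat{D}_\Gamma$ is \emph{not} locally finite: type~$2$ vertices of $D_\Gamma$ already have infinite links (Lemma~\ref{lem:link_dihedral_cocompact}), and each cone point $v_T$ has link isomorphic to the infinite tree $T$. Your final sentence explicitly invokes ``the local finiteness of $\widehat{D}_\Gamma$,'' which is false. More importantly, this non-local-finiteness breaks your Step~1 reduction as written. You claim that once the CAT$(-1)$ midpoint estimate forces $d(p,gp)$ to be small, the without-inversion property yields that $g$ fixes $p$. But take $g\in C(a)\smallsetminus\langle a\rangle$ fixing the cone point $v_T$, and let $p$ lie on an edge $[v_T,u]$ at distance $\eta$ from $v_T$ with $gu\neq u$; then $d(p,gp)\to 0$ as $\eta\to 0$ while $g$ does \emph{not} fix $p$. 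So ``small displacement $\Rightarrow$ pointwise fixing'' fails exactly at the infinite-valence vertices, and you cannot conclude that $g$ fixes a definite-length sub-segment $\sigma$ without further argument.

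Your Step~2 case analysis is also incomplete. You assert that a long $\sigma$ must either enter a triangle of $D_\Gamma$, meet two distinct standard trees, or meet two distinct cone points, but you do not justify this trichotomy. The missing ingredient is that each cone $\mathrm{Cone}(T)$ has uniformly bounded diameter in $\widehat{D}_\Gamma$ (at most twice the cone radius), so a sufficiently long geodesic must exit any single cone; one then needs to track where it goes upon exiting and rule out the possibility that it stays in the $1$-skeleton along edges all sharing the same cyclic stabiliser. This is doable, but it requires a careful enumeration of edge types (type~$0$--$1$, $0$--$2$, $1$--$2$, and cone edges) and their stabilisers via Lemma~\ref{lem:type_vertices} and Lemma~\ref{lem:standard_tree_stab}, which you have not carried out. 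The actual proof in \cite{MP2} handles both the failure of local finiteness and the coarse-stabiliser count with considerably more care than your outline suggests.
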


\subsection{The proof of Wise's Power Alternative}

As before, we will split the proof into several cases, depending on whether $a$ and $b$ act elliptically or loxodromically on $D_\Gamma$. We start with an elementary observation: 

\begin{lem}\label{lem:Z_times_Fk}
	A subgroup of the form $\Z\times F_k$, where $F_k$ is a finitely generated free group, satisfies Wise's Power Alternative.
\end{lem}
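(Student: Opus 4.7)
The plan is to prove the stronger dichotomy: any two elements $a, b \in \Z \times F_k$ either already commute or already generate a non-abelian free group on $\{a,b\}$, so that $n=1$ always suffices. Writing $a = (p, x)$ and $b = (q, y)$ with $p, q \in \Z$ and $x, y \in F_k$, the centrality of the $\Z$ factor gives $ab = ba$ if and only if $xy = yx$; so if $x$ and $y$ commute in $F_k$, then $a$ and $b$ commute.

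In the opposite case, I would invoke the standard fact that any two non-commuting elements of a free group generate a rank-$2$ free subgroup: by Nielsen--Schreier the subgroup $\langle x, y\rangle \leq F_k$ is free, and being $2$-generated its rank is at most $2$; since $x$ and $y$ do not commute it cannot have rank $\leq 1$, so $\langle x, y \rangle \cong F_2$ freely on the generators $x, y$. Now for any nontrivial reduced word $w$ in two formal letters, the projection $\Z \times F_k \to F_k$ sends $w(a, b)$ to $w(x, y)$, which is nontrivial by freeness of $\langle x, y\rangle$; hence $w(a, b) \neq 1$ in $\Z \times F_k$. This shows that $\langle a, b\rangle$ is free of rank $2$ on $a, b$.

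There is no real obstacle here; the only non-trivial input is the classical free-group dichotomy cited above, and everything else is a direct computation exploiting the centrality of the $\Z$ factor.
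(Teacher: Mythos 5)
Your proof is correct, and it establishes the same strengthened dichotomy that the paper relies on (two elements either commute or generate a rank-$2$ free group, so $n=1$ works), but by a different route: the paper's proof is a one-line citation of Baudisch's theorem that right-angled Artin groups satisfy this dichotomy, noting that $\Z\times F_k$ is such a group, whereas you give a direct, self-contained argument exploiting the centrality of the $\Z$ factor together with the classical fact that two non-commuting elements of a free group freely generate an $F_2$. The one step you pass over quickly is the deduction that a $2$-generated free subgroup of rank exactly $2$ is free \emph{on the given generators} $x,y$; this is standard (e.g.\ because finitely generated free groups are Hopfian, the surjection $F_2\to\langle x,y\rangle$ sending the formal generators to $x,y$ is an isomorphism), but it is worth saying, since your projection argument needs $w(x,y)\neq 1$ for every nontrivial reduced word $w$, not merely that $\langle x,y\rangle$ is abstractly free of rank $2$. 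Your approach buys independence from the external reference and is arguably more transparent for this special case; the paper's approach buys brevity and situates the lemma within the general right-angled Artin group framework it already invokes elsewhere.
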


\begin{proof}
	This is a direct consequence of the fact that right-angled Artin group satisfy Wise's Power Alternative  \cite{Baudisch}. 
\end{proof}

\begin{lem}\label{lem:parabolic_dichotomy}
	Parabolic subgroups of $A_\Gamma$ satisfy Wise's Power Alternative. 
\end{lem}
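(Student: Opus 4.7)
The plan is to reduce Wise's Power Alternative in a parabolic subgroup of $A_\Gamma$ to the elementary spherical-type cases, where we can invoke Lemma~\ref{lem:Z_times_Fk} together with a finite-index argument. Since Wise's Power Alternative is invariant under conjugation, it suffices to treat standard parabolic subgroups. In the setting in which this lemma will be applied---via Proposition~\ref{prop:subgroup_dichotomy} and the proof of Lemma~\ref{lem:elliptic_dichotomy}---the relevant parabolic subgroups arise as stabilisers of vertices and edges of $D_\Gamma$, and by Lemma~\ref{lem:type_vertices} these are trivial, infinite cyclic, or dihedral Artin groups $A_{st}$.

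The trivial and infinite cyclic cases are immediate since any two elements commute, and the same is true when $m_{st}=2$, as then $A_{st}\cong\Z^2$. The substantive case is $A_{st}$ with $m_{st}\geq 3$: by Lemma~\ref{lem:virtually splits}, such a group contains a finite-index subgroup $H\cong\Z\times F_n$, and $H$ satisfies Wise's Power Alternative by Lemma~\ref{lem:Z_times_Fk}.

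I would then conclude by showing that Wise's Power Alternative is inherited from a finite-index subgroup. Setting $N\coloneqq[A_{st}:H]$, for any $a,b\in A_{st}$ one has $a^{N!},b^{N!}\in H$ (a standard consequence of the action of $A_{st}$ on the finite coset space $A_{st}/H$). Applying Wise's Power Alternative within $H$ to the pair $(a^{N!},b^{N!})$ produces an integer $m\geq 1$ such that $a^{N!m}$ and $b^{N!m}$ either commute or generate a non-abelian free subgroup, so $n\coloneqq N!m$ suffices.

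This argument presents no real obstacle: its content is essentially the combination of the structural Lemma~\ref{lem:virtually splits} for dihedral Artin groups with Baudisch's theorem for right-angled Artin groups, together with the elementary observation that Wise's Power Alternative survives a finite-index inclusion.
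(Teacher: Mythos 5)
Your proof is correct and follows essentially the same route as the paper: reduce to the spherical-type parabolics arising as vertex stabilisers (trivial, infinite cyclic, $\Z^2$, or dihedral with $m_{st}\geq 3$), pass to the finite-index subgroup $\Z\times F_n$ of Lemma~\ref{lem:virtually splits} by taking suitable powers, and conclude via Lemma~\ref{lem:Z_times_Fk}. Your explicit $N!$ bookkeeping for landing in the finite-index subgroup is just a slightly more detailed version of the paper's ``choose $n$ so that $a^n,b^n$ lie in $\Z\times F_k$.''
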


\begin{proof}
	We know that vertex stabilisers are either trivial (for type 0 vertices), infinite cyclic  (for type 1 vertices), or virtually isomorphic to a product of the form $\Z\times F_k$ (for type 2 vertices, by Lemma~\ref{lem:virtually splits}). The result is clear for stabilisers of type 0 or type 1 vertices, so it is enough to consider the stabiliser  of a type 2 vertex . Since such a stabiliser contains a finite index subgroup of the form $\Z \times F_k$ for some $k \geq 1$, we can choose $n \geq 1$ so that $a^n, b^n$ belong to such a product $\Z \times F_k$. The result now follows from Lemma~\ref{lem:Z_times_Fk}.
\end{proof}

\paragraph{Elliptic-elliptic.} We start with the case of two elliptic elements: 

\begin{lem}\label{lem:power_elliptic}
	Let $a, b \in A_\Gamma$ be two elliptic elements. Then there exists an integer $n \geq 1$ such that $a^n$ and $b^n$ either commute or generate a non-abelian free group.
\end{lem}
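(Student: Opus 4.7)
The plan is to split into two cases according to whether the fixed-point sets $\mathrm{Fix}(a)$ and $\mathrm{Fix}(b)$ in $D_\Gamma$ are disjoint or not. Since the action of $A_\Gamma$ on $D_\Gamma$ is without inversion and the fixed-point sets of elliptic elements are convex subcomplexes (by Lemma-Definition~\ref{lem:dichotomy_elliptic} they are either single type~$2$ vertices or standard trees), these two cases are indeed exhaustive, and in the intersecting case their intersection must contain a vertex.

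In the first case, when $\mathrm{Fix}(a)\cap \mathrm{Fix}(b)=\emptyset$, the conclusion is immediate from Proposition~C: there exists $n\geq 1$ such that $\langle a^n,b^n\rangle$ is a non-abelian free group.

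In the second case, when $\mathrm{Fix}(a)\cap \mathrm{Fix}(b)\neq\emptyset$, I would observe that the intersection must contain a vertex $v$ of $D_\Gamma$ (since the intersection is a non-empty convex subcomplex and $A_\Gamma$ acts without inversion). Then both $a$ and $b$ lie in the stabiliser of $v$, which is a parabolic subgroup of $A_\Gamma$. Applying Lemma~\ref{lem:parabolic_dichotomy}, which says that parabolic subgroups satisfy Wise's Power Alternative, yields the required integer $n\geq 1$ such that $a^n$ and $b^n$ either commute or generate a non-abelian free group.

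No real obstacle is expected here: the lemma is essentially a dichotomy that reduces, via the geometry of the Deligne complex, to the two previously established building blocks (Proposition~C on disjoint elliptic fixed-point sets, and Lemma~\ref{lem:parabolic_dichotomy} on parabolic subgroups). The only mildly delicate point is justifying that a non-empty intersection of the two fixed-point sets forces $a$ and $b$ into a common parabolic subgroup; this uses the explicit description of vertex stabilisers from Lemma~\ref{lem:type_vertices} together with the classification of elliptic elements in Lemma-Definition~\ref{lem:dichotomy_elliptic}.
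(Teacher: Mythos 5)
Your proposal is correct and follows essentially the same route as the paper: disjoint fixed-point sets are handled by Proposition~C, and a non-empty intersection yields a common fixed vertex (using that the action is without inversion), placing $a$ and $b$ in a parabolic subgroup to which Lemma~\ref{lem:parabolic_dichotomy} applies.
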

\begin{proof}
	If the fixed-point sets $\Fix(a)$ and $\Fix(b)$ are disjoint, then the result follows from Proposition~C. Otherwise, $a$ and $b$ fix a point of the Deligne complex $D_\Gamma$. Since the action is without inversion, $a$ and $b$ fix a vertex, hence they are contained in some parabolic subgroup of $A_\Gamma$. The result now follows from Lemma~\ref{lem:parabolic_dichotomy}.
	\end{proof}

\paragraph{Loxodromic-loxdromic.} We now consider the case of two loxodromic elements. 

\begin{lem}\label{lem:power_loxodromic}
	Let $a, b \in A_\Gamma$ be two loxodromic elements. Then there exists an integer $n \geq 1$ such that $a^n$ and $b^n$ either commute or generate a non-abelian free group.
\end{lem}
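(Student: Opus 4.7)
The plan is to analyze the actions of $a$ and $b$ on the coned-off Deligne complex $\widehat D_\Gamma$ (Definition~\ref{def:cone_off}), which is CAT$(-1)$ (Theorem~\ref{thm:coneoff_CAT}) and on which $A_\Gamma$ acts acylindrically (Theorem~\ref{thm:coneoff_acylindrical}). Since $\widehat D_\Gamma$ is a piecewise-hyperbolic complex with finitely many isometry types of simplices, each of $a, b$ acts as an elliptic or loxodromic isometry of $\widehat D_\Gamma$. The crucial preliminary observation is that if $g\in A_\Gamma$ is loxodromic on $D_\Gamma$ but elliptic on $\widehat D_\Gamma$, then $g$ cannot fix any vertex of $D_\Gamma$ (that would force $g$ to be elliptic on $D_\Gamma$), so $g$ must fix a cone vertex $v_T$; equivalently, $g$ globally stabilises the standard tree $T$, and by Lemma~\ref{lem:standard_tree_stab} belongs to $\Stab(T)\cong \Z\times F_k$.

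With this in hand I would split into three cases. \emph{Case (i): both $a$ and $b$ act loxodromically on $\widehat D_\Gamma$.} Here I invoke the standard theory of groups acting acylindrically on hyperbolic spaces: either the limit sets of $a$ and $b$ in $\partial \widehat D_\Gamma$ coincide, in which case the elementary closure of $\langle a,b\rangle$ is virtually cyclic and, using that two-dimensional Artin groups are torsion-free, some powers $a^n, b^n$ land in a common infinite cyclic subgroup and thus commute; or the limit sets are disjoint, and a classical North-South ping-pong on $\widehat D_\Gamma$ yields an $n$ with $\langle a^n, b^n\rangle$ free of rank~$2$. \emph{Case (ii): both $a$ and $b$ are elliptic on $\widehat D_\Gamma$ and stabilise the same standard tree $T$.} Then $\langle a, b\rangle\subseteq \Stab(T)\cong \Z\times F_k$, and Lemma~\ref{lem:Z_times_Fk} directly gives the conclusion.

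\emph{Case (iii):} either one of $a, b$ is elliptic on $\widehat D_\Gamma$ and the other is loxodromic on $\widehat D_\Gamma$, or both are elliptic on $\widehat D_\Gamma$ but stabilise distinct standard trees $T_a\neq T_b$. In the loxodromic-elliptic sub-case, conjugating the elliptic element by a high power of the loxodromic one produces two elliptic isometries of $\widehat D_\Gamma$ with disjoint bounded fixed-point sets, and we reduce to the same framework as the all-elliptic sub-case. In both situations, one adapts the proof of Proposition~C to the CAT$(-1)$ complex $\widehat D_\Gamma$: one picks a geodesic segment realising the distance between the two fixed-point sets (an analogue of Lemma~\ref{lem:existence_min_geodesic} in the piecewise-hyperbolic setting), and shows that for $n$ large the $\langle a^n\rangle$- and $\langle b^n\rangle$-translates of this segment meet at angles at least $\pi$ at their endpoints. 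CAT$(-1)$ convexity then assembles these translates into an embedded subtree of $\widehat D_\Gamma$ on which $\langle a^n, b^n\rangle$ acts freely, whence $\langle a^n, b^n\rangle$ is a non-abelian free group.

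The main obstacle is the angle estimate in case~(iii): unlike in Proposition~C, the links of cone vertices of $\widehat D_\Gamma$ are not directly governed by the dihedral-Artin analysis of Section~\ref{sec:graph_dihedral}, so the fine combinatorial arguments of Section~\ref{sec:elliptic} do not transport verbatim. Acylindricity of the action plays the replacement role: only boundedly many powers of an elliptic element can fix any edge at a cone vertex, so sufficiently high powers of the elliptic elements act as arbitrarily large rotations in the relevant links, and the required angle bound follows. A separate easy check is that in the commuting sub-cases of (i) and (ii) the cyclic subgroup obtained from high powers is indeed abelian, which is immediate from torsion-freeness of $A_\Gamma$.
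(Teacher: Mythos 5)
Your overall strategy (a case analysis according to how $a$ and $b$ act on the coned-off complex $\widehat{D}_\Gamma$) differs from the paper's, which compares the axes of $a$ and $b$ directly in the CAT$(-1)$ space $D_\Gamma$: by Lemma~\ref{lem:axis_dichotomy} the axes either coincide or have disjoint limit sets in $\partial D_\Gamma$; disjoint limit sets give a free group on high powers by North--South dynamics, while a common axis either lies in a standard tree (so both elements lie in a $\Z\times F_k$ and Lemma~\ref{lem:Z_times_Fk} applies) or does not, in which case acylindricity on $\widehat{D}_\Gamma$ forces the stabiliser of the common endpoint pair to be virtually cyclic and high powers commute. Your Cases (i) and (ii) recover pieces of this, but Case (iii) has two genuine gaps.

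First, in the loxodromic--elliptic sub-case your reduction proves the wrong statement: conjugating the elliptic element $a$ by a power $b^m$ and running the two-elliptics machinery would show that $\langle a^n, b^m a^n b^{-m}\rangle$ is free, which says nothing about $\langle a^n, b^n\rangle$; Wise's Power Alternative requires the free group (or commutation) to be generated by $a^n$ and $b^n$ themselves. The correct mechanisms are either ping-pong of the elliptic element against translates of the loxodromic axis (the role played by Lemmas~\ref{lem:free_subgroup_mixed} and~\ref{lem:axis_overlap} in the genuinely mixed case), or the paper's observation that in this situation the two $D_\Gamma$-axes already have disjoint limit sets, so ordinary North--South ping-pong in $D_\Gamma$ applies. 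Second, your angle estimate at cone vertices is asserted rather than proved, and the proposed justification is not the right one: acylindricity bounds the number of group elements coarsely stabilising a pair of far-apart points, but it gives no control over angular displacement in the link of a fixed vertex (in Proposition~C the analogous estimate required the full Garside/quasi-tree analysis of Section~\ref{sec:graph_dihedral}, not acylindricity). What would actually drive such an estimate at a cone vertex $v_T$ is that $a$ acts on the link of $v_T$, which is governed by the tree $T$, as a translation of positive length, so that the displacement of $\bar\gamma$ under $a^{kn}$ grows linearly in $n$; but this requires working with the explicit piecewise-hyperbolic metric of \cite{MP2} and is not supplied. As written, Case (iii) does not close.
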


\begin{lem}\label{lem:loxodromic_conedoff}
An element $g \in A_\Gamma$ acts loxodromically on $\widehat{D}_\Gamma$ if and only if it does not stabilise a vertex of $D_\Gamma$ or a standard tree of $D_\Gamma$.
\end{lem}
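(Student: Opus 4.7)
The plan is to use the fact that isometries of $\widehat{D}_\Gamma$ are semi-simple in order to reduce the loxodromic/elliptic dichotomy to a statement about vertex fixation, and then to translate the latter into statements about $D_\Gamma$ and its standard trees via the explicit description of vertices of $\widehat{D}_\Gamma$.

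First, I would observe that by construction $\widehat{D}_\Gamma$ is built from finitely many isometry types of simplices and, by Theorem~\ref{thm:coneoff_CAT}, is CAT$(-1)$. Combining Bridson's theorem on semi-simple isometries with the CAT$(-1)$ hypothesis, every element of $A_\Gamma$ acts on $\widehat{D}_\Gamma$ either as an elliptic or a loxodromic isometry. Thus $g$ is loxodromic on $\widehat{D}_\Gamma$ if and only if $g$ is not elliptic, that is, if and only if $g$ does not fix a point of $\widehat{D}_\Gamma$.

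Next, I would argue that the $A_\Gamma$-action on $\widehat{D}_\Gamma$ is without inversion, so ``not fixing a point'' is equivalent to ``not fixing any vertex of $\widehat{D}_\Gamma$''. The action on $D_\Gamma$ is without inversion by the standard argument recalled after Definition~\ref{def:Deligne}. The new simplices of $\widehat{D}_\Gamma$ are of the form $\{v_T\}\cup\sigma$ for $\sigma$ a simplex of $D_\Gamma$ contained in the standard tree $T$; since $v_T$ is a cone vertex whose valence in $\widehat{D}_\Gamma$ is very different from that of an original vertex of $D_\Gamma$, any element stabilising such a simplex must fix $v_T$, and then stabilise $\sigma\subset D_\Gamma$, which by the no-inversion property on $D_\Gamma$ means it fixes $\sigma$ pointwise. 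Hence $g$ is elliptic on $\widehat{D}_\Gamma$ if and only if $g$ fixes a vertex of $\widehat{D}_\Gamma$.

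Finally, vertices of $\widehat{D}_\Gamma$ come in exactly two types: original vertices of $D_\Gamma$, and cone vertices $v_T$ associated to standard trees $T$. By construction, $g$ fixes $v_T$ if and only if $g$ globally stabilises the standard tree $T$ (using Lemma~\ref{lem:standard_tree_intersect} to see that preserving the edges incident to $v_T$ is equivalent to preserving $T$). Combining the three steps, $g$ is loxodromic on $\widehat{D}_\Gamma$ if and only if $g$ neither fixes a vertex of $D_\Gamma$ nor globally stabilises a standard tree, which is exactly the statement of the lemma. The only step that requires any real care is the no-inversion verification for the new simplices introduced by the coning-off construction, but this is routine given the different nature of cone vertices and original vertices.
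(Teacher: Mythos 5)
Your proposal is correct and follows essentially the same route as the paper: Bridson's semisimplicity theorem to get the elliptic/loxodromic dichotomy, the CAT$(-1)$ property and absence of inversions to reduce ellipticity to fixing a vertex of $\widehat{D}_\Gamma$, and the classification of those vertices as either original vertices of $D_\Gamma$ or cone apices $v_T$, the latter being fixed exactly when $T$ is stabilised. The extra care you take with the no-inversion property for the new cone simplices is a detail the paper elides, but it does not change the argument.
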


\begin{proof}
	Since $D_\Gamma$ has finitely many isometry types of simplices by construction, an element of $A_\Gamma$ acts either elliptically or loxodromically by \cite[Theorem~B]{BridsonSemisimple}. If an element $g \in A_\Gamma$ acts elliptically on $\widehat{D}_\Gamma$, it must fix a point since $\widehat{D}_\Gamma$ is CAT$(-1)$ by Theorem~\ref{thm:coneoff_CAT}. Since the action is without inversion, it fixes a vertex. This vertex is either a vertex of $D_\Gamma$ or is an apex of a cone over a standard tree $T$. In the former case, $g$ fixes a vertex of $D_\Gamma$. In the latter case, $g$ stabilises the standard tree~$T$.
\end{proof} 

We will need the following result from  CAT$(-1)$ geometry, which is a variant of \cite[Lemma~3.10]{KapovichBook}: 

\begin{lem}\label{lem:CAT_fellowtravel}
	For every triple of constants $r, \ell, \varepsilon >0$, there exists a constant $L>0$ such that the following holds: Let $Y$ be a CAT$(-1)$ space, and let  $\gamma, \gamma'$ be two geodesics of $Y$ that stay $r$-close on a distance at least $L$. Then there are two sub-segments $\gamma_1 \subset \gamma$ and $\gamma_1' \subset \gamma'$ of length at least $\ell$ and which are $\varepsilon$-close. 
\end{lem}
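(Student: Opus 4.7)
The plan is to reduce the statement to an explicit computation in the hyperbolic plane $\mathbb{H}^2$ via the CAT$(-1)$ comparison theorem. First I would extract, from the hypothesis that $\gamma$ and $\gamma'$ stay $r$-close over a distance at least $L$, arc-length parametrizations $c,c' : [0,L] \to Y$ of sub-segments $[p,q] \subset \gamma$ and $[p',q'] \subset \gamma'$ with $d(p,p') \le r$ and $d(q,q') \le r$. The task then reduces to showing that, for $L$ large enough (depending on $r,\ell,\varepsilon$), one can find a sub-interval $I \subset [0,L]$ of length at least $\ell$ on which $d(c(t), c'(t)) \le \varepsilon$.

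Next I would analyse the geodesic quadrilateral $pqq'p'$ of $Y$ by splitting it along the diagonal $[p,q']$ into two geodesic triangles, and applying the CAT$(-1)$ comparison to each. Assembling comparison triangles in $\mathbb{H}^2$ along the common edge yields a (possibly non-convex) comparison quadrilateral $\bar p\,\bar q\,\bar q'\,\bar p'$ with the same side lengths as the original. For each $t \in [0,L]$ there are corresponding comparison points $\bar c(t), \bar c'(t)$ on the two "long" sides, and the CAT$(-1)$ inequality applied in each of the two triangles gives the bound
\[
d(c(t), c'(t)) \;\le\; d(\bar c(t), \bar c'(t)).
\]

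The remaining step is an explicit estimate in $\mathbb{H}^2$. By a standard monotonicity argument, the worst case is that of a Saccheri quadrilateral with both "short" sides of length exactly $r$ and both "long" sides of length $L$; lengthening a short side or shortening a long side only increases the distance between corresponding interior points. For a Saccheri quadrilateral, classical hyperbolic trigonometry shows that the distance between corresponding interior points is controlled by a function of the form
\[
d(\bar c(t), \bar c'(t)) \;\le\; C(r)\,\frac{\cosh\!\bigl(\tfrac{L}{2}-t\bigr)}{\cosh\!\bigl(\tfrac{L}{2}\bigr)},
\]
which decays exponentially as a function of $\min(t, L - t)$. Choosing $L$ sufficiently large forces this bound to be at most $\varepsilon$ on any prescribed sub-interval of length $\ell$ centred at $L/2$, and setting $\gamma_1, \gamma'_1$ to be the images of this interval under $c, c'$ concludes the proof.

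The main obstacle is the trigonometric estimate in $\mathbb{H}^2$, together with the reduction of an arbitrary hyperbolic quadrilateral with two short sides of length $\le r$ to the Saccheri case; this reduction is elementary but requires care to ensure the inequalities go in the right direction. Everything else is standard CAT$(-1)$ machinery, and the result is indeed a mild variant of \cite[Lemma~3.10]{KapovichBook}, so one could equally well deduce it by quoting that reference and adjusting the parametrization.
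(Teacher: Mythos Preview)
The paper does not supply a proof of this lemma; it merely states it as ``a variant of \cite[Lemma~3.10]{KapovichBook}'' and moves on. So there is no proof to compare against, and your proposal is already more than the paper provides.

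Your approach is the standard one and is essentially correct. Two small points of care are worth flagging. First, the two sub-segments $[p,q]$ and $[p',q']$ need not have exactly the same length (only $|d(p',q')-L|\le 2r$), so writing both parametrizations on $[0,L]$ is a slight abuse; this is harmless but should be acknowledged. Second, the inequality $d(c(t),c'(t))\le d(\bar c(t),\bar c'(t))$ for the glued comparison quadrilateral does not follow from applying CAT$(-1)$ to each triangle separately, since $c(t)$ and $c'(t)$ lie in different triangles. One routes through a point on the diagonal $[p,q']$ and uses the triangle inequality together with CAT$(-1)$ in each piece (or invokes Reshetnyak's gluing / the CAT$(-1)$ four-point condition); either way one obtains a bound by a hyperbolic quantity with the same exponential decay. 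With those adjustments the argument goes through, and as you note, one can equally well quote Kapovich directly.
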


Recall that the Deligne complex $D_\Gamma$ is CAT$(-1)$ by Theorem~\ref{thm:Charney_Davis}, so in particular every loxodromic element admits a unique axis. We will need the following result, which generalises a well-known result about groups acting geometrically on a proper hyperbolic space: 

\begin{lem}\label{lem:axis_dichotomy}
	Let $b, b' \in A_\Gamma$ be two elements acting loxodromically on $D_\Gamma$,  with axes $\Lambda, \Lambda' $ respectively. Then either $\Lambda = \Lambda'$, or the limit sets of $\Lambda$ and $\Lambda'$ are disjoint in~$\partial D_\Gamma$.  (In the latter case, $\Lambda \cap B(\Lambda', r)$ is  bounded for every $r \geq 0$.)
\end{lem}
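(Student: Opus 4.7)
The plan is a proof by contradiction. Suppose $\Lambda \neq \Lambda'$ but they share an endpoint $\xi \in \partial D_\Gamma$. Since $D_\Gamma$ is CAT$(-1)$ by Theorem~\ref{thm:Charney_Davis}, there are no flat strips, so distinct geodesic lines cannot share both endpoints; hence $\Lambda$ and $\Lambda'$ share exactly one. Moreover, geodesic rays converging to the same boundary point of a CAT$(-1)$ space converge exponentially, so $\Lambda$ and $\Lambda'$ contain arbitrarily long sub-rays within any prescribed distance $r > 0$ of each other. I would then split the argument according to Lemma~\ref{lem:loxodromic_conedoff}, depending on whether $b, b'$ stabilize standard trees of $D_\Gamma$.

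If neither $b$ nor $b'$ stabilizes a standard tree, both act loxodromically on the CAT$(-1)$ coned-off complex $\widehat{D}_\Gamma$, on which $A_\Gamma$ acts acylindrically (Theorems~\ref{thm:coneoff_CAT} and~\ref{thm:coneoff_acylindrical}). The shared endpoint $\xi$ pushes forward to a common limit point in $\partial \widehat{D}_\Gamma$: for $x \in \Lambda$, the sequence $b^n x \to \xi$ remains unbounded in $\widehat{D}_\Gamma$ (as $b$ is loxodromic there) and therefore converges to the unique forward limit point of $b$; likewise for $b'$. The standard theory of subgroups of groups acting acylindrically on hyperbolic spaces then yields $p, q \geq 1$ such that $b^p$ and $(b')^q$ commute, and two commuting loxodromic isometries of the CAT$(0)$ space $D_\Gamma$ share an axis, contradicting $\Lambda \neq \Lambda'$.

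In the remaining case, some standard tree $T$ is stabilized by one of them, say $b'$. Then $\Lambda' \subset T$ and $\xi \in \partial T$, so by exponential convergence the forward sub-ray of $\Lambda$ lies in $B(T, r)$ for every $r > 0$; applying $b$ shifts this to a sub-ray of $\Lambda$ lying in $B(bT, r)$. If $b$ did not also stabilize $T$, then $T \neq bT$ would intersect in at most one vertex by Lemma~\ref{lem:standard_tree_intersect}, so their $r$-neighborhoods would have bounded intersection in the CAT$(-1)$ space --- contradicting the existence of an infinite common sub-ray. Hence $b$ stabilizes $T$ as well, and $b, b' \in C(a) \cong \Z \times F_n$ by Lemma~\ref{lem:standard_tree_stab}, with axes in $T$. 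Writing $b = (a^k, w)$ and $b' = (a^{k'}, w')$ in this product decomposition, the $F_n$-axes of $w, w'$ share an endpoint in $\partial T$; a standard argument for free-group actions on trees then gives $p, q \geq 1$ with $w^p = (w')^q$, so $b^p (b')^{-q} \in \langle a \rangle$ and $b^p$ commutes with $(b')^q$, once again forcing $\Lambda = \Lambda'$.

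Finally, the parenthetical claim that $\Lambda \cap B(\Lambda', r)$ is bounded when limit sets are disjoint is standard CAT$(-1)$ geometry: two bi-infinite geodesics with four distinct endpoints diverge exponentially in both directions, so their $r$-neighborhoods have bounded intersection. The main obstacles I anticipate are (i) carefully identifying $\xi \in \partial D_\Gamma$ with a limit point of $b$ in $\partial \widehat{D}_\Gamma$ via the 1-Lipschitz collapse map and invoking the precise acylindrical classification, and (ii) justifying the free-group-on-tree sub-argument for the action of $F_n = C(a)/\langle a \rangle$ on $T$, which requires some understanding of the edge stabilizers of this action (cyclic, generated by $a$, per Lemma~\ref{lem:standard_tree_stab}).
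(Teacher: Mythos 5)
Your overall architecture matches the paper's: split according to whether the loxodromic elements stabilise a standard tree, use the acylindrical action on $\widehat{D}_\Gamma$ in the non-tree case, and use the rigidity of standard trees (edge stabilisers equal to the pointwise stabiliser $\langle a\rangle$) in the tree case. In the non-tree case your route differs in flavour: the paper pulls the WPD property back to $D_\Gamma$ via the Lipschitz inclusion and runs an explicit pigeonhole argument on the translates $b^k\Lambda'$ entirely inside $D_\Gamma$, whereas you push the shared boundary point forward to $\partial\widehat{D}_\Gamma$ and invoke the standard classification of loxodromics with intersecting limit sets under an acylindrical action. Your version is fine but note that it additionally uses torsion-freeness (or an argument that commuting loxodromics of the CAT$(-1)$ space $D_\Gamma$ share an axis -- this needs CAT$(-1)$, not just CAT$(0)$ as you wrote, since commuting loxodromics in a CAT$(0)$ space such as $\mathbb{E}^2$ need not share an axis); the transfer of asymptoticity from $\partial D_\Gamma$ to $\partial\widehat{D}_\Gamma$, which you rightly flag, does go through because both axes fellow-travel the respective $\langle b\rangle$- and $\langle b'\rangle$-orbits.

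The one step that does not stand as written is in your tree case: from ``$T\neq bT$ intersect in at most one vertex'' you conclude that $B(T,r)\cap B(bT,r)$ is bounded ``in the CAT$(-1)$ space''. This is not a consequence of CAT$(-1)$ geometry plus almost-disjointness alone: two disjoint convex subsets of $\mathbb{H}^2$ (e.g.\ two geodesics sharing an ideal endpoint) have unbounded intersecting $r$-neighbourhoods. What you must rule out is that the distinct standard trees $T$ and $bT$ are asymptotic, and for that you need the combinatorial input the paper uses in its Case~1: since $D_\Gamma$ has finitely many isometry types of simplices, there is an $\varepsilon>0$ such that any edge of $D_\Gamma$ contained in $N_\varepsilon(T)$ is an edge of $T$; combined with Lemma~\ref{lem:CAT_fellowtravel} this shows that asymptotic standard trees would share unboundedly many edges, contradicting Lemma~\ref{lem:standard_tree_intersect}. (The same mechanism is what lets the paper upgrade ``$\Lambda'$ unboundedly close to $\Lambda\subset T$'' to ``$\Lambda'\cap\Lambda$ contains edges'', after which it concludes directly from $b^n e=(b')^m e$ and the edge-stabiliser lemma, without needing your detour through the product decomposition $\Z\times F_n$ and the free action on $T$ -- though that detour is also valid once you note that the quotient of $C(a)$ by the pointwise stabiliser $\langle a\rangle$ acts on $T$ with trivial edge stabilisers, and that $\Lambda\subset T$ once $b$ preserves $T$, which itself uses the absence of flat strips.) With that ingredient supplied, your argument closes.
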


\begin{proof} We consider two cases, depending on whether $b$ stabilises a standard tree of~$D_\Gamma$. 
	
	\medskip

	\textbf{Case 1:} Let us first assume that $b$ stabilises a standard tree $T$, so in particular $\Lambda$ is contained in $T$.  Suppose that $\Lambda$ and $\Lambda'$ share a point at infinity. Since $D_\Gamma$ has finitely many isometry types of cells, there exists a constant $\varepsilon$ such that the only edges of $D_\Gamma$ contained in the $\varepsilon$-neighbourhood $N_\varepsilon(\Lambda)$ of $\Lambda$ are precisely the edges of $\Lambda$. Since $\Lambda'$ and $\Lambda$ share a point at infinity, it follows from Lemma~\ref{lem:CAT_fellowtravel} that $\Lambda' \cap N_\varepsilon(\Lambda)$ is unbounded, hence $\Lambda' \cap \Lambda$ is unbounded by construction of $\varepsilon$.  
	Since $b, b'$ acts by translation on $\Lambda, \Lambda'$ respectively, there exists at least one edge $e$ of $ \Lambda \cap \Lambda' \subset T$ and integers $n, m \in \Z$ such that $b^n\cdot e = (b')^m \cdot e$. By Lemma~\ref{lem:standard_tree_stab}, the stabiliser of an edge of $T$ is the pointwise stabiliser of the whole tree $T$. We can thus write $b^n = (b')^mh$, where $h$ is an element fixing $T$ pointwise. We thus have 
	$$\Lambda = \mbox{Axis}(b^n) = \mbox{Axis}((b')^mh) = \mbox{Axis}((b')^m) = \Lambda'$$
	
	\textbf{Case 2:} Let us now consider the case where $b$ does not stabilise any standard tree of $D_\Gamma$.  Since $b$ does not stabilise a point of $D_\Gamma$ or a standard tree of $D_\Gamma$, it acts loxodromically on $\widehat{D}_\Gamma$ by Lemma~\ref{lem:loxodromic_conedoff}. Since in addition the action of $A_\Gamma$ on $\widehat{D}_\Gamma$ is  acylindrical by Theorem~\ref{thm:coneoff_acylindrical}, it follows in particular that $b$ is a WPD element for this action. Since the inclusion map $D_\Gamma \hookrightarrow \widehat{D}_\Gamma$ is Lipschitz, it is straightforward to check that $b$ is also a WPD element for the action of $A_\Gamma$ on $D_\Gamma$. 
	
	Suppose that $\Lambda$ and $\Lambda'$ share a point at infinity. Then, the axes $b^k\Lambda'$, $k \geq 0$, also share a common point at infinity. Since $b$ acts on $D_\Gamma$ as a WPD element, let  $L, N\geq 0$ be integers  such that for a sub-segment $\gamma$ of $\Lambda$ of length at least $L$, there are at most $N$ elements  $g\in A_\Gamma$ such that $\gamma$ and $g\gamma$ are at Hausdorff distance at most $1+|b|$.  Since $b^0\Lambda' , \ldots, b^N\Lambda'$ share the same unique point at infinity, we can choose a subsegment $\gamma \subset \Lambda$ of length $L$ that is in the $1$-neighbourhood of each $b^0\Lambda' , \ldots, b^N\Lambda'$ by Lemma~\ref{lem:CAT_fellowtravel}. In particular, we can  pick integers $\ell_0, \ldots, \ell_N$ such that each segment $b^0(b')^{\ell_0}\gamma , \ldots, b^N(b')^{\ell_N}\gamma$ is within Hausdorff distance $1+|b|$ of $\gamma$. (see Figure~\ref{fig:wpd})
	
	\begin{figure}
		\begin{center}
			\begin{overpic}[width=0.9\textwidth]{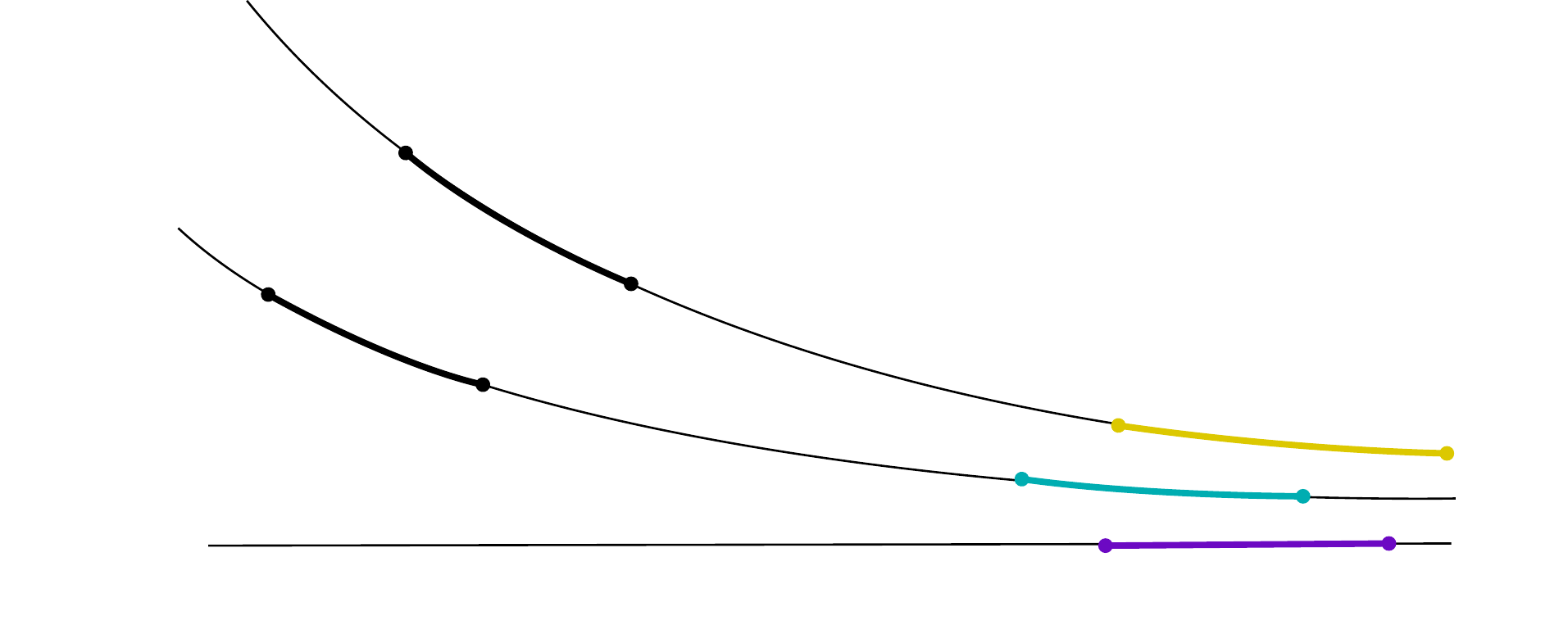}
				\put(95,5){$\gamma$}
				\put(95,8){$b(b')^{\ell_1}\gamma$}
				\put(95,12){$b^2(b')^{\ell_2}\gamma$}
				\put(8,5){$\Lambda'$}
				\put(5,25){$b\Lambda'$}
				\put(9,40){$b^2\Lambda'$}
				\put(30, 30){$b^2\gamma$}
				\put(21,21){$b\gamma$}
			\end{overpic}
			\caption{A portion of the axes $\Lambda'$, $b\Lambda',$ and $ b^2\Lambda'$ together with the geodesic segments $\gamma$ (in purple), $b(b')^{\ell_1}\gamma$  (in green) and~$b^2(b')^{\ell_2}\gamma$ (in yellow).}\label{fig:wpd}
		\end{center}
	\end{figure}

By construction of $L$ and $N$, this means in particular that there exists two integers $j \neq j'$ such that $b^j(b')^{\ell_j} = b^{j'}(b')^{\ell_{j'}}$. Thus, we have an equality of the form $b^p = (b')^q$ for some non-zero integers $p, q$. It now follows that~$\Lambda = \Lambda'$.  
\end{proof}

\begin{proof}[Proof of Lemma~\ref{lem:power_loxodromic}]
	Let $\Lambda, \Lambda'$ denote the axes of $a, b$ respectively. By Lemma~\ref{lem:axis_dichotomy}, we either have $\Lambda = \Lambda'$ or their $\partial \Lambda \cap \partial \Lambda ' = \varnothing$. In the latter case, it is a standard consequence of the North-South dynamics of hyperbolic geometry that sufficiently large powers of $a$ and $b$ generate a free subgroup. 
	
	Let us now assume $\Lambda = \Lambda'$. First consider the case where these axes are contained in a standard tree of the form $\Fix(c)$ for $c$ some conjugate of a standard generator, then both $a$ and $b$ send edges of $\Lambda \subset \Fix(c)$ to edges of $\Lambda \subset \Fix(c)$. It thus follows from Lemma~\ref{lem:standard_tree_intersect} that $a$ and $b$ stabilise $\mathrm{Fix}(c)$, hence belong to $C(c)$ by Lemma~\ref{lem:standard_tree_stab}. Since this subgroup is isomorphic to a direct product of the form $\Z \times F_k$ by Lemma~\ref{lem:standard_tree_stab}, the result now follows from Lemma~\ref{lem:Z_times_Fk}.
	
	Let us now consider the case where the axes $\Lambda = \Lambda'$ are not contained in any standard tree. It follows from Theorem~\ref{thm:coneoff_acylindrical} that $a$ and $b$ act loxodromically on the coned-off space $\widehat{D}_\Gamma$. Moreover, the image of $\Lambda = \Lambda'$ in $\widehat{D}_\Gamma$ defines a quasi-line stable under $\langle b \rangle$ and $\langle b' \rangle$. In particular, the axes of $b$ and $b'$ have the same limit set in $\partial \widehat{D}_\Gamma$, which we still denote $\partial \Lambda = \partial \Lambda'$ for simplicity.  Since $A_\Gamma$ acts acylindrically on the hyperbolic space $\widehat{D}_\Gamma$ by Theorem~\ref{thm:coneoff_acylindrical}, it follows that the stabiliser of $\partial \Lambda$ is virtually cyclic. Thus, there exists an integer $n\geq 1$ such that $a^n$ and $b^n$ commute.
\end{proof}

\paragraph{Loxdromic-elliptic.} We finally consider the mixed case of a loxdromic element and an elliptic element. 

\begin{lem}\label{lem:elliptic_loxodromic}
	Let $a \in A_\Gamma$ be an  element acting elliptically on $D_\Gamma$, and let $b \in A_\Gamma$ be an element acting loxodromically on $D_\Gamma$. Then there exists an integer $n \geq 1$ such that $a^n$ and $b^n$ either commute or generate a non-abelian free group.
\end{lem}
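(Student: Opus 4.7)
The plan is to split the proof into cases based on whether $b$ acts loxodromically on the coned-off Deligne complex $\widehat{D}_\Gamma$. Note first that $a$ always acts elliptically on $\widehat{D}_\Gamma$: if $a$ is vertex-elliptic it fixes the corresponding type-$2$ vertex of $D_\Gamma$, and if $a$ is tree-elliptic with standard tree $T_a$ it fixes the apex of the cone over $T_a$ in $\widehat{D}_\Gamma$.

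If $b$ acts loxodromically on $\widehat{D}_\Gamma$, I will exploit the acylindricity of the action (Theorem~\ref{thm:coneoff_acylindrical}): then $b$ is a WPD element with virtually cyclic elementary closure $E(b)$. If $a \in E(b)$, then $\langle a,b\rangle \subseteq E(b)$ is virtually cyclic and sufficiently divisible powers $a^n, b^n$ land in a common infinite cyclic subgroup, hence commute. If $a \notin E(b)$, a standard ping-pong argument for acylindrical actions (following Dahmani--Guirardel--Osin, combined with passing to a common power to symmetrise the exponents) produces an integer $n \geq 1$ such that $\langle a^n, b^n\rangle$ is a non-abelian free group.

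If instead $b$ does not act loxodromically on $\widehat{D}_\Gamma$, then since $b$ cannot fix a vertex of $D_\Gamma$, Lemma~\ref{lem:loxodromic_conedoff} forces $b$ to stabilise some standard tree $T = \Fix(c)$, and Lemma~\ref{lem:standard_tree_stab} gives $b \in \Stab(T) = C(c) \cong \Z \times F_k$. If moreover $a \in \Stab(T)$, then both $a$ and $b$ lie in this $\Z \times F_k$ and Lemma~\ref{lem:Z_times_Fk} applies directly.

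The remaining sub-case, in which $a \notin \Stab(T)$, is where the main obstacle lies. Here the axes $\Lambda_b \subseteq T$ and $a\Lambda_b \subseteq aT$ are distinct (using $T \neq aT$ together with Lemma~\ref{lem:standard_tree_intersect}), so Lemma~\ref{lem:axis_dichotomy} forces their limit sets in $\partial D_\Gamma$ to be disjoint. My plan here is to adapt the geometric strategy of Proposition~C to this mixed elliptic--loxodromic setting: let $\gamma$ be the geodesic segment realising the distance between $\Fix(a)$ and $\Lambda_b$ (via Lemma~\ref{lem:existence_min_geodesic}), with endpoints $x \in \Fix(a)$ and $y \in \Lambda_b$. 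Lemma~\ref{lem:big_angle} supplies an angle of at least $\pi$ between distinct $\langle a^n\rangle$-translates of $\gamma$ at $x$, while Lemma~\ref{lem:angle_pi_over_2} gives an angle of at least $\pi/2$ between $\gamma$ and $\Lambda_b$ at $y$. For $n$ sufficiently large, concatenating $\langle a^n, b^n\rangle$-translates of $\gamma$ with segments of the translates of $\Lambda_b$ inside the CAT$(-1)$ space $D_\Gamma$, and using hyperbolic convexity to upgrade the $\pi/2$-angle junctions into controlled quasi-geodesic behaviour, shows that $a^n b^n$ acts loxodromically on $D_\Gamma$ with axis whose limit set in $\partial D_\Gamma$ is disjoint from $\partial \Lambda_b$. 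A final two-loxodromic ping-pong between $a^n b^n$ and $b^n$ then yields $\langle a^n, b^n\rangle = \langle a^n b^n, b^n\rangle \cong F_2$. The core technical difficulty is precisely this last step: the junctions at the $y$-translates are only orthogonal rather than antipodal, so one must balance the $\pi/2$-angular data against the large translation length of $b^n$ along $\Lambda_b$ to produce an honest loxodromic Schottky configuration.
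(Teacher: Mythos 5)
Your overall architecture is sound and cases where $b$ is loxodromic on $\widehat{D}_\Gamma$, or where both $a$ and $b$ stabilise the same standard tree, can be pushed through (note that in a torsion-free group with an acylindrical action, $E(b)\cong \Z$ consists of loxodromics, so your sub-case $a\in E(b)$ is in fact vacuous). The genuine gap is exactly the sub-case you flag: $b$ stabilises a standard tree $T$ and $a\notin \mathrm{Stab}(T)$. You do not prove it, and the route you sketch has concrete obstructions. First, the geodesic $\gamma$ from $\Fix(a)$ to $\Lambda_b$ may be degenerate ($\Fix(a)$ can meet $\Lambda_b$, e.g.\ when $\Fix(a)$ is a vertex on $T$ or a standard tree crossing $T$), in which case there is no concatenation to analyse. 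Second, even when $|\gamma|>0$, your broken path has corners of angle only $\geq\pi/2$ at both endpoints of each $\Lambda_b$-segment; in a CAT$(-1)$ space such a path is a quasi-geodesic only if the segments between consecutive bad corners are long compared to a universal constant, and the $\gamma$-pieces have fixed, possibly very small, length — so no amount of enlarging $n$ (which only lengthens the $\Lambda_b$-pieces) repairs this. Third, the concluding ``two-loxodromic ping-pong'' between $a^nb^n$ and $b^n$ only yields freeness of $\langle (a^nb^n)^m, b^{nm}\rangle$ for large $m$, which gives a free subgroup of $\langle a^n,b^n\rangle$ but not that $\langle a^N,b^N\rangle$ itself is free for some $N$, which is what the statement requires.

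The paper avoids all of this by never building a quasi-axis for $a^nb^n$. Its top-level split is on whether $\Lambda\subset\Fix(a)$ (in which case $b\in C(a)$ by Lemma~\ref{lem:standard_tree_stab} and the two elements commute). Otherwise it first shows $a^k\Lambda\neq\Lambda$ for all $k\neq 0$ (a short CAT$(0)$ projection argument: $a^k\Lambda=\Lambda$ would force $a^{2k}$ to fix $\Lambda$ pointwise, contradicting $\Lambda\not\subset\Fix(a)$), then invokes Lemma~\ref{lem:axis_overlap} to get a uniform bound on $\mathrm{diam}(a^k\Lambda\cap B(\Lambda,r))$ for all $k\neq 0$, and finally applies the elliptic--loxodromic ping-pong Lemma~\ref{lem:free_subgroup_mixed} directly in the hyperbolic space $D_\Gamma$, producing a free group $\langle a, b^n\rangle$ (hence $\langle a^n,b^n\rangle$ free). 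The hard sub-case you isolate is handled there by Lemma~\ref{lem:axis_overlap_2}: a long $r$-fellow-travelling between $g\Lambda$ and $\Lambda$ forces $g\Lambda$ and $\Lambda$ to share an edge of $T$, hence $g\in\mathrm{Stab}(T)$, and the rigidity of edge stabilisers of standard trees then forces $g\Lambda=\Lambda$. This bounded-overlap statement is the missing ingredient your proposal would need, and it requires no angle estimates at all. If you want to salvage your outline, replace the angle construction in the last sub-case by an argument of this type.
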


This case is more involved and will require some preliminary results. The key result from hyperbolic geometry we will use is the following: 

\begin{lem}\label{lem:free_subgroup_mixed}
	Let $X$ be a $\delta$-hyperbolic space, let $a$ be a non-trivial elliptic isometry of $X$, and let $b$ be a loxodromic isometry of $X$ with axis $\Lambda$. Suppose that for every $r \geq 0$, there exists a constant $C(r)$ such that for every non-zero integer $k$, the diameter of $a^k\Lambda \cap B(\Lambda, r)$ is at most $C(r)$. Then there exists an integer $n\geq 1$ such that $a$ and $b^n$ generate a free group.  
\end{lem}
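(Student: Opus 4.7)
The plan is to apply the standard ping-pong lemma to the actions of $\langle a \rangle$ and $\langle b^n \rangle$ on the Gromov bordification $X \cup \partial X$. Denote by $b^+, b^- \in \partial X$ the two endpoints of the axis $\Lambda$. The key preliminary claim I would establish is that the orbit $\{a^k b^+, a^k b^- : k \neq 0\}$ does not accumulate on $\{b^+, b^-\}$. Suppose toward a contradiction that there is a sequence of nonzero distinct integers $k_i$ with, say, $a^{k_i} b^+ \to b^+$. Then the geodesics $a^{k_i}\Lambda$ have one endpoint converging to an endpoint of $\Lambda$, and by the standard fellow-travelling property of geodesics in a $\delta$-hyperbolic space, there is a uniform $r$ (depending only on $\delta$) such that $a^{k_i}\Lambda \cap B(\Lambda, r)$ contains a subsegment of length going to infinity with $i$. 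This contradicts the uniform bound $\mathrm{diam}(a^{k_i}\Lambda \cap B(\Lambda, r)) \leq C(r)$. The other three accumulation scenarios ($b^+ \to b^-$, $b^- \to b^\pm$) are handled identically.

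Using this non-accumulation, I would then choose disjoint shadow-type neighbourhoods $V^+, V^-$ of $b^+, b^-$ in $X \cup \partial X$ whose union $V := V^+ \cup V^-$ is small enough that $a^k V \cap V = \emptyset$ for every $k \neq 0$. North-South dynamics of the loxodromic isometry $b$ provides an integer $n \geq 1$ such that, for every $m \neq 0$, the map $b^{mn}$ sends $(X \cup \partial X) \setminus V^{\mp}$ into $V^{\pm}$ (with sign depending on $\mathrm{sgn}(m)$). Setting $X_b := V$ and $X_a := \bigcup_{k \neq 0} a^k V$, one has $X_a \cap X_b = \emptyset$ by construction, $a^k(X_b) \subset X_a$ tautologically for $k \neq 0$, and $b^{mn}(X_a) \subset V^+ \cup V^- = X_b$ for $m \neq 0$ since $X_a$ is disjoint from $V$ and in particular from $V^\mp$. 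The ping-pong lemma then yields $\langle a, b^n \rangle \cong \langle a \rangle \ast \langle b^n \rangle$, which is free of rank $2$ since $A_\Gamma$ is torsion-free and so $a$ has infinite order.

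The main obstacle in executing this plan is that the hyperbolic spaces of interest need not be proper (the coned-off Deligne complex $\widehat{D}_\Gamma$, to which the lemma will be applied, has infinite vertex stabilisers), so one cannot appeal to compactness of $X \cup \partial X$ and must run the North-South dynamics argument directly in terms of Gromov products and shadows based at a point of $\Lambda$. The uniformity of the constant $C(r)$ in the hypothesis (independent of $k$) is precisely what is needed to rule out accumulation, and it ultimately controls how small the sets $V^\pm$ must be chosen.
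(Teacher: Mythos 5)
Your proposal is correct and follows essentially the same route as the paper's proof: a ping-pong argument on the bordification $X\cup\partial X$, using North--South dynamics of $b$ to choose $n$ and the bounded-overlap hypothesis on the translates $a^k\Lambda$ to ensure that the nonzero powers of $a$ move the chosen neighbourhoods of $\partial\Lambda$ off themselves. The only cosmetic difference is the choice of ping-pong table: the paper takes the complement of $U^+\cup U^-$ as the second set, whereas you take $\bigcup_{k\neq 0}a^kV$.
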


\begin{proof}
This is proved by playing ping-pong. Since $b$ acts loxodromically with limit set $\partial \Lambda = \{b^+, b^-\} $ consisting of two distinct points in $\partial X$, we can pick two disjoint neighbourhoods of $b^+, b^-$   in the borderification $X \cup \partial X$, which we call  $U^+, U^-$ respectively. Let $V \coloneqq (X\cup \partial X) - (U^+\cup U^-)$, which we can assume non-empty. 

By the North-South dynamics of loxodromic isometries in hyperbolic spaces, we choose an integer $n\geq 1$ such that for every positive integer $k$ $$b^{kn}\cdot (V\cup  U^+) \subset U^+$$ and $$b^{-kn}\cdot  (V\cup  U^-)  \subset U^-$$ Moreover, the hypothesis  on the $\langle a \rangle$-translates of $\Lambda$ implies that we can further assume that for every non-zero integer $k$, we have $$a^k(U^+ \cup U^-) \subset V$$ 

Let $w$ be a non-trivial reduced word in $a$ and $b^n$. To prove the lemma, it is enough to show that $w$ defines a non-trivial element of $A_\Gamma$. Up to conjugating by a power of $b^n$, we can assume that $w$ starts with a positive power of $b$. If $w$ ends with a non-trivial power of $a$, then the above inclusions imply by ping-pong that $w\cdot U^-  \subset U^+$. If $w$ ends with a non-trivial power of $b$, then the above inclusions imply by ping-pong that $w\cdot V  \subset U^+$. In any case, the word $w$ defines a group element that acts non-trivially on $X$, hence is non-trivial.
\end{proof}

In order to prove Lemma~\ref{lem:elliptic_loxodromic}, we thus need to show that certain translates of the axis of a loxodromic element have ``small coarse overlaps''. We will start by proving the following slightly more general statement:

\begin{lem}\label{lem:axis_overlap}
	Let $\Lambda$ be the axis of a loxodromic element $b\in A_\Gamma$. For every $r>0$, there exists a constant $C\coloneqq C(r, b)$ such that for every $g\in A_\Gamma$, we either have $g\Lambda = \Lambda$ or the diameter of $g\Lambda \cap B(\Lambda, r)$ is at most $C$.
\end{lem}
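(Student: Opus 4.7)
The plan is to split the proof into two cases, depending on whether $b$ acts loxodromically on the coned-off Deligne complex $\widehat{D}_\Gamma$.

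\paragraph{Case 1.}
If $b$ acts loxodromically on $\widehat{D}_\Gamma$, which by Lemma~\ref{lem:loxodromic_conedoff} is equivalent to $b$ not stabilising any standard tree, then the acylindricity of the action on $\widehat{D}_\Gamma$ (Theorem~\ref{thm:coneoff_acylindrical}) makes $b$ a WPD element there, and this property transfers to $D_\Gamma$ via the Lipschitz inclusion $D_\Gamma \hookrightarrow \widehat{D}_\Gamma$. The argument then essentially reprises Case~2 in the proof of Lemma~\ref{lem:axis_dichotomy}: if $\mathrm{diam}(g\Lambda \cap B(\Lambda, r))$ is large, convexity of $r$-neighbourhoods in the CAT$(-1)$ space $D_\Gamma$ together with Lemma~\ref{lem:CAT_fellowtravel} produces long sub-segments of $\Lambda$ and $g\Lambda$ at arbitrarily small Hausdorff distance $\varepsilon$. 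Parameterising these via the $b$-action on $\Lambda$ and the $gbg^{-1}$-action on $g\Lambda$, the commutator-type elements $h_n \coloneqq b^{-n} g b^n g^{-1}$ approximately fix two well-separated points of $\Lambda$ within $O(\varepsilon)$. By the WPD property, only boundedly many such elements exist, so pigeonhole forces $h_n = h_m$ for some $n \neq m$; thus $[g, b^{n-m}] = 1$, and since $\mathrm{Axis}(b^{n-m}) = \Lambda$, we conclude $g\Lambda = \Lambda$.

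\paragraph{Case 2.}
If $b$ stabilises a standard tree $T = \mathrm{Fix}(c)$, then $\Lambda \subset T$ and $b \in \mathrm{Stab}(T) = C(c) \cong \mathbb{Z} \times F_k$ by Lemma~\ref{lem:standard_tree_stab}. I subdivide according to whether $g$ also stabilises $T$. If $g \notin \mathrm{Stab}(T)$, Lemma~\ref{lem:standard_tree_intersect} says $gT \cap T$ has diameter zero; since $g\Lambda \subset gT$ and both $gT, T$ are convex subcomplexes of the CAT$(-1)$ complex $D_\Gamma$, CAT$(-1)$ divergence of convex subcomplexes (together with the finiteness of isometry types of cells of $D_\Gamma$) bounds $\mathrm{diam}(gT \cap B(T, r)) \supseteq \mathrm{diam}(g\Lambda \cap B(\Lambda, r))$ uniformly. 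If instead $g \in \mathrm{Stab}(T) = C(c)$, then $g\Lambda \subset T$, so by convexity of $T$ in $D_\Gamma$ one has $g\Lambda \cap B(\Lambda, r) = g\Lambda \cap B_T(\Lambda, r)$, and the problem reduces to bounding the overlap of the two geodesic lines $g\Lambda, \Lambda$ in the tree $T$. Writing $\mathrm{Stab}(\Lambda) = \langle c \rangle \times \langle b_0 \rangle \cong \mathbb{Z}^2$, where $b_0$ is the primitive $F_k$-loxodromic with axis $\Lambda$, I claim the tree-overlap $g\Lambda \cap \Lambda$ is bounded in terms of $T$-geometry alone when $g \notin \mathrm{Stab}(\Lambda)$. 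This is proved by combining Lemma~\ref{lem:standard_tree_stab} (pointwise stabilisers of edges of $T$ equal $\langle c \rangle$), the no-inversion property of the $A_\Gamma$-action on $D_\Gamma$, and the parabolic structure of $A_\Gamma$ to show both that $g\Lambda \cap \Lambda$ cannot share a full edge with $\Lambda$ and cannot share an end with $\Lambda$ in $\partial T$.

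\paragraph{Main obstacle.}
I expect the subtlest step to be the last subcase of Case~2: bounding $g\Lambda \cap \Lambda$ when $g \in \mathrm{Stab}(T)$ but $g \notin \mathrm{Stab}(\Lambda)$. This cannot come purely from abstract tree geometry, since the $F_k$-action on $T$ is typically not free (centres of dihedral parabolic subgroups act as elliptic isometries fixing type-$2$ vertices), so one must leverage the parabolic structure of $A_\Gamma$ to show that end-stabilisers on $T$ remain cyclic. In Case~1, the delicate point is the precise transfer of the WPD property from $\widehat{D}_\Gamma$ to $D_\Gamma$ and the concomitant pigeonhole argument with the commutator-type elements $h_n$.
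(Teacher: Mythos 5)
Your Case~1 and the first sub-case of Case~2 are sound and follow the paper's route: the WPD/fellow-travelling/pigeonhole argument for $b$ loxodromic on $\widehat{D}_\Gamma$ (your version pigeonholes the elements $h_n$ to force $[g,b^{n-m}]=1$, whereas the paper reduces $g$ to one of finitely many $g_i$ and bounds the overlap by $\max_i \operatorname{diam}(g_i\Lambda\cap B(\Lambda,r))$ via Lemma~\ref{lem:axis_dichotomy}; both work, modulo the usual orientation choice $b^{\pm n}$), and the ``finitely many shapes'' argument showing that a long $r$-fellow-travelling of $g\Lambda$ with $T$ forces $gT$ and $T$ to share an edge, hence $g\in\mathrm{Stab}(T)$.

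The genuine gap is in the last sub-case of Case~2 ($g\in\mathrm{Stab}(T)$, $g\Lambda\neq\Lambda$), exactly where you flag the main obstacle. The claim that $g\Lambda\cap\Lambda$ ``cannot share a full edge with $\Lambda$'' is false: two distinct axes of conjugate loxodromic isometries of a tree can perfectly well overlap along a segment containing many edges, and nothing in Lemma~\ref{lem:standard_tree_stab}, the no-inversion property, or the parabolic structure prevents this. The weaker statement that $g\Lambda$ and $\Lambda$ cannot share an end of $\partial T$ may be provable, but it only tells you the overlap is a \emph{compact} segment; it gives no bound on its length that is uniform in $g$, so it does not produce the constant $C(r,b)$. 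The missing idea is a translation-length argument: if $g\Lambda\cap\Lambda$ contains a segment of $2|b|+1$ edges with central edge $e$, then $b$ and $gb^{\alpha}g^{-1}$ (for a suitable sign $\alpha\in\{\pm1\}$) are loxodromic isometries of $T$ translating along this common segment with the same translation length, so $b^{-1}gb^{\alpha}g^{-1}$ stabilises $e$; by Lemma~\ref{lem:standard_tree_stab} the stabiliser of an edge of $T$ is the pointwise stabiliser $\langle c\rangle$ of all of $T$, whence $gb^{\alpha}g^{-1}=bh$ with $h$ acting trivially, and $g\Lambda=\mathrm{Axis}(gb^{\alpha}g^{-1})=\mathrm{Axis}(bh)=\mathrm{Axis}(b)=\Lambda$, a contradiction. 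This yields the uniform bound (roughly $2r+2|b|+1$, plus the fellow-travelling constant) that your proposed dichotomy cannot deliver.
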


We split the proof of Proposition~\ref{lem:axis_overlap} into two cases depending on whether $b$ stabilises a standard tree of $D_\Gamma$:

\begin{lem}\label{lem:axis_overlap_1}
	Let $\Lambda$ be the axis of a loxodromic element $b\in A_\Gamma$, and assume that $b$ does not stabilise any standard tree. For every $r>0$, there exists a constant $C\coloneqq C(r, b)$ such that for every $g\in A_\Gamma$, we either have $g\Lambda = \Lambda$ or the diameter of $g\Lambda \cap B(\Lambda, r)$ is at most $C$.
\end{lem}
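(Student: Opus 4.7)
The plan is to prove the contrapositive by combining the WPD-style argument from Case 2 of the proof of Lemma~\ref{lem:axis_dichotomy} with a quantitative fellow-traveling estimate. The first step is to verify that $b$ is a WPD element for the action of $A_\Gamma$ on $D_\Gamma$. Since $b$ acts loxodromically on $D_\Gamma$ (in particular it fixes no vertex) and does not stabilise a standard tree, Lemma~\ref{lem:loxodromic_conedoff} shows that $b$ acts loxodromically on $\widehat{D}_\Gamma$. The action on $\widehat{D}_\Gamma$ is acylindrical by Theorem~\ref{thm:coneoff_acylindrical}, so $b$ is WPD there; since the inclusion $D_\Gamma \hookrightarrow \widehat{D}_\Gamma$ is Lipschitz, $b$ remains WPD for the original action on $D_\Gamma$, exactly as noted in Case 2 of Lemma~\ref{lem:axis_dichotomy}.

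Given $r>0$, I would then fix constants $L$ and $N$ coming from WPD so that, for any subsegment $\sigma \subset \Lambda$ of length at least $L$, there are at most $N$ elements $h \in A_\Gamma$ satisfying $d_{\mathrm{Haus}}(\sigma, h\sigma) \leq 1 + |b|$. Arguing by contradiction, I assume $g\Lambda \neq \Lambda$ and $\mathrm{diam}(g\Lambda \cap B(\Lambda, r)) \geq C$ for some large $C$ to be determined. Applying Lemma~\ref{lem:CAT_fellowtravel} inside the CAT$(-1)$ space $D_\Gamma$, for $C$ sufficiently large (depending only on $r$, $L$, $N$, and $|b|$) one obtains a subsegment $\gamma \subset \Lambda$ that is $1$-close to a subsegment $\gamma' \subset g\Lambda$ and whose length accommodates $N+1$ translates by powers of $b$ of a common subsegment of length $L$.

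The next step mirrors the ping-pong trick from Case 2 of Lemma~\ref{lem:axis_dichotomy}: setting $b' \coloneqq gbg^{-1}$, which acts on $g\Lambda$ by translation of the same length as $b$ on $\Lambda$, I can, for each $i \in \{0,1,\dots, N\}$, pick an integer $\ell_i$ so that $b^i (b')^{\ell_i}$ moves a designated subsegment $\sigma \subset \gamma$ of length $L$ to within Hausdorff distance $1+|b|$ of itself. This is possible because $\gamma$ and $\gamma'$ remain $1$-close after translating by elements of $\langle b \rangle$ and $\langle b' \rangle$ that stay within the fellow-travelling region. By the WPD bound, two of these $N+1$ elements must coincide, yielding $b^i (b')^{\ell_i} = b^j (b')^{\ell_j}$ for some $i \neq j$, and hence a relation $b^{j-i} = (b')^{\ell_i - \ell_j}$. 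Both exponents are non-zero (else $b$ would be torsion, contradicting loxodromicity), so taking axes forces $\Lambda = g\Lambda$, contradicting our assumption.

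The main obstacle will be ensuring that the $\ell_i$'s can be chosen with the claimed uniform Hausdorff-closeness property: the fellow-travelling parallel segments $\gamma \subset \Lambda$ and $\gamma' \subset g\Lambda$ must extend beyond the endpoints of $\sigma$ by at least $N \cdot |b|$ on each side, so that $b^i \sigma$ still lies within the parallel region and can be matched by an appropriate power of $b'$. This combinatorial buffer is precisely what determines the size of the constant $C(r,b)$, and once it is put in place the ping-pong/WPD coincidence argument runs exactly as in Case 2 of Lemma~\ref{lem:axis_dichotomy}.
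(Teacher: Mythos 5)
Your proposal is correct, and it is built from the same two ingredients as the paper's proof (the WPD property of $b$ on $D_\Gamma$, obtained exactly as you describe via Lemma~\ref{lem:loxodromic_conedoff} and Theorem~\ref{thm:coneoff_acylindrical}, together with the CAT$(-1)$ fellow-travelling of Lemma~\ref{lem:CAT_fellowtravel}), but it assembles them differently. The paper does not argue by contradiction: it fixes a reference segment $\gamma\subset\Lambda$, shows that a long overlap of $g\Lambda$ with $B(\Lambda,r)$ forces $b^{-k}gb^{\ell}$ to equal one of the finitely many elements $g_1,\dots,g_n$ supplied by WPD, and then bounds $|g\Lambda\cap B(\Lambda,r)|=|g_i\Lambda\cap B(\Lambda,r)|$ by invoking the \emph{qualitative} statement of Lemma~\ref{lem:axis_dichotomy} for each $g_i$; the final constant is $\max(L,C_1,\dots,C_n)$. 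You instead re-run the pigeonhole argument of Case~2 of Lemma~\ref{lem:axis_dichotomy} directly in the finite-overlap setting, producing $N+1$ elements $b^i(b')^{\ell_i}$ that all nearly stabilise a length-$L$ window and forcing two of them to coincide, whence $b^{j-i}=(b')^{\ell_i-\ell_j}$ and $g\Lambda=\Lambda$. What your route buys is self-containedness and an explicitly quantitative constant that does not depend on knowing the $C_i$ are finite; what the paper's route buys is brevity, since Lemma~\ref{lem:axis_dichotomy} is already available. Two points of bookkeeping you correctly flag but should carry out: the segment $(b')^{\ell_i}\sigma$ is only within distance roughly $2$ (not $1$) of $\Lambda$ after composing the two fellow-travelling estimates, so the WPD constant should be taken a bit larger than $1+|b|$ before extracting $N$; and the buffer of order $N\cdot|b|$ on each side of $\sigma$ is genuinely needed so that the shifted segments stay in the parallel region. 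Neither affects the validity of the argument.
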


\begin{proof}
	We know from Theorems~\ref{thm:coneoff_CAT} and~\ref{thm:coneoff_acylindrical} that $A_\Gamma$ acts acylindrically (and cocompactly) on the CAT$(-1)$  space $\widehat{D}_\Gamma$. Moreover,  an element that does not stabilise a point of $D_\Gamma$ or a standard tree of $D_\Gamma$ acts loxodromically on $\widehat{D}_\Gamma$ by Lemma~\ref{lem:loxodromic_conedoff}. Thus, $b$ acts loxodromically on $\widehat{D}_\Gamma$. Since the action of $A_\Gamma$ on $\widehat{D}_\Gamma$ is acylindrical, it follows in particular that $b$ is a WPD element for this action. Since the inclusion map $D_\Gamma \hookrightarrow \widehat{D}_\Gamma$ is Lipschitz, it is straightforward to check that $b$ is also a WPD element for the action of $A_\Gamma$ on $D_\Gamma$. Let $x_0 \in \Lambda$ be a point of the axis $\Lambda$. For an integer $n \geq 1$, we set define the geodesic segment $\gamma_n \coloneqq [x_0, b^nx_0] \subset \Lambda$. Since $b$ is a WPD element for the action of $A_\Gamma$ on $D_\Gamma$, for every $r \geq 0$ there exists an integer $N(r)\geq 1$ such that there exist only finitely many elements $g \in A_\Gamma$ such that $\gamma_{N(r)}$ and $g\gamma_{N(r)}$ are at Hausdorff distance at most $r$ from one another. Let $N$ be the constant associated with $1+|b|$,
	and let $\gamma \coloneqq \gamma_{N}$ be the corresponding geodesic segment. Let us denote by  $g_1, \ldots, g_n \in A_\Gamma$ the finitely many elements of $A_\Gamma$ such that $\gamma_{N}$ and $g_i\gamma_{N}$ are at Hausdorff distance at most $1+|b|$
	from one another. 
	
	By Lemma~\ref{lem:CAT_fellowtravel}, we can choose a constant $L$, which we can assume greater than $|b|$, such that if two geodesic segments of $D_\Gamma$ stay $r$-close for a distance at least $L$, then they stay $1$-close for a distance at least $3|\gamma|$.
	
	\medskip 
	
	Let $g \in A_\Gamma$ be an element and assume  that the diameter of $g\Lambda \cap B(\Lambda, r)$  is at least~$L$. By construction of $L$, this means in particular that $\Lambda$ and $g\Lambda$ $1$-fellow-travel for a distance greater than $3|\gamma|$. Let $\gamma_1 \subset \Lambda$ and $\gamma_2\subset g\Lambda$ be two sub-segments of length $3|\gamma|$ at Hausdorff distance at most $1$. We can now choose integers $k, \ell \in \Z$ such that $b^k\gamma \subset \gamma_1$ and $gb^\ell \gamma \subset \gamma_2$, and such that 
	$b^k\gamma $ and $gb^\ell \gamma$ are within Hausdorff distance $1+|b|$  from one another. (see Figure~\ref{fig:wpd2}.)
	By construction of the $g_i$'s, there exists an integer $1 \leq i \leq n$ such that  $b^{-k}gb^\ell = g_i$. 
	
		\begin{figure}
		\begin{center}
			\begin{overpic}[width=0.9\textwidth]{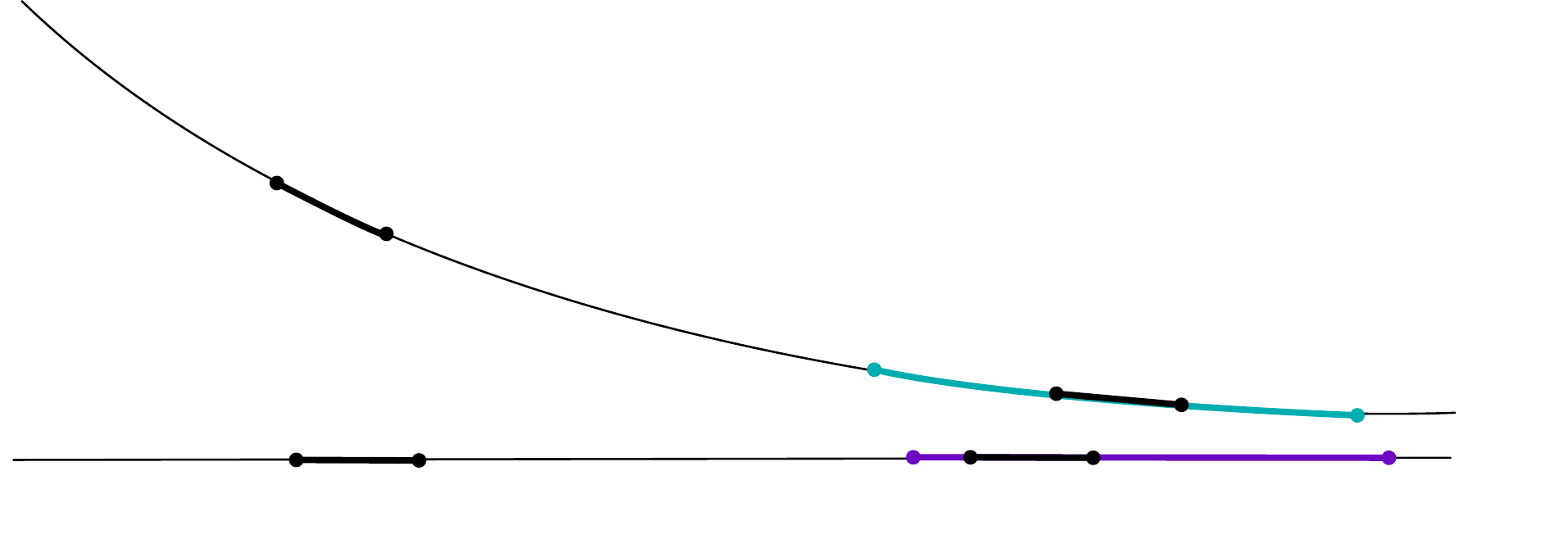}
				\put(5,7){$\Lambda$}
				\put(5,25){$g\Lambda$}
				\put(22,23){$g\gamma$}
				\put(22,2){$\gamma$}
				\put(90,2){$\gamma_1$}
				\put(86,11){$\gamma_2$}
				\put(64,2){$b^k\gamma$}
				\put(70,12){$gb^\ell \gamma$}
			\end{overpic}
			\caption{A portion of the axes $\Lambda$ and  $g\Lambda$. The subsegments $\gamma_1\subset \Lambda$ (in purple) and $ \gamma_2\subset g\Lambda$ (in blue) are at Hausdorff distance at most $1$, and the $\gamma$-translates $b^k\gamma \subset \gamma_1$ and $gb^\ell \gamma\subset \gamma_2$ are at Hausdorff distance at most $1+|b|$.  }\label{fig:wpd2}
		\end{center}
	\end{figure}
	
	For every $1 \leq i \leq n$ such that $g_i\Lambda \neq \Lambda$, we thus  introduce the  constant
	$$C_i\coloneqq \big|g_i\Lambda \cap B(\Lambda, r)\big|,$$
	which is finite by Lemma~\ref{lem:axis_dichotomy}.
	Since $b^{-k}gb^\ell = g_i$ and $b$ acts by translations on $\Lambda$, it follows that $g\Lambda \neq \Lambda$ if and only if $g_i\Lambda \neq \Lambda$. If $g_i\Lambda \neq \Lambda$, we thus get
	$$|g\Lambda \cap B(\Lambda, r)| =  |b^{-k}gb^\ell \Lambda \cap B(\Lambda, r)|  = |g_i\Lambda \cap B(\Lambda, r)|  = C_i$$
	
	It thus follows that $g\Lambda = \Lambda$ or $|g\Lambda \cap B(\Lambda, r)| \leq \mathrm{max}(L, C_1, \ldots, C_n)$.
\end{proof}

\begin{lem}\label{lem:axis_overlap_2}
	Let $\Lambda$ be the axis of a loxodromic element $b\in A_\Gamma$, and assume that $b$ stabilises a standard tree. For every $r>0$, there exists a constant $C\coloneqq C(r, b)$ such that for every $g\in A_\Gamma$, we either have $g\Lambda = \Lambda$ or the diameter of $g\Lambda \cap B(\Lambda, r)$ is at most $C$.
\end{lem}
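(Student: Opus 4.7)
The plan is to split the analysis according to whether $g$ preserves the standard tree $T$ fixed by $b$. Write $T = \Fix(c)$ for $c$ a conjugate of a standard generator, so that the axis $\Lambda$ of $b$ is contained in $T$; Lemma~\ref{lem:standard_tree_stab} then gives $\mathrm{Stab}(T) = C(c) \cong \Z \times F_n$, with $\langle c \rangle$ being both the pointwise kernel of the action on $T$ and the stabiliser of every edge of $T$. In particular the induced action of $F_n = C(c)/\langle c \rangle$ on $T$ has trivial edge stabilisers. Since $T$ is convex in $D_\Gamma$ by Theorem~\ref{thm:Charney_Davis}, distances measured inside $T$ agree with ambient distances in $D_\Gamma$.

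In the case $gT = T$, one has $g \in C(c)$ and $g\Lambda \subset T$. I would show that if $g\Lambda \neq \Lambda$, then $g\Lambda \cap \Lambda$ contains no edge: otherwise, matching a reference edge $e_0 \in \Lambda$ via the translations by $b$ on $\Lambda$ and by $gbg^{-1}$ on $g\Lambda$ yields $b^i e_0 = g b^j e_0$ for some $i, j \in \Z$, forcing $b^{-j} g^{-1} b^i \in \langle c \rangle$ and hence $g \in \langle b, c \rangle$, so $g\Lambda = \Lambda$, a contradiction. Thus $\Lambda$ and $g\Lambda$ are two geodesic lines in the simplicial tree $T$ sharing at most one vertex, and elementary tree geometry gives $\mathrm{diam}(g\Lambda \cap B(\Lambda, r)) \leq 2r$.

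In the case $gT \neq T$, Lemma~\ref{lem:standard_tree_intersect} gives that $T \cap gT$ is either empty or a single type~$2$ vertex, and the inclusion $g\Lambda \cap B(\Lambda, r) \subset gT \cap B(T, r)$ reduces the task to bounding $\mathrm{diam}(gT \cap B(T, r))$. When $T \cap gT = \{v\}$, the edges of $T$ at $v$ and those of $gT$ at $v$ form disjoint vertex sets in $\link(v)$; distinct vertices of $\link(v)$ lie at angular distance at least $\pi/(2m_{st})$ for the piecewise-Euclidean metric of Theorem~\ref{thm:Charney_Davis}, so setting $\theta_0 \coloneqq \pi/(2M)$ with $M = \max_{s,t \in V(\Gamma)} m_{st}$ yields a uniform lower bound on the angle between $T$ and $gT$ at $v$. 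Convexity of $d(\cdot, T)$ along $gT$ together with the first variation formula then gives $d(p, T) \geq \sin(\theta_0)\, d(p, v)$ for every $p \in gT$, so $\mathrm{diam}(gT \cap B(T, r)) \leq 2r/\sin\theta_0$. When $T \cap gT = \emptyset$, Lemma~\ref{lem:angle_pi_over_2} furnishes right angles at the two endpoints of the bridging geodesic, and CAT$(-1)$ comparison with a Lambert quadrilateral in $\bbH^2$ likewise yields a uniform bound on $\mathrm{diam}(gT \cap B(T, r))$.

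The main technical step throughout is the angular divergence argument for $T \cap gT = \{v\}$: the uniform positivity of $\theta_0$ ultimately rests on the finiteness of the values of $m_{st}$ arising from the finite presentation graph $\Gamma$, after which the first variation estimate is routine. The disjoint subcase is subtler in that the bound must be uniform in $g$, which (as in the analogous step of Lemma~\ref{lem:axis_overlap_1}) one handles by appealing to the cocompactness of $A_\Gamma$ on $D_\Gamma$ and the fact that distinct standard subcomplexes separated by positive distance cannot have arbitrarily close translates without collapsing onto a common standard tree.
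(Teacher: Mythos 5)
Your reduction of the case $gT\neq T$ to bounding $\mathrm{diam}(gT\cap B(T,r))$ via angles at the intersection vertex (or a bridging geodesic) is a reasonable, if somewhat sketchy, route; the paper avoids it entirely by showing directly that a sufficiently long $r$-fellow-travelling of $\Lambda$ and $g\Lambda$ forces $g\Lambda$ and $\Lambda$ to share an edge of $T$, hence forces $g$ to stabilise $T$. The real problem is in your case $gT=T$.

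There you claim that if $g$ stabilises $T$ and $g\Lambda\neq\Lambda$, then $g\Lambda\cap\Lambda$ contains no edge, whence the bound $2r$. This is false, and the argument offered for it does not go through. If $e$ is an edge of $\Lambda\cap g\Lambda$, you can write $e=b^ie_k=gb^je_{k'}$ where $e_1,\dots,e_m$ runs over the edges of a fundamental domain for $\langle b\rangle$ acting on $\Lambda$; but there is no reason that $k=k'$, so you cannot conclude $b^{-j}g^{-1}b^i\in\St(e_0)=\langle c\rangle$. And indeed two distinct translates of an axis in a simplicial tree can share a long segment even when the action has trivial edge stabilisers: in $F_2=\langle x,y\rangle$ acting on its Cayley tree, with $u=x^ny$ and $g=x$, the axes $\Axis(u)$ and $g\Axis(u)$ are distinct but share the $(n-1)$-edge segment from $x$ to $x^n$. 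So the overlap of $\Lambda$ with a distinct translate $g\Lambda$ inside $T$ can be as long as roughly twice the translation length $|b|$, and your claimed bound $2r$ must be replaced by something like $2r+2|b|+1$. The missing idea is the argument the paper uses: if $\Lambda\cap g\Lambda$ contains at least $2|b|+1$ edges, pick the central edge $e$ of such a segment; since $b$ and $gb^{\pm1}g^{-1}$ are hyperbolic isometries of $T$ with the same translation length and axes $\Lambda$, $g\Lambda$, one can choose the sign so that $b$ and $gb^{\alpha}g^{-1}$ send $e$ to the same edge, whence $h:=b^{-1}gb^{\alpha}g^{-1}$ lies in $\St(e)=\langle c\rangle$, which acts trivially on $T$; then $g\Lambda=\Axis(gb^{\alpha}g^{-1})=\Axis(bh)=\Axis(b)=\Lambda$. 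With that correction (and a more careful justification of the uniform bounds in your $gT\neq T$ case, where the appeal to ``cocompactness'' in the disjoint subcase is currently hand-waving), your outline can be completed.
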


\begin{proof} We start by proving the following intermediate result:
	
	\bigskip
	
	\textbf{Claim.} For every $r \geq 0$ there exists a constant $C(r)$ such that if $\Lambda$ and $g\Lambda$ stay $r$-close on a distance at least  $C(r)$, then $g$ stabilises the tree $T$. 
	
	\bigskip
	
	Since $D_\Gamma$ has finitely many isometry types of simplices, there exists a constant $\varepsilon$, which depends only on the space $D_\Gamma$, such that the only edges of $D_\Gamma$ contained in the $\varepsilon$-neighbourhood  of $T$ are precisely the edges of $T$. Moreover, let $\alpha$ denote the smallest length of an edge of $D_\Gamma$. 
	
	By Lemma~\ref{lem:CAT_fellowtravel}, we choose a constant $\ell \coloneqq \ell(r, \varepsilon, \alpha)$ such that if two geodesics of $D_\Gamma$ stay $r$-close on a distance at least $\ell$, then they stay $\varepsilon$-close on a distance at least~$3\alpha$.
	
	Suppose that $\Lambda$ and $g\Lambda$ stay $r$-close on a distance at least $\ell$. By construction of $\ell$ and $\varepsilon$ we have that $g\Lambda$ and $\Lambda$ coincide on a sub-path of length at least $3\alpha$, hence $g\Lambda$ and $\Lambda$  share  an edge, by construction of $\alpha$. It follows that $g$ sends some edge of $T$ to some edge of $T$, and  Lemma~\ref{lem:standard_tree_stab} inow implies that $g$ stabilises $T$, proving the Claim.
	
	\bigskip
	
	Let us now assume that $\Lambda$ and $g\Lambda$ stay $r$-close on a distance at least  $C(r)$, as defined in the Claim above. From the above Claim, we have that $g$ acts by isometries on the tree $T$, and it particular it sends the line $\Lambda \subset T$ to the line $g\Lambda = \mathrm{Axis}(gbg^{-1})$.  Since $\Lambda$ and $g\Lambda$ stay $r$-close in $D_\Gamma$ and $T$ is a convex subcomplex of $D_\Gamma$, it follows that $\Lambda$ and $g\Lambda$ stay $r$-close in $T$. 
	Without loss of generality, we can further assume that $C(r)> 2r+2|b|+1$. From the above, it follows that  $\Lambda\cap g\Lambda$ contains a geodesic segment $\gamma$ that is the concatenation of exactly $2|b|+1$ edges. Let us denote by $e$ the central edge of $\gamma$. Then, since $b$ and $gb^{\pm 1}g^{-1}$ are hyperbolic isometries of $T$ with axes $\Lambda$ and $g\Lambda$ respectively and identical translation length $|b|$, we can choose $\alpha \in \{-1, 1\}$ such that $b$ and $gb^{\alpha}g^{-1}$ send $e$ to the same edge of $\gamma$. Thus, the element $h \coloneqq b^{-1}(gb^{\alpha}g^{-1})$ belongs to $\mathrm{Stab}(e)$, hence it acts trivially on $T$ by Lemma~\ref{lem:standard_tree_stab}. In particular, we have that $\mathrm{Axis}(bh) = \Lambda =  \mathrm{Axis}(b)$. Since we have $gb^{\alpha}g^{-1} = bh$,  it follows that 
	$$g\Lambda  = \mathrm{Axis}(gb^{\alpha}g^{-1}) = \mathrm{Axis}(bh) = \Lambda .$$
	Thus, we get $g\Lambda = \Lambda$ or the diameter of $g\Lambda \cap B(\Lambda, r)$ is at most $C(r)$.
\end{proof}

\begin{proof}[Proof of Proposition~\ref{lem:axis_overlap}] This now follows from Lemmas~\ref{lem:axis_overlap_1} and~\ref{lem:axis_overlap_2}.
\end{proof}

We are now ready to prove Lemma~\ref{lem:elliptic_loxodromic}:

\begin{proof}[Proof of Lemma~\ref{lem:elliptic_loxodromic}]
	First assume that the axis $\Lambda$ of $b$ is contained in $\Fix(a)$. In particular, the element $a$ must be tree-elliptic. Since $b$ sends an edge of $\Lambda \subset \Fix(a)$ to an edge of $\Lambda \subset \Fix(a)$, it follows from Lemma~\ref{lem:standard_tree_stab} that $b \in C(a)$. Thus, $a$ and $b$ commute, and we are done.
	
	\bigskip
	
	Let us now consider the case where the axis the axis $\Lambda$ of $b$ is not contained in $\Fix(a)$.  Let us check that   $a^k \Lambda \neq \Lambda$ for all integers $k \neq 0$. Suppose by contradiction that $a^k \Lambda = \Lambda$ for some non-zero integer $k$. Then $a^k$ fixes some point $x \in D_\Gamma$ as $a^k$ acts elliptically on the CAT$(-1)$ space $D_\Gamma$, and it also stabilises the geodesic line $\Lambda$ by assumption. In particular, it follows that $a^k$ fixes pointwise the unique geodesic  from $x$ to $\Lambda$. Thus, $a^k$ stabilises the line $\Lambda$ and fixes a point in it. It now follows that $a^{2k}$ fixes pointwise $\Lambda$, hence $\Lambda \subset \Fix(a^{2k}) = \Fix(a)$, the latter equality following from Lemma~\ref{lem:standard_tree_power}. This was excluded by assumption. 
	
	Thus,  $a^k \Lambda \neq \Lambda$ for all integers $k \neq 0$, and it follows from Lemma~\ref{lem:axis_overlap} that for every $r \geq 0$ there exists a constant $C(r)$ such that for every non-zero integer $k$, the diameter of $a^k\Lambda \cap B(\Lambda, r)$ is at most $C(r)$.  	The result now follows from Lemma~\ref{lem:free_subgroup_mixed}.
\end{proof}

\begin{proof}[Proof of Theorem~\ref{thm:power_subgroup}] This now follows from Lemmas~\ref{lem:power_elliptic}, \ref{lem:power_loxodromic},  and~\ref{lem:elliptic_loxodromic}.
	\end{proof}

\begin{bibdiv}
\begin{biblist}

\bib{Baudisch}{article}{
title={Subgroups of semifree groups},
author={Baudisch, A.},
journal={Acta Mathematica Hungarica},
volume={38},
number={1-4},
pages={19--28},
year={1981}}

\bib{BestvinaArtin}{article}{
	title={Non-positively curved aspects of {A}rtin groups of finite type},
	author={Bestvina, M.},
	journal={Geom. \& Topol.},
	volume={3},
	number={1},
	pages={269--302},
	year={1999}}	

\bib{BestvinaQuestions}{article}{
	title={Questions in Geometric Group Theory},
	author={Bestvina, M.},
   eprint={https://www.math.utah.edu/~bestvina/eprints/questions-updated.pdf},
	year={2004}}

\bib{BridsonSemisimple}{article}{
	title={On the semisimplicity of polyhedral isometries},
	author={Bridson, M.},
	journal={Proceedings of the American mathematical society},
	volume={127},
	number={7},
	pages={2143--2146},
	year={1999}
}

\bib{BH}{book}{
	title={Metric spaces of non-positive curvature},
	author={Bridson, M. R.},
	author= {Haefliger, A.}
	volume={319},
	year={2013},
	publisher={Springer Science \& Business Media}
}

\bib{CS}{article}{
	author={Caprace, P.-E.},
	author={Sageev, M.},
	title={Rank rigidity for CAT (0) cube complexes},
	journal={GAFA},
	volume={21},
	number={4},
	pages={851--891},
	year={2011}}

\bib{CD}{article}{
   author={Charney, R.},
   author={Davis, M. W.},
   title={The $K(\pi,1)$-problem for hyperplane complements associated to
   infinite reflection groups},
   journal={J. Amer. Math. Soc.},
   volume={8},
   date={1995},
   number={3},
   pages={597--627}}

\bib{CW}{article}{
    AUTHOR = {Cohen, A. M.},
    AUTHOR = {Wales, D. B.},
     TITLE = {Linearity of {A}rtin groups of finite type},
   JOURNAL = {Israel J. Math.},
  FJOURNAL = {Israel Journal of Mathematics},
    VOLUME = {131},
      YEAR = {2002},
     PAGES = {101--123}}

\bib{Crisp}{article}{
       title={Automorphisms and abstract commensurators of 2--dimensional Artin groups},
       author={Crisp, J.},
       journal={Geometry \& Topology},
       volume={9},
       number={3},
       pages={1381--1441},
       year={2005}}

\bib{Digne}{article}{
       title={On the linearity of Artin braid groups},
       author={Digne, F.},
       journal={Journal of Algebra},
       volume={268},
       number={1},
       pages={39--57},
       year={2003}}

\bib{TitsHHG}{article}{
	title={Boundaries and automorphisms of hierarchically hyperbolic spaces},
	author={Durham, M},
	author={Hagen, M. },
	author={Sisto, A.},
	journal={Geometry \& Topology},
volume={21},
number={6},
pages={3659--3758},
year={2017}}

\bib{TitsHHGcorr}{article}{
	title={Correction to the article Boundaries and automorphisms of hierarchically hyperbolic spaces},
	author={Durham, M},
	author={Hagen, M. },
	author={Sisto, A.},
	journal={Geometry \& Topology},
	volume={24},
	number={2},
	pages={1051--1073},
	year={2020}}

\bib{HaettelXXL}{article}{
   author={Haettel, T.},
   title={Extra-large Artin groups are $\mathrm{CAT}(0)$ and acylindrically hyperbolic},
      JOURNAL = {Ann. Inst. Fourier},
    FJOURNAL = {Annales de l'Institut Fourier},
   eprint={arXiv:1905.11032},
   date={2022, in press}}

 \bib{HMSArtin}{article}{
    AUTHOR = {Hagen, M.},
    author = {Martin, A.},
    author = {Sisto, A.},
   title={Extra-large type Artin groups are hierarchically hyperbolic},
 journal={Math. Annal.},
 year={2022},
doi={10.1007/s00208-022-02523-4}
}

\bib{HMtrees}{article}{
	title={Groups acting on trees and Wise's power alternative},
	author={Hagen, M.},
	author={Martin, A.},
	journal={in preparation},
	year={2023}}	

\bib{HaglundWise}{article}{
   	title={Coxeter groups are virtually special},
   author={Haglund, F},
   author={Wise, D. T.},
   journal={Advances in Mathematics},
   volume={224},
   number={5},
   pages={1890--1903},
   year={2010}}

   \bib{HO1}{article}{
    AUTHOR = {Huang, J.},
    author = {Osajda, D.}
   title={Large-type Artin groups are systolic},
   journal={Proceedings of the London Mathematical Society},
   volume={120},
   number={1},
   pages={95--123},
   year={2020}}

     \bib{HO2}{article}{
    AUTHOR = {Huang, J.},
    author = {Osajda, D.}
   title={Helly meets Garside and Artin},
   journal={Inventiones mathematicae},
   volume={225},
   number={2},
   pages={395--426},
   year={2021}}

\bib{KapovichBook}{book}{
	title={Hyperbolic manifolds and discrete groups},
	author={Kapovich, M.},
	volume={183},
	year={2001},
	publisher={Springer Science \& Business Media}}

\bib{Koberda}{article}{
	title={Right-angled Artin groups and a generalized isomorphism problem for finitely generated subgroups of mapping class groups},
	author={Koberda, T.},
	journal={Geometric and Functional Analysis},
	volume={22},
	number={6},
	pages={1541--1590},
	year={2012}}

\bib{LearyMinasyan}{article}{
  	title={Commensurating HNN extensions: nonpositive curvature and biautomaticity},
  author={Leary, I. J.},
  author = {Minasyan, A.},
  journal={Geometry \& Topology},
  volume={25},
  number={4},
  pages={1819--1860},
  year={2021}}

\bib{vdL}{article}{
   author={van der Lek, H.},
   title={The homotopy type of complex hyperplane complements},
   status={Ph.D. thesis},
   date={1983},
   journal={University of Nijmegan}}

\bib{MairesseMatheus}{article}{
	AUTHOR = {Mairesse, J.}
	author = {Math\'eus, F.},
	TITLE = {Growth series for {A}rtin groups of dihedral type},
	JOURNAL = {Int.
		J. Algebra Comput.},
	VOLUME = {16},
	YEAR = {2006},
	NUMBER = {6},
	PAGES = {1087-1107},
}

   \bib{MP2}{article}{
    AUTHOR = {Martin, A.},
    AUTHOR = {Przytycki, P.},
     TITLE = {Acylindrical actions for two-dimensional Artin groups of hyperbolic type},
  journal={International Mathematics Research Notices},
  volume={2022},
  number={17},
  pages={13099--13127},
  year={2022}}

   \bib{MP1}{article}{
	AUTHOR = {Martin, A.},
	AUTHOR = {Przytycki, P.},
	TITLE = {Tits Alternative for Artin groups of type FC},
	journal={Journal of Group Theory},
	volume={23},
	number={4},
	pages={563--573},
	year={2020}}

\bib{NOP}{article}{
   author={Norin, S.},
   author={Osajda, D.},
  	author={Przytycki, P.},
  		title={Torsion groups do not act on 2-dimensional CAT (0) complexes},
  	journal={Duke Mathematical Journal},
  	volume={171},
  	number={6},
  	pages={1379--1415},
  	year={2022},
  	publisher={Duke University Press}}
\bib{OP}{article}{
    AUTHOR = {Osajda, D.},
    AUTHOR = {Przytycki, P.},
     TITLE = {Tits Alternative for 2-dimensional CAT(0) complexes},
   eprint = {arXiv:2110.01845},
   YEAR = {2021}}

\bib{SW}{article}{
    AUTHOR = {Sageev, M.},
    AUTHOR = {Wise, D. T.},
     TITLE = {The {T}its alternative for {${\rm CAT}(0)$} cubical complexes},
   JOURNAL = {Bull. London Math. Soc.},
  FJOURNAL = {The Bulletin of the London Mathematical Society},
    VOLUME = {37},
      YEAR = {2005},
    NUMBER = {5},
     PAGES = {706--710}}

 \bib{TitsArtin}{article}{
 	  title={Normalisateurs de tores I. Groupes de Coxeter {\'e}tendus},
 	author={Tits, J.},
 	journal={Journal of algebra},
 	volume={4},
 	number={1},
 	pages={96--116},
 	year={1966}}

\bib{TitsAlternative}{article}{
    AUTHOR = {Tits, J.},
     TITLE = {Free subgroups in linear groups},
   JOURNAL = {J. Algebra},
  FJOURNAL = {Journal of Algebra},
    VOLUME = {20},
      YEAR = {1972},
     PAGES = {250--270}}
\bib{VaskouAH}{article}{
	title={Acylindrical hyperbolicity for Artin groups of dimension 2},
	author={Vaskou, N.},
	journal={Geom. Dedicata},
	volume={216},
	number={1},
	pages={1--28},
	year={2022}}
\end{biblist}
\end{bibdiv}

\vspace{0.5cm}

\bigskip\noindent
\textbf{Alexandre Martin},

\noindent Address: Department of Mathematics and the Maxwell Institute for the Mathematical Sciences, Heriot-Watt University, Edinburgh EH14 4AS, UK.

\noindent Email: \texttt{alexandre.martin@hw.ac.uk}

\end{document}